\tikzset{external/optimize=false}
\newtheorem{theorem}{Theorem}[section]
\newtheorem{corollary}[theorem]{Corollary}
\newtheorem{lemma}[theorem]{Lemma}
\newtheorem{proposition}[theorem]{Proposition}
\theoremstyle{definition}
\newtheorem{definition}[theorem]{Definition}
\newtheorem{example}[theorem]{Example}
\theoremstyle{remark}
\newtheorem{remark}[theorem]{Remark}
\newtheorem*{claim}{Claim}
\newcommand{%
    \import{./images/}{.pdf_tex}
}[1]{%
    \import{./images/}{#1.pdf_tex}
}
\title{Minimal rational graphs admitting a \QHD smoothing}
\author{M\'arton Beke}
\address{Alfréd Rényi Institute of Mathematics, Budapest\\ University of Technology and Economics, Budapest, Hungary}
\email{bekem@renyi.hu}
\def\Z{\mathbb{Z}}
\def\Q{\mathbb{Q}}
\def\QHD{{$\mathbb{Q}\text{HD}$ }}
\def\ABC{\mathcal A,\mathcal B,\mathcal C}
\def\ABCt{\mathcal A^3,\mathcal B^3,\mathcal C^3}
\def\ABCf{\mathcal A^4,\mathcal B^4,\mathcal C^4}
\DeclareMathOperator{\rk}{{rk}}
\begin{document}

\begin{abstract}
    Using the picture deformation technique of De Jong-Van Straten we show that no singularity whose resolution graph has 3 or 4 large nodes, i.e., nodes satisfying $d(v)+e(v)\leq -2$, has a \QHD smoothing.
    This is achieved by providing a general reduction algorithm for graphs with \QHD smoothings, and enumeration.
    New examples and families are presented, which admit a combinatorial \QHD smoothing, i.e. the incidence relations for a sandwich presentation can be satisfied.
    We also give a new proof of the Bhupal-Stipsicz theorem on the classification of  weighted homogeneous singularities admitting \QHD smoothings with this method by using cusp singularities.
\end{abstract}
\maketitle

\section{Introduction}
The study of complex surface singularities is a central area of interest in algebraic geometry. A key problem is understanding the various ways a singularity can be deformed into a smooth surface through a process called smoothing. This paper investigates a specific type of deformation known as a rational homology disk (\QHD) smoothing. These are the topologically simplest kinds of deformations, giving us hope of a complete classification. The conjecture is attributed to Wahl, who had an unpublished list of singularities admitting a \QHD smoothing, stating that this list is complete.
This paper presents further evidence for  Wahl's Conjecture by showing the following:

\begin{theorem}\label{thm:fo}
	No complex surface singularity whose resolution graph has at most three large nodes of any degree, or at most four nodes of degree 3, and no further nodes can admit a \QHD smoothing.
\end{theorem}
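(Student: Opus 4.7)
The plan is to recast the existence of a \QHD smoothing as a purely combinatorial problem via the picture deformation theory of De Jong--Van Straten, in which one must exhibit a family of plane curve germs (a ``picture'') whose incidence pattern encodes a sandwich presentation of the singularity. Concretely, a \QHD smoothing forces a system of numerical/incidence constraints on the resolution graph $\Gamma$: the multiplicities of branches at each node, the intersection multiplicities between pairs of branches, and the self-intersection/discrepancy data at each vertex must all fit together consistently with the sandwiched structure. My first step is to assemble these constraints in a form that depends only on the local data at each node $v$ (in particular on $d(v)+e(v)$) and on the chains of $(-2)$-type vertices connecting the nodes.

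The second step is to prove a reduction lemma: given any resolution graph $\Gamma$ admitting a combinatorial \QHD smoothing, one can perform local moves -- contracting a chain between two nodes, absorbing a small node into a neighbour, or trimming a leaf string -- so that the resulting graph $\Gamma'$ still admits a \QHD smoothing, has no more large nodes than $\Gamma$, and is strictly ``smaller'' in some well-founded complexity. The key invariant to track is $(\#\text{large nodes},\ \#\text{nodes of degree 3},\ \text{length of chains})$ in lexicographic order, so that iteration terminates. The monotonicity of the large-node count under the reduction is crucial, because it means a counterexample to Theorem~\ref{thm:fo} would reduce to a \emph{minimal} counterexample with the same bounds on large and degree-three nodes.

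The third step is enumeration: under the constraints ``at most three large nodes'' or ``at most four nodes of degree $3$ and no further nodes'', the list of graphs that are minimal with respect to the reduction is finite and can be written down explicitly. For each candidate graph I would then attempt to solve the incidence system arising from the picture deformation. The expected outcome is a contradiction in every case, either from a direct arithmetic incompatibility among the branch multiplicities, or from a rank obstruction coming from the self-intersection vector, or from a positivity/discrepancy inequality violated at one of the large nodes.

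The main obstacle, I expect, is the reduction lemma itself: devising a set of local moves that is rich enough to force every graph into the short enumerated list, while narrow enough that each move provably preserves the existence of a \QHD smoothing. In particular, one must argue carefully that contracting a chain between two nodes does not destroy the sandwiched structure, which requires delicate tracking of how branch multiplicities transform under the move. Once the reduction is in place, the case-by-case ruling-out is essentially bookkeeping with the incidence equations, aided by the observation that large nodes impose disproportionately rigid constraints so that a handful of them already overdetermines the system.
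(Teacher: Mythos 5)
Your high-level strategy coincides with the paper's: translate a \QHD smoothing into a combinatorial incidence problem via de Jong--van Straten picture deformations, reduce to a finite list of graphs by local moves, and rule out the list case by case. However, there are two genuine gaps. First, the reduction lemma, which you yourself flag as the main obstacle, is only postulated. The paper's mechanism is quite specific: one first invokes the Stipsicz--Szab\'o--Wahl classification to restrict to the $\mathcal A,\mathcal B,\mathcal C$ families (minimal rational, so a sandwich presentation with smooth branches exists), then locates a \emph{reducing triple} $v\leq w\leq z$, uses $\mu=0$ (hence no free points) to guarantee a second curve through the distinguished point $Q$, proves via a chain of lemmas that the vertices whose curves contain $Q$ are separated from the rest by a single edge, and then contracts that edge while deleting $Q$ and $C_w$. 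Crucially, the quantity controlling termination and finiteness is not a lexicographic complexity but the invariant $\delta(\Gamma)=(\#\text{curves})-|\Gamma|$, which is \emph{constant} under the reduction and equals $3-s$ or $2-s$ for the relevant families; combined with the fact that the number of components of $\Gamma\setminus\{\text{nodes}\}$ cannot increase, this yields the explicit bound $|\gamma|\leq 7s-2$ (resp.\ $7s+1$) on reduced graphs. Without pinning down an invariant of this kind, your finite enumeration is not justified.

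Second, and more decisively, the obstruction that actually kills the enumerated candidates is not among the three you propose. For one or two degree-$3$ nodes a non-square determinant suffices, but for three or four nodes reduced graphs with square determinant do occur, and the paper has to prove a new necessary condition (Theorem~\ref{thm:zksq}): any graph admitting a combinatorial \QHD smoothing satisfies $Z_K^2+|\Gamma|=0$. This is established by comparing the incidence matrix $J$ of the combinatorial smoothing with the incidence matrix $I$ of the Scott deformation inside the same negative definite Euclidean lattice, observing that $J^{-1}I$ has orthonormal rows, and applying Witt cancellation to compute the square of the projection of the canonical vector $(1,\dots,1)$ onto $\ker(I)$. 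Neither a ``direct arithmetic incompatibility,'' a ``rank obstruction,'' nor a ``positivity/discrepancy inequality at a large node'' substitutes for this; without it the case-by-case check does not close, so this step is a missing idea rather than mere bookkeeping.
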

\noindent Moreover, we give a new proof of the classification theorem for weighted homogeneous graphs.
\begin{theorem}[Bhupal-Stipsicz 2010 \cite{bhupal2011weighted}]
	The star-shaped graphs admitting a \QHD smoothing are precisely the graphs depicted on Figure~\ref{fig:WMN} and Figures~\ref{fig:C6}-\ref{fig:4-star}.
\end{theorem}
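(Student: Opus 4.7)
The plan is to combine the general reduction algorithm for \QHD smoothings developed earlier in the paper with an auxiliary cusp singularity, so that the cyclic rigidity of a cusp resolution is used to tame the otherwise very flexible starshaped case. Let $\Gamma$ be a starshaped resolution graph with central vertex $v_0$ of self-intersection $-b_0$ and $n$ arms encoding Seifert invariants $(\alpha_1,\beta_1),\dots,(\alpha_n,\beta_n)$. If $n\leq 2$ then $\Gamma$ is a chain and the underlying singularity is a cyclic quotient, a case covered by Wahl's original $p^2/(pq-1)$ construction, so I would assume $n\geq 3$.

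First I would associate to $\Gamma$ a cusp singularity whose resolution graph is a cycle obtained by inserting the Hirzebruch--Jung dual strings of the arms of $\Gamma$ and closing them through a modification at $v_0$. A \QHD smoothing of $\Gamma$, through the De Jong--Van Straten picture deformation, lifts to a sandwich realization of this cusp, and the resulting incidence matrix encodes the Seifert data as a periodic, circulant-like condition. The point of passing to the cusp is that the cyclic structure of its resolution constrains the incidence relations of the sandwich far more tightly than the starshaped structure of $\Gamma$ does directly, which is also why cusp singularities are singled out in the statement as the tool.

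Next I would feed this sandwich into the general reduction algorithm. The resulting integer inequalities, combined with Laufer's rationality criterion and negative-definiteness of the intersection form on $\Gamma$, should force $n\in\{3,4\}$ and bound both the lengths of the arms and the multiplicities $\alpha_i$, leaving only a finite list of candidate $(b_0;(\alpha_i,\beta_i))$. Enumerating what remains then matches the solutions, case by case, with the graphs of Figure~\ref{fig:WMN}, producing precisely the families $\mathcal{W}$, $\mathcal{M}$, $\mathcal{N}$ for $n=3$ and the sporadic four-star and $C_6$ examples of Figures~\ref{fig:C6}--\ref{fig:4-star} for $n=4$.

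The main obstacle I anticipate is the sharpness of the numerical constraints produced by the reduction step: the inequalities have to be tight enough that the admissible combinatorial \QHD solutions \emph{coincide} with the Bhupal--Stipsicz list, rather than producing extra ``near-miss'' configurations that would then need to be excluded by hand—as the abstract notes, combinatorial \QHD admissibility can in general be strictly weaker than genuine \QHD admissibility. The cusp reformulation is what should make this sharpness achievable, because rigidity statements for cusp picture deformations (essentially, that the dual cusp controls the smoothing) can be pulled back to the starshaped side and promoted to the exact characterization needed.
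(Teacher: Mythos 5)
There is a genuine gap, and it starts with a misreading of the word ``cusp.'' In the paper's argument (Section~\ref{sect:bpst}) the cusps are ordinary cusp singularities of the \emph{plane curve branches} in the sandwich presentation: because the central node of a star in $\ABC$ has framing at most $-2$, the curvettas $C_i^j$ on the long arm have multiplicity sequence $(2,1,\dots,1)$, hence each passes through exactly one point $X(C_i^j)$ with multiplicity $2$. The whole proof is a direct combinatorial analysis of where these double points can sit (Lemma~\ref{cusplemma}, Corollaries~\ref{Cnlemma} and~\ref{doublelemma}) together with point/curve counting forced by $\mu=0$. There is no auxiliary \emph{surface} cusp singularity with cyclic resolution graph, no dual string insertion, and no circulant incidence condition anywhere in the argument; your central construction is not something the paper carries out, and you give no mechanism by which a \QHD smoothing of a star would ``lift to a sandwich realization'' of such a cycle graph, nor why the resulting constraints would be sharp.

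Two further problems would sink the plan even granting the cusp reformulation. First, the finiteness does not come from ``reduction algorithm inequalities plus Laufer rationality plus negative definiteness'': rationality and definiteness alone do not bound the number of arms or the Seifert data. The paper gets finiteness for free by invoking the Stipsicz--Szab\'o--Wahl classification of symplectic plumbing trees and Theorem~\ref{thm:existence}, which reduces the problem at the outset to the star-shaped members of $\ABC$; your proposal omits this essential input. Second, the reduction algorithm of Section~\ref{sect:theorem} is built for minimal rational graphs whose sandwich presentation has \emph{smooth} branches, and the paper explicitly notes that the graphs of Section~\ref{sect:bpst} are not of this type, so you cannot ``feed this sandwich into the general reduction algorithm'' as stated. (A smaller point: your sorting of the output is also off --- Figures~\ref{fig:C6}--\ref{fig:A3} are all three-armed and only Figure~\ref{fig:4-star} is four-armed, while $\mathcal M$ in Figure~\ref{fig:WMN} already has a valency-$4$ node.)
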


Besides the algebro-geometric perspective, these spaces are also important in low-dimensional topology motivated by the rational blowdown operation of Fintushel-Stern \cite{fintushel1996knots}, which uses \QHD fillings of linear graphs and a cut-and-paste operation to generate topologically smaller exotic 4-manifolds. In a way this problem also tries to classify all possible rational blowdowns of 4-manifolds.

Our result can also be interpreted as a generalization of an observation by Bhupal--Stipsicz (\cite[Theorem 2.5, Remark 2.6]{bhupal2013smoothings}): a star-shaped graph, which admits a \QHD smoothing cannot have a too negative node, which is the same as the  large  nodes discussed here ($e(v)+d(v)\leq -2$).

To achieve these results, we use the deformation theory of sandwiched singularities developed by de Jong and van Straten in \cite{de1998deformation}. This reduces the question of finding deformations of surface singularities to deformations of curves with some intersections prescribed by the surface singularity. We will be mostly dealing with the combinatorial properties of these intersection patterns.
Using Donaldson's diagonalizability theorem strong necessary conditions were imposed on graphs admitting a \QHD smoothing by Stipsicz-Szabó-Wahl \cite{stipsicz2008rational}, in particular the study of graphs with \QHD smoothings is essentially reduced to the  families $\mathcal G,\mathcal W,\mathcal N,\mathcal M,\mathcal A,\mathcal B,\mathcal C$, a fact we will rely on in what follows.
The proof of Theorem~\ref{thm:fo} is as follows:
under the assumption of a graph having large nodes and a \QHD smoothing, we find a reduction algorithm (\ref{subsec:algo}), so if a graph admits (the combinatorics of a) \QHD smoothing, then some smaller graph does so too. This, and information about the general shape  of our graphs reduces the problem to a finite number of graphs (Lemma~\ref{lem:finite}). We then show (Theorem~\ref{thm:zksq}) that any graph that admits a combinatorial solution to its \QHD smoothing problem satisfies $Z_K^2+n=0$, where $n$ is the number of vertices of the graph, and $Z_K$ is the anticanonical cycle. Using this statement, we can search through all possible graphs to finish the proof of the theorem.

The proof of the Bhupal-Stipsicz theorem is a case analysis of possible solutions to the \QHD smoothing problem of graphs in $\ABC$ (since the other cases have been settled already, see \cite[Section 8]{stipsicz2008rational}). This argument is notably simpler and shorter than the original proof, which utilizes the combinatorics of symplectic curves and McDuff's theorem.
Accordingly the result is weaker, since we only rule out the $\ABC$ graphs with one star from having a \QHD smoothing, whereas \cite{bhupal2011weighted} rules them out from having a symplectic filling.
This gap is filled by new results of Plamenevskaya-Starkston.
In \cite{plamenevskaya2023unexpected} they expand the de Jong-van Straten theory into the symplectic case, if the corresponding curve singularity has only smooth branches, and in \cite{plamenevskaya2025sandwiched} to the general case. This means, that if the intersection pattern for a \QHD smoothing is impossible then the graph cannot admit a symplectic \QHD filling.

The paper is organized as follows: In Section 2, we review details on the $\ABC$ families, and the deformation theory for sandwiched singularities.
Section 3 gives specific introduction to minimal rational graphs, and describes and proves the reduction algorithm.
Section 4 shows some necessary conditions for a graph to admit a combinatorial \QHD smoothing.
Section 5 is dedicated to examples of graphs, which do admit combinatorial \QHD smoothings, and Section 6 consists of an introduction to the case when the curves can have cusp singularities, and the proof of the Bhupal-Stipsicz theorem.

\subsection{Notation}
A number $k$ on the edge of a graph $\Gamma$ means a path consisting of $k$ vertices framed $-2$. In some cases, $k$ is allowed to be $-1$; in this case, we identify the two endpoints of the edge with $k$ and frame it $e+f+2$, where $e,f$ are the framings of the two endpoints of the edge with $k$ over it.
\begin{figure}[h!]
	\centering
	\includegraphics{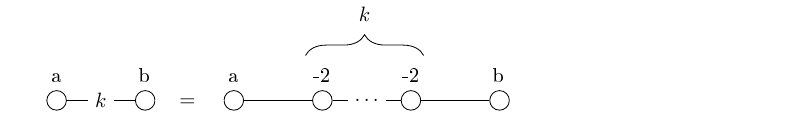}\hspace{-2cm}
	\includegraphics{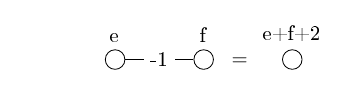}
	%        \hspace{-5cm}
	%        \begin{tikzpicture}
		%            \tikz \graph[edge quotes={fill=white},nodes={draw, circle, empty nodes}, grow right=1.5cm]
		%{
			%a["a"] --["$k$"] b["b"] --[white] c["a"]-- d["-2"]  --["$\dots$"]f["-2"] -- g["b"]
			%};
		%\draw (-5.5,0.2) node {$=$};
		%\draw[decorate, decoration={brace,amplitude=10pt}] (-3.5,1) --  (-1.5,1);
		%\draw (-2.5,1.7) node {$k$};
		%\end{tikzpicture}\vspace{1cm}
		%
		%\begin{tikzpicture}\hspace{-1cm}
		%            \tikz \graph[edge quotes={fill=white},nodes={draw, circle, empty nodes}, grow right=1.5cm]
		%{
			%a["e"] --["-1"] b["f"] --[white] d["e+f+2"]
			%};
		%\draw (-1.5,0.2) node {$=$};
		%\end{tikzpicture}%\vspace{-1cm}
		%\caption{The graphs of $\mathcal W$ and $\mathcal N$, where $p,q,r\geq -1$.}
		\label{fig:notation}
	\end{figure}
	\subsection{Acknowledgments}
	The author is thankful to András Némethi, and András Stipsicz for their continued help and support with this project. The author is thankful for enlightening conversations to Marco Golla, László Koltai and Olga Plamenevskaya.
	
	The project supported by the Doctoral Excellence Fellowship Programme (DCEP)  is funded by the National Research Development and Innovation Fund of the Ministry of Culture and Innovation and the Budapest University of Technology and Economics and by ERC Advanced Grant KnotSurf4d.

	\section{Preliminaries}
	\subsection{Graphs Admitting a \QHD smoothing}
	%We need the main results from \cite{stipsicz2008rational}.
	In \cite{stipsicz2008rational} Stipsicz--Szabó--Wahl derive strong restrictions on the resolution graph of a singularity if it admits a \QHD smoothing.
	\begin{definition}
		A resolution graph $\Gamma$ on $n$ vertices is called a \textit{symplectic plumbing tree} if its intersection lattice $Q_\Gamma$ admits an embedding into the negative definite trivial lattice $(\mathbb Z\langle e_1,\dots,e_n\rangle,n\langle-1\rangle)$ such that $K=\sum e_i$ represents the anticanonical cycle of $Q_\Gamma$.
	\end{definition}
	\begin{theorem}[{\cite[Corollary 2.5]{stipsicz2008rational}}]\label{thm:necessary}
		If a complex surface singularity admits a \QHD smoothing, its resolution graph is a symplectic plumbing tree.
	\end{theorem}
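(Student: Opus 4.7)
The plan is to apply Donaldson's diagonalisation theorem to a closed smooth four-manifold assembled from the resolution of the singularity and the orientation-reversed Milnor fiber of the hypothetical \QHD smoothing. Concretely, let $\tilde X$ be a good resolution, $F$ the Milnor fiber, and $L = \partial \tilde X = \partial F$ the link, and form the closed oriented four-manifold $Z := \tilde X \cup_L (-F)$.

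I would first verify that $Z$ is negative definite of rank $n$. Since $F$ is a rational homology disk, $H_1(F;\mathbb{Q}) = H_2(F;\mathbb{Q}) = 0$, so Mayer--Vietoris yields $H_2(Z;\mathbb{Q}) \cong H_2(\tilde X;\mathbb{Q})$ and in particular $b_2(Z) = n$. By Novikov additivity, $\sigma(Z) = \sigma(\tilde X) + \sigma(-F) = -n + 0 = -n$, which together with $b_2(Z) = n$ forces the intersection form $Q_Z$ to be negative definite. Donaldson's theorem then produces an isometry $Q_Z \cong (\mathbb{Z}^n, n\langle -1\rangle)$, and restricting along the injection $H_2(\tilde X) \hookrightarrow H_2(Z)$ delivers the required lattice embedding $Q_\Gamma \hookrightarrow n\langle -1\rangle$.

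The remaining and most subtle step is to show that the diagonal vector $\sum e_i$ is precisely the image of the anticanonical cycle $Z_K$. For this I would use the complex geometry of the smoothing: the canonical class $K_{\tilde X}$ of the resolution and the canonical class of the total space of the smoothing glue together across $L$ into a characteristic cohomology class $\kappa \in H^2(Z;\mathbb{Z})$ whose restriction to $H_2(\tilde X;\mathbb{Q})$ is Poincar\'e dual to $-Z_K$. The \QHD hypothesis forces the Milnor number of $F$ to vanish, and via a Laufer-type formula this pins the square of $\kappa$ down to $\kappa^2 = -n$. Since every characteristic vector $v \in n\langle -1\rangle$ satisfies $v^2 \leq -n$ with equality precisely when $v = \sum \pm e_i$, a sign-changing lattice automorphism converts $\kappa$ into $\sum e_i$, proving the theorem.

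The hardest part is this last step: assembling the canonical classes of $\tilde X$ and $F$ into a single integral characteristic class on the non-complex manifold $Z$ and then computing $\kappa^2$ exactly requires both a careful cut-and-paste of complex-geometric data across $L$ and a genuine use of the \QHD condition through the vanishing of $\mu$. The Mayer--Vietoris and Donaldson portions, by contrast, are a direct application once $Z$ has been built.
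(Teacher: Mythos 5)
The paper does not prove this statement at all---it is imported verbatim from Stipsicz--Szab\'o--Wahl \cite{stipsicz2008rational}---and your proposal is an accurate reconstruction of their argument: close up $Z=\tilde X\cup_L(-F)$, deduce that $Q_Z$ is negative definite of rank $n$ from Mayer--Vietoris and Novikov additivity, apply Donaldson, and identify the glued characteristic class with $\sum e_i$ using the bound $v^2\leq -n$ for characteristic vectors of $n\langle-1\rangle$. The one imprecision is in your final step: the identity $\kappa^2=-n$ does not come from Laufer's formula but from the fact that $c_1^2-2\chi-3\sigma$ of an (almost) complex filling depends only on the contact boundary data, which for a \QHD Milnor fiber ($\chi(F)=1$, $\sigma(F)=0$, and $c_1(F)$ torsion since $H^2(F;\Q)=0$) forces $K_{\tilde X}^2=-n$ and hence $\kappa^2=K_{\tilde X}^2-c_1(F)^2=-n$.
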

	The minimal resolution graphs with this property are completely classified; they belong to one of 7 classes.
	The first, $\mathcal{G}$, consists of linear graphs, with framings given by the negatives of the Hirzebruch-Jung continued fraction coefficients of $\frac{p^2}{pq-1}$ for $p>q>0$ coprime integers. The next three $\mathcal{W, N, M}$ are depicted in Figure~\ref{fig:WMN}.
	
	\begin{figure}[h!]

		%        \centering
		\vspace{-1cm}
		\includegraphics{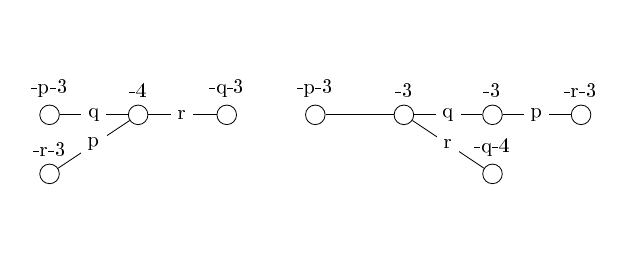}
		%        \begin{tikzpicture}
			%            \tikz \graph[edge quotes={fill=white},nodes={draw, circle, empty nodes}, grow right=1.5cm]
			%{
				%{a["-p-3" <"q"], b["-r-3" <"p"]} -- c["-4"] --["r"] d["-q-3"] --[white] 
				%av["-p-3"] -- bv["-3"] --{cv["-3" >"q" ]--["p"]dv["-r-3"], ev["-q-4" >"r"]}
				%};
			%\end{tikzpicture}
			%\vspace{-2cm}
			\includegraphics{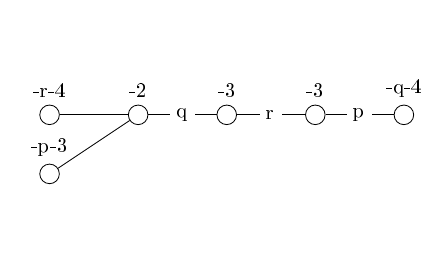}
			%
			%        \begin{tikzpicture}
				%            \tikz \graph[edge quotes={fill=white},nodes={draw, circle, empty nodes}, grow right=1.5cm]
				%{
					%{a["-r-4"], b["-p-3"]} --c["-2"]--["q"] d["-3"]--["r"] e["-3"] --["p"] f["-q-4"]
					%};
				%\end{tikzpicture}
				%\vspace{-1cm}
				\caption{The graphs of $\mathcal W,\mathcal N$ and $\mathcal M$ where $p,q,r\geq -1$.}
				\label{fig:WMN}
			\end{figure}
			
			The remaining $\mathcal{A, B, C}$ are defined by repeated blowups of an edge next to the unique $-1$ vertex or the $-1$ vertex itself, beginning from the graph of Figure~\ref{fig:ABC}. After the blowups, the framing of the $-1$ vertex (which is always unique) is changed to $-4,-3,-2$ to obtain an element of the $\mathcal{A, B, C}$ classes, respectively.
			\begin{figure}[h!]
				\includegraphics{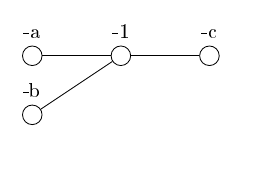}
				%        \centering\vspace{2cm}\hspace{-2cm}
				%        \begin{tikzpicture}
					%            \tikz \graph[edge quotes={fill=white},nodes={draw, circle, empty nodes}, grow right=1.5cm]
					%{
						%{a["-a"],b["-b"]}--c["-1"]--d["-c"]
						%};
					%\end{tikzpicture}
					\vspace{-1cm}
					\caption{The starting graph for the construction of the $\ABC$ families. $(a,b,c)=(3,3,3),(2,4,4),(2,3,6)$ respectively.}
					\label{fig:ABC}
				\end{figure}
				\begin{theorem}[{\cite[Theorem 1.8]{stipsicz2008rational}}]
					The set of minimal symplectic plumbing trees $\mathcal S$ equals $\mathcal G\cup\mathcal W\cup\mathcal N\cup\mathcal M\cup\mathcal A\cup\mathcal B\cup\mathcal C$.
				\end{theorem}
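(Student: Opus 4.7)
The plan is to prove both inclusions of the equality separately. For the inclusion $\mathcal G\cup\mathcal W\cup\mathcal N\cup\mathcal M\cup\mathcal A\cup\mathcal B\cup\mathcal C\subseteq\mathcal S$, I would exhibit, for each graph in the seven families, an explicit embedding $Q_\Gamma\hookrightarrow(\mathbb Z^n,n\langle-1\rangle)$ sending the anticanonical class to $K=\sum e_i$. For the linear family $\mathcal G$, whose framings are the Hirzebruch--Jung coefficients of $p^2/(pq-1)$, an induction on length (essentially reading the Riemenschneider staircase) produces such an embedding. For $\mathcal W,\mathcal N,\mathcal M$, I would embed each of the three arms as linear sub-patterns and glue them at the central node with appropriate sign conventions chosen to meet the adjunction condition at the center. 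For $\ABC$, one propagates an embedding through the iterated blowups defining the family starting from the graph of Figure~\ref{fig:ABC}: each blowup splits a coordinate $e_i$ as $e_i-e_{n+1}$ at the blown-up edge and appends $e_{n+1}$ as the new $-1$ exceptional vertex, which is then reframed to $-4$, $-3$ or $-2$ as dictated by the family.

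For the reverse containment, let $\Gamma$ be a minimal graph in $\mathcal S$ with embedding $v\mapsto\sum_i\epsilon_i^v e_i$. The identities $v^2=e(v)$, the adjunction $K\cdot v=-e(v)-2$ forced by $K=\sum e_i$, and $v\cdot w\in\{0,1\}$ for pairs of vertices translate to
\[
\sum_i(\epsilon_i^v)^2=-e(v),\qquad \sum_i\epsilon_i^v=-(e(v)+2),\qquad \sum_i\epsilon_i^v\epsilon_i^w\in\{0,1\}.
\]
Integrality then rigidly constrains the coordinate vector of each $v$; for instance a $-2$ vertex must use exactly two nonzero coordinates, both $\pm1$ of opposite sign, and a $-3$ vertex has essentially only two possible shapes. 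Minimality of $\Gamma$ also forbids any $-1$ vertex that is contractible in the usual sense, which trims the candidate pool before any case analysis begins.

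The main obstacle is the subsequent graph-theoretic case analysis, which I would stratify by the number of branching (degree $\geq 3$) vertices of $\Gamma$. With no branching $\Gamma$ is linear, and the combination of continued-fraction arithmetic with the integer constraints above forces membership in $\mathcal G$. With one branching vertex, the adjunction identity at the center and the already classified arm constraints reduce the possibilities to $\mathcal W,\mathcal N,\mathcal M$, together with the three single-star base graphs of Figure~\ref{fig:ABC} underlying $\ABC$. With further branchings generated by iterated blowups, the rigidity of the embedding is expected to match the $\ABC$ blowup pattern step by step: any deviation forces an unavoidable coordinate conflict with $K=\sum e_i$, producing a contradiction. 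Pushing this stratified analysis to completion, while tracking how Donaldson-style diagonalisation interacts with the prescribed anticanonical vector, is the substantive part of the argument and the natural place to expect difficulty.
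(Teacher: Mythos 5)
This statement is not proved in the paper at all: it is quoted verbatim from \cite{stipsicz2008rational} (Theorem 1.8 there) and used as a black box, so there is no internal proof to compare against. Judged on its own terms, your proposal correctly identifies the general shape of the original Stipsicz--Szab\'o--Wahl argument --- translate $v^2=e(v)$, adjunction with $K=\sum e_i$, and $v\cdot w\in\{0,1\}$ into Diophantine constraints on the coordinate vectors, then classify --- but it is a plan rather than a proof. The entire substantive content of the reverse inclusion is deferred: phrases such as ``is expected to match the $\mathcal A,\mathcal B,\mathcal C$ blowup pattern step by step'' and ``the natural place to expect difficulty'' mark exactly the point where the real work begins, namely showing that the rigidity of the embedding forces every minimal symplectic plumbing tree into one of the seven families. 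That case analysis occupies most of the source paper and cannot be waved through; nothing you have written rules out, say, a tree with two branch vertices that is not obtained by the $\mathcal A,\mathcal B,\mathcal C$ blowup recipe. This is a genuine gap, not a stylistic one.

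Two smaller points. First, in the forward inclusion your treatment of $\mathcal A,\mathcal B,\mathcal C$ is incomplete: propagating an embedding through the blowups produces an embedding of the graph in which the exceptional vertex still has framing $-1$ and coordinate vector $e_{n+1}$; after the framing is changed to $-4$, $-3$ or $-2$ the vertex must satisfy $\sum_i(\epsilon_i)^2=-e(v)$ and the adjunction constraint with more coordinates, so the embedding has to be modified, not merely truncated. Second, your count for a $-3$ vertex is off: $\sum\epsilon_i^2=3$ forces three entries equal to $\pm1$, and the adjunction sum then pins down the signs, leaving essentially one shape up to permutation, not two. Neither of these is fatal to the strategy, but both would need to be repaired before the easy direction is actually established.
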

				This is only a necessary condition; for the first four classes, a converse of Theorem~\ref{thm:necessary} also holds.
				\begin{theorem}[{\cite[Theorem 1.10]{stipsicz2008rational}}]\label{thm:existence}
					For every element  $\Gamma\in\mathcal G\cup\mathcal W\cup\mathcal N\cup\mathcal M$, there is a complex surface singularity with resolution graph $\Gamma$ admitting a \QHD smoothing.
				\end{theorem}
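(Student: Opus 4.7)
The plan is to realise each graph $\Gamma$ in the four families as the resolution graph of a sandwiched singularity and then to exhibit an explicit \emph{picture deformation} of the associated decorated curve. By the de Jong--van Straten correspondence recalled in Section~2, every such picture deformation produces a smoothing of the singularity, with the number of irreducible components of the generic curve controlling the second Betti number of the Milnor fibre. A \QHD smoothing corresponds to the minimal-component case, so the task is to exhibit, for each $\Gamma$, a picture deformation with the smallest admissible number of components.

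I would first handle the cyclic quotient family $\mathcal G$. These are the singularities of type $\tfrac{1}{p^2}(1,pq-1)$, whose \QHD smoothings go back to Wahl and, topologically, to Casson--Harer and Fintushel--Stern. In the sandwich language the existence reduces to a one-parameter deformation of $p$ smooth branches meeting with contact of order $q$ into $p$ disjoint smooth branches that meet a fixed auxiliary curve transversally; this is an elementary deformation of plane curves and gives a single irreducible component per Wahl arm, which is exactly the count required by the \QHD condition.

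For $\mathcal W$, $\mathcal N$ and $\mathcal M$ I would proceed case by case. Each such graph has a distinguished $-1$ vertex, and after a suitable picture blow-up the decorated curve decomposes into three arms, each combinatorially of the same type as a $\mathcal G$-sandwich governed by the parameters $p,q,r$. The strategy is to apply the $\mathcal G$ construction separately along each arm and then to glue the results into a single plane curve, checking that the three local picture deformations can be chosen with compatible tangency and infinitely-near-point data at the central vertex. The main obstacle is exactly this simultaneous-matching problem: producing a globally algebraic plane curve realising all three local pictures at once. I would address it by a dimension count on the space of decorated plane curves of the prescribed total degree and multiplicity profile, showing that the locus realising the three local arms is non-empty, and then pass to a smooth generic member. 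Finally $\mu = 0$ for the resulting smoothing can be read off from the number of irreducible components of the generic fibre, confirming that the Milnor fibre is a rational homology disk.
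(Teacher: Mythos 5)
This statement is not proved in the paper at all: it is quoted verbatim as \cite[Theorem 1.10]{stipsicz2008rational}, where the smoothings are produced by explicit analytic constructions (Wahl's cyclic quotient smoothings for $\mathcal G$, and quotient constructions of smoothings of larger singularities for $\mathcal W,\mathcal N,\mathcal M$), not by picture deformations. So your route is genuinely different from the source, and in principle it is an attractive one --- indeed Section~5 of the paper exhibits exactly the incidence patterns you would need ($t(a,b,c)$ with appropriate signs recovers $\mathcal W$ and $\mathcal N$). But as written your argument has a gap precisely at the step the paper is careful to flag: what you construct is a \emph{combinatorial} \QHD smoothing, i.e.\ a solution to the incidence relations of Definition~\ref{def:smoothsmoothing}, and this is only a necessary condition. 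To invoke Theorem~\ref{thm:picdef} you must produce an actual $\delta$-constant one-parameter deformation of the decorated plane curve whose generic fibre realises that incidence pattern. Your proposed fix --- ``a dimension count on the space of decorated plane curves \dots showing that the locus realising the three local arms is non-empty'' --- does not work: a dimension count gives an expected dimension, not non-emptiness, and realisability of a prescribed incidence scheme of plane curves is exactly the kind of problem (of Pappus/Fano type) where expected-dimension reasoning fails. This is why the paper's abstract and Section~5 claim only combinatorial smoothings for the new families, and why the existence half of the classification still rests on the explicit constructions of Stipsicz--Szab\'o--Wahl.

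Two smaller inaccuracies: the \QHD condition is not the ``minimal-component case.'' In a picture deformation the number of curves is fixed by the sandwich presentation (one per curvetta), and Corollary~\ref{muzero} gives $\mu(F)=\#\{\text{points}\}-\#\{\text{curves}\}$; what must be minimised is the number of distinct marked points, i.e.\ the curves must be made to overlap as much as the intersection numbers permit. You should also verify at the outset that every graph in $\mathcal W\cup\mathcal N\cup\mathcal M$ is actually a sandwich graph admitting a presentation with smooth branches, since the whole de Jong--van Straten framework presupposes this.
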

				The case of the $\ABC$ classes is only partially resolved, but there is a complete classification for weighted homogeneous singularities in \cite[Theorems 1.4, 1.6]{bhupal2011weighted}, which will be proven in Section~\ref{sect:bpst}.
				
				\subsection{Deformations of sandwiched singularities}
				\begin{definition}
					A resolution graph $\Gamma$ is called a \textit{sandwich graph} if it arises as a subgraph of a graph that can be blown down to the empty graph.
					A singularity is called \textit{sandwiched} if its resolution graph is a sandwich graph.
				\end{definition}
				The deformation theory of sandwiched singularities 
				essentially reduces to deformations of plane curves, as will be explained momentarily.
				We rely on the picture deformation method developed in \cite{de1998deformation}.
				
				Let $\rho:(\tilde A,E)\to(\mathbb C^2,0)$ be obtained by point blowups starting from $(\mathbb C^2,0)$ and the singularity $X$ be obtained by contracting the non-$(-1)$ curves from $\tilde A$.
				Choose one transverse \textit{curvetta} $\tilde C_i$ on each $-1$ curve of $\tilde A$.
				Now $\tilde A$ can be seen as a good embedded resolution of $\rho(\tilde C)$.
				
				Reversing the blowup process, De Jong and Van Straten define:
				\begin{definition}[{\cite[Definitions 1.3, 4.1]{de1998deformation}}]
					A \textit{decorated curve} is a pair $(C,l)$, where $C=\cup_I C_i\subset(\mathbb C^2,0)$ is a curve with $|I|$ irreducible components and a function $l:I\to\mathbb N$ assigning to each irreducible component a number. 
					Let $m(i)$ denote the sum of the multiplicities of the $i$th branch in the minimal resolution of $C$. Then 
					\begin{equation}\label{eq:multiplicity}
						m(i)\leq l(i)
					\end{equation} for each $i\in I$.
					For discussing deformations, it is useful to encode the decoration $l$ as a divisor on the normalization $\tilde C$ of $C$.
				\end{definition}
				From a decorated curve, we obtain a surface singularity as follows:
				\begin{definition}[{\cite[Definition 1.4]{de1998deformation}}]
					Let $(C,l)$ be a decorated curve.
					The modification $\tilde A(C,l)\to(\mathbb C^2,0)$ determined by $(C,l)$ is obtained from the minimal resolution of $C$ by $l(i)-m(i)$ further blowups at the $i$th branch of $C$.
					The analytic set $X(C,l)$ is obtained by contracting the exceptional curves not intersecting any branch of $C$ in $\tilde A(C,l)$.
				\end{definition}
				\begin{remark}
					If during the blowup process, we don't reach the minimal \textit{good} resolution of $C$, the analytic set can have multiple singular points.
					
					Note that the same surface singularity may be presented in multiple ways from different decorated curves.
				\end{remark}
				Next, they define the corresponding notion of deformation of sandwich singularities from the decorated curve data.
				\begin{definition}[{\cite[Definition 4.2]{de1998deformation}}]
					A one-parameter deformation of a decorated curve $(C,l)$ is a $\delta$-constant deformation $C_\Delta\to \Delta$ of $C$ (where $\Delta:=(\mathbb C,0)$) and a flat deformation $l_\Delta\subset \tilde C\times \Delta$ (where $\tilde C$ is the normalization of $C$) under the multiplicity condition of Equation~\ref{eq:multiplicity}.
					
					The multiplicity condition means a divisor on each fiber $l_t=l_\Delta\cap C_t=\sum_{p\in C_t}m_pp$ subject to $\sum_{p\in (C_t)_i}m_p=\sum_{p\in (C_0)_i}m_p=l(i)$, where $(C)_i$ denotes the $i$th irreducible component of the curve $C$.
				\end{definition}
				The important theorem for us is the following:
				\begin{theorem}[{\cite[Theorem 4.4]{de1998deformation}}]\label{thm:picdef}
					For every one-parameter deformation of $(C,l)$, there exists a one-parameter deformation of $X(C,l)$, and every one-parameter deformation of $X(C,l)$ appears in this form!
				\end{theorem}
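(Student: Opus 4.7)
The plan is to establish the claimed bijection by following the geometric construction of $X(C,l)$ from $(C,l)$ in one-parameter families, and then inverting this construction using the sandwich structure. In both directions the argument is essentially a ``deformation in families'' translation of the static construction $\tilde A(C,l)\to (\mathbb C^2,0)$ combined with the contraction of the non-$(-1)$ curves.

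For the forward direction, start with a $\delta$-constant deformation $C_\Delta\to\Delta$ together with a flat family of divisors $l_\Delta\subset\tilde C\times\Delta$ satisfying the multiplicity condition. The $\delta$-constancy produces a simultaneous normalization, and in fact a simultaneous embedded resolution inside $\mathbb C^2\times\Delta$, since the number of infinitely near points required to resolve $C_t$ is constant in $t$. The extra $l(i)-m(i)$ blowups can then be performed along the flat family of centers prescribed by $l_\Delta$; this is precisely why the decoration was packaged as a divisor on the normalization. Finally, the non-$(-1)$ components of the exceptional divisor stay negative definite in every fiber, so a simultaneous contraction exists by Grauert's criterion in families, producing the sought deformation of $X(C,l)$.

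For the reverse direction, I would exploit the fact that the sandwich resolution $\tilde A=\tilde A(C,l)$ fits into a diagram between $X(C,l)$ and $(\mathbb C^2,0)$. Given a one-parameter deformation $X_\Delta\to\Delta$, one first constructs a simultaneous resolution $\tilde A_\Delta\to\Delta$; because the non-$(-1)$ curves have been contracted to isolated singularities, their dual graph (negative definite, rigid in the plumbing sense) persists in nearby fibers, and the blowdown morphism $\tilde A\to(\mathbb C^2,0)$ also extends to a family $\tilde A_\Delta\to \mathbb C^2\times\Delta$. Pushing forward the strict transforms of the original curvettas gives a flat family $C_\Delta\subset\mathbb C^2\times\Delta$, and tracking the infinitely near points at which the extra blowups occur produces the flat divisor $l_\Delta$. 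One then checks that $\delta(C_t)$ is constant and that the multiplicity condition is preserved by counting blowups in each fiber.

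The main obstacle is the reverse direction, and specifically the existence of a simultaneous resolution of $X_\Delta$ that is compatible with the blowdown to $\mathbb C^2\times\Delta$. Without the sandwich hypothesis there is no reason for a deformation of $X(C,l)$ to remain ``of the same birational type''; the key point is that the morphism to $(\mathbb C^2,0)$, which is rigid, forces the combinatorics of the resolution to propagate through the family. At the infinitesimal level this amounts to an isomorphism between $T^1_{X(C,l)}$ and the tangent space to the deformation functor of the pair $(C,l)$ subject to the multiplicity constraint, and verifying that the geometric construction above induces this isomorphism is the technical heart of the proof. Once this identification is in place, standard smoothness and base-change arguments promote the tangent-level statement to the claimed bijection of one-parameter deformations.
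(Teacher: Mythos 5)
This statement is not proved in the paper at all: it is quoted verbatim as \cite[Theorem 4.4]{de1998deformation} and used as a black box, so there is no internal proof to compare against. Judged on its own terms, your sketch contains two steps that genuinely fail. In the forward direction you claim that $\delta$-constancy of $C_\Delta\to\Delta$ yields a simultaneous embedded resolution in $\mathbb C^2\times\Delta$ because ``the number of infinitely near points required to resolve $C_t$ is constant in $t$.'' That is false, and in fact contradicts the whole point of the construction: in a picture deformation the special fiber may be a highly degenerate curve singularity while the generic fiber has only ordinary $m$-tuple points, so the embedded topology (and the infinitely near point tree) jumps. $\delta$-constancy controls the total delta invariant, i.e.\ guarantees a simultaneous normalization after base change, not a simultaneous embedded resolution. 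The actual construction in de Jong--van Straten does not resolve in families; it associates to $(C,l)$ a complete ideal $I(C,l)$ and realizes $X(C,l)$ from its blowup, so that a flat family $(C_\Delta,l_\Delta)$ produces a flat family of ideals and hence a family of surfaces, with flatness being the point to verify.

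The reverse direction is worse: you propose to ``first construct a simultaneous resolution $\tilde A_\Delta\to\Delta$'' of an arbitrary one-parameter deformation of $X(C,l)$. Such a simultaneous resolution does not exist in general --- a smoothing of $X(C,l)$ (the case this paper actually cares about) has smooth generic fiber and singular special fiber, so no resolution can persist through the family, and the dual graph certainly does not ``propagate.'' The surjectivity statement is the hard content of the theorem and is proved in the reference by deformation-functor methods: one compares the tangent space of the functor of deformations of the decorated curve (equivalently, of the sandwiched structure $X\to(\mathbb C^2,0)$) with $T^1_{X(C,l)}$, and shows the forgetful map is smooth, using the rationality of sandwiched singularities; versality then promotes this to all one-parameter deformations. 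Your last paragraph correctly identifies that a $T^1$ comparison is the technical heart, but the geometric mechanism you offer to realize it (rigidity of the resolution graph in the family) is not available, so the sketch does not close.
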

				Mostly we are interested in smoothing components. It is folklore that for rational singularities (which sandwich singularities are, see \cite{spivakovsky1990sandwiched}) every component in the base space of the semi-universal deformation space of $X$ (denoted $\mathcal B(X)$) is a smoothing component. That is, smoothing occurs over a generic point of this component.
				A one-parameter deformation is always induced from a well-defined component of $\mathcal B(X)$, and the generic fiber can be described by a particularly simple special case of the previous theorem.
				\begin{definition}[{\cite[Definition 4.6]{de1998deformation}}]
					A one parameter deformation $(C_\Delta,l_\Delta)$ is called a \textit{picture deformation}, if for generic $t\neq 0$ the divisor $l_t$ is reduced.
				\end{definition}
				\begin{remark}
					It follows that the generic fiber of the $\delta$-constant deformation $C_t$ only has transverse $m$-tuple points as singularities.
					Following \cite{de1998deformation} we call $1$-tuple points "\textit{free}".
					
					The general fiber of a picture deformation in general consists of a collection of curves, with the $m$-tuple points marked for $m>1$, and possibly some further marked points on each component, so that in total, each component $C_i$ contains $l(i)$ marked points. Since in the following we will only be working with the intersection patterns of these curves, we will identify curves with their marked points, so that they are finite (multi-)sets.
				\end{remark}
				We mention one particularly useful construction, which produces a smoothing over the \textit{Artin component} of $\mathcal B(X)$, called the \textit{Scott deformation} of $X$.
				\begin{theorem}[{See \cite[Proposition 1.10]{de1998deformation}, \cite[Proposition 4.1]{plamenevskaya2023unexpected}}]\label{thm:scott}
					Let $(C,o)$ be an isolated curve singularity of multiplicity $m$. There is a 1-parameter deformation, the general fiber of which only has singularities of the following two types:
					\begin{itemize}
						\item singularities of $C$ after blowing up $o$
						\item an $m$-tuple point.
					\end{itemize}
				\end{theorem}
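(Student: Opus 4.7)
The plan is to realize the deformation on the blowup $\tilde X := \mathrm{Bl}_o(\mathbb{C}^2)$ with exceptional divisor $E \cong \mathbb{CP}^1$, and then contract $E$ to recover a deformation of $(C,o)$. Writing $\pi:\tilde X\to\mathbb{C}^2$ for the blowup and $\tilde C$ for the strict transform of $C$, the multiplicity hypothesis gives $\tilde C\cdot E=m$, so $\tilde C$ meets $E$ at finitely many points $p_1,\dots,p_k$ with local intersection multiplicities summing to $m$. Away from $E$, $\tilde C$ carries precisely the singularities of $C$ after blowing up $o$.

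First I would construct a 1-parameter family $\tilde C_t\subset\tilde X$ with $\tilde C_0=\tilde C$ such that for $t\neq 0$ one has: (i) $\tilde C_t$ is equisingular with $\tilde C$ away from $E$, preserving every singularity of the strict transform; and (ii) $\tilde C_t$ meets $E$ transversally in $m$ distinct simple points. Locally at each $p_i$, with coordinates $(u,v)$ so that $E=\{u=0\}$, the germ $(\tilde C,p_i)$ is cut out by some $g_i(u,v)$ whose restriction $g_i(0,v)$ vanishes to order $m_i=\tilde C\cdot_{p_i}E$. A $\delta$-constant perturbation of $g_i$ can both split $g_i(0,v)$ into $m_i$ simple roots and slide any singular point of $\tilde C$ sitting at $p_i$ to a nearby position off $E$, while keeping its analytic type unchanged. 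These local deformations then need to be glued to a global family $\tilde C_t$ on a neighborhood of $\tilde C$ in $\tilde X$.

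Next I would push $\tilde C_t$ forward by $\pi$. Since $E$ is a $(-1)$-curve and, for $t\neq 0$, the set $\tilde C_t\cap E$ consists of $m$ distinct simple transverse points with pairwise distinct tangent directions on $E\cong\mathbb{CP}^1$, the image $C_t:=\pi(\tilde C_t)$ is a flat 1-parameter family with $C_0=C$. Its only singularities are (a) at the images of $\mathrm{Sing}(\tilde C_t)\setminus E$, each analytically isomorphic to the corresponding singularity of $\tilde C$, i.e.\ a singularity of $C$ after blowing up $o$; and (b) at $o=\pi(E)$, where the $m$ smooth branches of $\tilde C_t$ meeting $E$ transversally at the distinct points descend to $m$ smooth branches through $o$ with pairwise distinct tangent directions, giving a transverse $m$-tuple point. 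Conservation of the $\delta$-invariant $\binom{m}{2}+\sum_{p}\delta(\tilde C,p)$ under the family serves as an immediate sanity check.

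The main obstacle is the local-to-global step in the second paragraph: one must show that the independent local perturbations chosen at the various $p_i$ can be realized simultaneously by a single flat family $\tilde C_t$ on $\tilde X$. This reduces to a vanishing statement in the appropriate normal-sheaf cohomology of $\tilde C\subset\tilde X$ and can be arranged by working on a sufficiently small Stein neighborhood of $\tilde C$ containing $E$, on which $H^1$ of the relevant deformation sheaf vanishes so that the obstruction (which is supported only at the finitely many $p_i$) is trivial. Alternatively, and perhaps more concretely, the whole family can be produced in one stroke by writing an explicit unfolding of the defining equation of $C$ in coordinates adapted to the blowup in the spirit of \cite{de1998deformation}, which bypasses the cohomological gluing entirely.
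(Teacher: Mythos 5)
The paper offers no proof of this statement at all: it is quoted verbatim from \cite[Proposition 1.10]{de1998deformation} and \cite[Proposition 4.1]{plamenevskaya2023unexpected}, so there is no internal argument to compare against. Your blow-up/separate/blow-down construction is precisely the one in those references (most explicitly in Plamenevskaya--Starkston), and it is correct in outline; I only flag two soft spots. First, the assertion carrying the real content --- that at each $p_i$ one can simultaneously preserve the analytic type of $(\tilde C,p_i)$ and split the order-$m_i$ contact with $E$ into $m_i$ distinct transverse points --- is stated without justification; the one-line fix is to translate the multi-germ $(\tilde C,p_i)$ by a generic small vector $s w_i$: translation is analytically trivial, and for generic $w_i$ the translate meets the fixed smooth curve $E$ in $m_i$ simple transverse points, because the discriminant of the projection of the reduced germ $\tilde C$ onto a coordinate transverse to $E$ is not identically zero. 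Second, the local-to-global obstruction you worry about at the end is vacuous in this setting: $(C,o)$ is a germ, so $\tilde C$ is a multi-germ at the finitely many points $p_1,\dots,p_k$ and there is nothing to glue. Equivalently, your ``alternative'' is in fact the construction of \cite{de1998deformation}: the local translations assemble into an explicit deformation of the parametrization, where a branch $t\mapsto(x(t),y(t))$ with $\operatorname{ord}x=m_j$ in generic coordinates becomes $t\mapsto\bigl(x(t)+sa_j,\ (y(t)/x(t)+sb_j)(x(t)+sa_j)\bigr)$, using the same $(a_j,b_j)$ for all branches whose strict transforms share a point of $E$. This family is automatically flat and $\delta$-constant by the de Jong--van Straten correspondence between $\delta$-constant deformations and deformations of the normalization, and $\delta$-constancy of the pushed-forward family is exactly the Noether formula $\delta(C)=\binom{m}{2}+\sum_i\delta(\tilde C,p_i)$ that you invoke as a sanity check; it is in fact the reason the general fiber has no singularities other than the two listed types.
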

				Given a sandwiched singularity using Theorem \ref{thm:picdef} and iterating Theorem~\ref{thm:scott}, we obtain a picture deformation of $X$, the Milnor fiber of which is isomorphic to the resolution of $X$. This will be useful when we compare \QHD smoothings to the resolution of the singularity in Section~\ref{sect:latticemb}.\medskip
				
				One can gather all of the incidence information of this combinatorial setup into a $|\{\text{points}\}|\times|\{\text{curves}\}|$ matrix $I:=(\iota_{mn})$ where $\iota_{mn}=1$ if point $m$ is on curve $n$, and $0$ otherwise.
				\begin{theorem}[{\cite[Theorem 5.2]{de1998deformation}}]\label{exseq}
					There is an exact sequence 
					\begin{equation}
						0\to H_2(F)\to\mathbb Z\langle p\in C_t:m_p\neq 0\rangle\xrightarrow I\mathbb Z\langle(C_t)_i\rangle\to H_1(F)\to 0
					\end{equation}
					where $F$ is the Milnor fiber of the picture deformation.
				\end{theorem}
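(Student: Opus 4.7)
The plan is to realize the Milnor fiber $F$, up to homotopy equivalence, as a $2$-dimensional CW complex whose cellular chain complex in degrees $1$ and $2$ is exactly $\mathbb{Z}\langle p\rangle \xrightarrow{I} \mathbb{Z}\langle (C_t)_i\rangle$; the stated exact sequence will then follow formally. The starting point is that, by Theorem~\ref{thm:picdef}, $F$ is the general fiber $X(C_t, l_t)$ for small $t\neq 0$. Because the deformation is a picture deformation, $l_t$ is reduced, and the sandwich modification $\tilde A(C_t,l_t)\to \mathbb{C}^2$ is a composition of blowups at distinct points of $\mathbb{C}^2$: one blowup for each transverse multi-point singularity of $C_t$, together with the extra blowups that realize the multiplicity condition along each smooth branch. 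The surface $X(C_t, l_t)$ is then obtained by contracting those exceptional curves which miss every branch of $C_t$.

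Next I would construct a spine for $F$ from this modification. A small Milnor ball in $\mathbb{C}^2$ deformation retracts onto $C_t$, which is itself homotopy equivalent to a wedge of circles indexed by the irreducible components $(C_t)_i$; these circles serve as the $1$-cells. For each marked point $p$ with $m_p\neq 0$, the blowup at $p$ introduces an exceptional $\mathbb{CP}^1$ meeting each branch of $C_t$ through $p$ exactly once, and after the prescribed contraction this exceptional curve survives as a $2$-disk attached to the spine by a loop that traverses each $1$-cell $(C_t)_i$ through $p$. As a sanity check, a single smooth curve with two marked points reproduces the $A_1$ Milnor fiber $S^2$, consistent with the incidence matrix $I = (1,1)\colon \mathbb{Z}^2 \to \mathbb{Z}$ giving $H_2 = \mathbb{Z}$ and $H_1 = 0$. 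By construction the cellular boundary $\partial_2$ sends each point generator to the sum of the curves through it, so $\partial_2 = I$; basing all $1$-cells at a single $0$-cell makes $\partial_1 = 0$, whence $H_2(F)=\ker I$ and $H_1(F)=\mathrm{coker}\, I$, which is exactly the desired sequence.

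The principal obstacle is the middle step: honestly justifying that this combinatorial data produces a spine with the asserted attaching maps. One must trace the exceptional divisors through the sandwich contractions and treat uniformly the two qualitatively different sources of marked points, namely transverse multi-point singularities of $C_t$ (resolved by blowing up singular points of the curve) and free, $1$-tuple points (arising from the additional $l(i) - m(i)$ blowups on smooth branches). Once the attaching circles are correctly identified and one has verified that the complement in the Milnor ball contributes no further topology beyond the spine, the rest of the argument is a purely formal reading of a two-term cellular chain complex.
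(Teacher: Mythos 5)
First, a point of reference: the paper does not prove this statement --- it is quoted verbatim from de Jong--van Straten \cite[Theorem 5.2]{de1998deformation} --- so I am comparing your argument against the standard proof in that source rather than against anything internal to this paper.

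Your target is the right one (a two-term complex whose boundary map is the incidence matrix $I$), but the geometric construction you propose contains a genuine error before it reaches the step you flag as the obstacle. A Milnor ball is contractible, so it does not deformation retract onto $C_t$; and $C_t$ itself, being a union of embedded disks glued along the multiple points, is homotopy equivalent to its incidence graph, not to a wedge of circles indexed by its components. The circles you need as $1$-cells are the \emph{meridians} of the branches, and these live in the \emph{complement} of $C_t$, not on $C_t$: the Milnor fiber of a picture deformation is the blown-up ball $\tilde A(C_t,l_t)$ minus a tubular neighbourhood $T$ of the strict transform of $C_t$, not a neighbourhood of the curve or of the exceptional set. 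Relatedly, the exceptional sphere $E_p$ punctured at its $\sum_i\iota_{pi}$ intersections with the branches is a planar surface with that many boundary circles; it is a single $2$-cell only when $p$ is a free point, so for $m$-tuple points with $m\geq 2$ the ``disk attached along $\sum_i\iota_{pi}\mu_i$'' is the correct homological shadow of $E_p$ but not a cell of any CW structure you have actually produced. In short, the cellular chain complex you write down is not yet attached to a space.

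The clean route, and the one taken in the cited source, avoids building a spine: write $\tilde A(C_t,l_t)=F\cup T$ and apply Mayer--Vietoris. Since for a picture deformation the strict transforms of the branches are pairwise disjoint smoothly embedded disks, $T$ has contractible components and $F\cap T=\partial T$ is a disjoint union of solid tori, one per branch, with $H_1$ freely generated by the meridians $\mu_i$; meanwhile $H_2(\tilde A)$ is free on the classes $[E_p]$ and $H_1(\tilde A)=0$. The displayed sequence is then literally the Mayer--Vietoris sequence, and the identification of the connecting map $H_2(\tilde A)\to H_1(F\cap T)$ with $I$ is the one-line observation that $E_p$ meets the $i$th strict transform transversally in $\iota_{pi}$ points, so $\partial[E_p]=\sum_i\iota_{pi}\mu_i$. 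Your sanity check and the formal conclusion from the two-term complex then go through unchanged.
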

				Using \cite[Theorem 2]{greuel1981topology}, which states that for any smoothing of a normal surface singularity, the Milnor fiber $F$ satisfies $b_1(F)=0$, we get
				\begin{corollary}\label{muzero}
					$$\operatorname{rk} H_2(F)=:\mu(F)=\#\{\text{points}\}-\#\{\text{curves}\}.$$
				\end{corollary}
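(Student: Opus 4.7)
The statement follows almost formally from the exact sequence in Theorem~\ref{exseq} together with the cited result of Greuel--Steenbrink. My plan is to take ranks in the four-term exact sequence
\[
0\to H_2(F)\to \mathbb Z\langle p\in C_t : m_p\neq 0\rangle\xrightarrow{I}\mathbb Z\langle (C_t)_i\rangle\to H_1(F)\to 0
\]
and use the fact that in a bounded exact sequence of finitely generated abelian groups the alternating sum of ranks vanishes. All four groups are finitely generated: the two middle terms are free abelian of the claimed ranks $\#\{\text{points}\}$ and $\#\{\text{curves}\}$ by construction, and $F$ is homotopy equivalent to a finite CW complex so its homology is finitely generated. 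Taking ranks therefore yields
\[
\operatorname{rk} H_2(F)-\#\{\text{points}\}+\#\{\text{curves}\}-\operatorname{rk} H_1(F)=0.
\]

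The only remaining input is that the rightmost term contributes nothing, i.e.\ $\operatorname{rk} H_1(F)=b_1(F)=0$. This is exactly the content of \cite[Theorem 2]{greuel1981topology}: for any smoothing of a normal surface singularity the Milnor fiber has vanishing first Betti number. Since the picture deformation of Theorem~\ref{thm:picdef} is a smoothing of the (necessarily normal, being sandwiched and in particular rational) surface singularity $X(C,l)$, this hypothesis is satisfied. Rearranging the rank identity gives $\mu(F)=\operatorname{rk} H_2(F)=\#\{\text{points}\}-\#\{\text{curves}\}$, as claimed.

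There is no real obstacle here: the only point worth being careful about is confirming that Greuel--Steenbrink's hypotheses genuinely apply in the sandwiched setting — namely that $X(C,l)$ is a normal surface singularity and that the deformation produced by Theorem~\ref{thm:picdef} is indeed a smoothing (so that the generic fiber truly is the Milnor fiber of a smoothing component). Both facts are already in place from the preceding discussion, so the proof reduces to invoking them and writing down the rank computation above.
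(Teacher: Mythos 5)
Your argument is exactly the paper's: the corollary is derived by taking the alternating sum of ranks in the exact sequence of Theorem~\ref{exseq} and killing the $H_1(F)$ term via Greuel--Steenbrink's vanishing $b_1(F)=0$ for smoothings of normal surface singularities. The proposal just spells out the rank bookkeeping and the applicability of the hypotheses slightly more explicitly than the paper does, and is correct.
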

				Finally, we mention a method to recover the intersection lattice of the Milnor fiber of the smoothing from a picture deformation.
				\begin{theorem}[{\cite[Theorems 5.5, 5.11]{de1998deformation}}]\label{thm:int-embed}
					The intersection form on $H_2(F)\subset\mathbb Z\langle p\in C_t:m_p\neq 0\rangle$ is the restriction of the Euclidean negative definite inner product on the latter space.
					Moreover, the canonical class is represented by $(1,1,\dots,1)$ in this basis.
				\end{theorem}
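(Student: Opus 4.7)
The plan is to realize the Milnor fiber $F$ inside an explicit blow-up of a ball in $\mathbb{C}^2$ built from the picture deformation data, and then extract both claims from intersection theory on this ambient surface.

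First I would construct the ambient surface. For a picture deformation, the divisor $l_t$ on the general fiber is reduced and its support generically consists of pairwise distinct smooth points on the components $C_i$. Blow up a small ball around $0\in\mathbb{C}^2$ at each marked point $p$; call the result $Y$. Then $Y$ is smooth, and the exceptional divisors $E_p$ are pairwise disjoint $(-1)$-spheres, so $H_2(Y)$ is free on $\{[E_p]\}$ with $[E_p]\cdot[E_q]=-\delta_{pq}$. This is exactly the negative Euclidean inner product on $\mathbb{Z}\langle p\in C_t:m_p\neq 0\rangle$.

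Second I would identify $F$ inside $Y$. By construction, $X(C_t,l_t)$ is obtained from $Y$ by contracting the non-$(-1)$ components, in particular the proper transforms $C_i'$; up to homotopy this identifies $F$ with a regular neighborhood of $\bigcup_p E_p$ inside $Y\setminus\bigcup_i C_i'$. The induced inclusion $H_2(F)\hookrightarrow H_2(Y)$ realizes $H_2(F)$ as the sublattice of classes orthogonal to the contracted ones. Since $[C_i']=-\sum_{p\in C_i}[E_p]$ in $H_2(Y)$, the orthogonality condition becomes $\sum_{p\in C_i}n_p=0$, recovering the kernel of the incidence matrix and matching the exact sequence of Theorem \ref{exseq}. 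Pairings inside $H_2(F)$ then coincide with pairings in $H_2(Y)$, giving the first assertion.

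Third, for the canonical class: since $K_{\mathbb{C}^2}=0$ and each blow-up at a smooth point adds one copy of its new exceptional divisor, $K_Y=\sum_p[E_p]$, i.e., the all-ones vector $(1,\ldots,1)$ in the basis $\{[E_p]\}$. For any $x\in H_2(F)$ represented by a cycle disjoint from the contracted $C_i'$, adjunction in $F$ and in $Y$ give the same value of $K\cdot x$, so $K_F$ restricts from $K_Y$; hence it is represented by $(1,\ldots,1)$. As a sanity check, the simplest class $x=[p]-[q]$ for $p,q$ on the same component can be represented by a vanishing $(-2)$-sphere, and both sides give $K\cdot x=0$.

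The hardest step is the second: rigorously identifying the topological type of $F$ with a neighborhood of $\bigcup E_p$ inside $Y$ and matching the inclusion-induced map on $H_2$ with the algebraic sequence of Theorem \ref{exseq}. This requires careful control over the one-parameter family of surfaces produced by the picture deformation, including the extra blow-ups at multi-points of $C_t$ needed for a good embedded resolution; this is the technical heart of the cited De Jong-Van Straten theorems.
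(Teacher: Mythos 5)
This theorem is imported verbatim from \cite{de1998deformation} and the paper gives no proof of its own, so the only meaningful comparison is with the source; your argument is essentially the one given there: blow up the ball at the (reduced) points of $l_t$ to get $Y$, so that $H_2(Y)$ is the diagonal negative definite lattice on the $[E_p]$, identify $H_2(F)$ with the sublattice orthogonal to the strict transforms (equivalently $\ker I$, matching Theorem~\ref{exseq}), and read off $K_Y=\sum_p E_p$ from $K_{\mathbb{C}^2}=0$. One step is misstated, though: $F$ is \emph{not} a regular neighborhood of $\bigcup_p E_p$ inside $Y\setminus\bigcup_i C_i'$ --- each $E_p$ is punctured by the strict transforms it meets, so a neighborhood of their union has trivial $H_2$ and would give the wrong answer if taken literally. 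The correct identification is $F\simeq \rho^{-1}(B_\epsilon)\setminus\nu\bigl(\bigcup_i C_i'\bigr)$, the full complement of a tubular neighborhood of the strict transforms; classes in $\ker I$ are then represented by closed surfaces obtained by tubing the punctured exceptional spheres together along $\partial\nu(C_i')$, and the exact sequence of Theorem~\ref{exseq} is the long exact sequence of the pair $(Y,F)$ with $H_2(Y,F)\cong\mathbb{Z}\langle C_i\rangle$ by Lefschetz duality. Also note that $[C_i']$ lives in $H_2(Y,\partial Y)$ rather than $H_2(Y)$; your formula $[C_i']=-\sum_{p\in C_i}[E_p]$ should be read as the image under the (unimodular) intersection pairing, which does yield the orthogonality condition $\sum_{p\in C_i}n_p=0$ you want. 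With these corrections the argument is the standard one and both assertions follow as you describe.
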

				
				\section{Combinatorial smoothings}\label{sect:theorem}
				\subsection{The minimal rational case}Throughout this section, we assume $\Gamma$ to be a minimal rational graph with a given sandwich presentation where each component of the corresponding decorated curve is smooth.
				
				\begin{definition}
					In a tree graph $\Gamma$ we denote the unique path between two vertices $v,w\in\Gamma$ by $p(v,w):=(v=v_1,\dots,v_n=w)\subset V(\Gamma)$.
				\end{definition}
				\begin{definition}\label{def:sandwich presentation}
					Given a minimal rational graph $\Gamma$, choose a vertex $v\in\Gamma$. Add $-(deg(v)+e(v))-1\geq0$ many $-1$ leaves to it, and $-(deg(w)+e(w))$ many $-1$ leaves to every other vertex ($deg$ denotes the degree of the vertex in the graph $\Gamma$, and $e$ is the Euler number of the vertex). This larger graph $\tilde\Gamma$ is a \textit{sandwich presentation} of $\Gamma$ \textit{with smooth branches}, where \textit{the blowdown ends at $v$}.
				\end{definition}
				We will be only interested in the incidence structure of the smoothings. To this end, we identify the components of the deformed curve $(C)_i$ with the support of their decoration divisor $(l)_i$. We make this identification concrete with the following definition.
				\begin{definition}\label{def:smoothsmoothing}
					Given a sandwich presentation of $\Gamma$ with smooth branches define a \textit{combinatorial smoothing} to be a tuple $(V,\mathcal C)$, where $V$ is a set of points, and $\mathcal C\subset\mathcal P(V)$ represents the curves corresponding to each $-1$ leaf of the presentation.
					We denote a curve on a vertex $w\in\Gamma$ by $C_w$, and $v$ denotes the last vertex of the blowdown.
					The curves have to abide by the following two rules:
					\begin{enumerate}
						\item $|C_w|=|p(w,v)|+1$
						\item $|C_w\cap C_z|=|p(w,v)\cap p(z,v)|$
					\end{enumerate}
				\end{definition}
				\begin{remark}
					Using the Noether multiplicity formula, it is clear that given a sandwiched singularity with smooth branches, this is the incidence structure of a picture deformation.
					
					Again, we note that curves are identified with the support of their decoration divisor, and so are finite sets.
					
				\end{remark}
				There are many different ways to give a sandwich presentation of $\Gamma$ with smooth curves, this corresponds to the choice of $v$ in Definition~\ref{def:sandwich presentation}, the vertex where we put one curve less than would be required to blow it down.
				\cite[Theorem 4.16.]{de1998deformation} tells us how to relate combinatorial smoothings under a change of this  last vertex  $v$.
				\begin{theorem}[{[loc. cit.]}]\label{switch}
					Given a sandwich presentation of $\Gamma$ with smooth branches, ending at a vertex $v$, a combinatorial smoothing $(V,\mathcal C)$ and another vertex $w$ with $deg(w)+e(w)<0$ one gets another sandwich presentation of $\Gamma$ with smooth branches, ending at $w$ by adding an extra $-1$ leaf to $v$, and removing one from $w$.
					There is a combinatorial smoothing of this graph $(\tilde V,\tilde{\mathcal C})$ obtained as follows: 
					\begin{enumerate}
						\item $\tilde V:=V$
						\item choose an arbitrary $C_w$ curve to remove
						\item the new curve $\tilde C_v:=C_w$
						\item for $C\neq C_w$ and $p\in V$ we have $\tilde C= C_w\triangle C$ 
					\end{enumerate}
				\end{theorem}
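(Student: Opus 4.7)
The plan is to verify the two incidence conditions of Definition~\ref{def:smoothsmoothing} for the proposed data $(\tilde V, \tilde{\mathcal C})$ with new basepoint $w$. Write $D$ for the specific removed $C_w$ curve, so that by construction $\tilde C_v = D$ and every other curve $C$ is replaced by $\tilde C = D \triangle C$. First I would note that the modified sandwich presentation is itself legal: the hypothesis $\deg(w)+e(w)<0$ guarantees that $w$ had at least one $-1$ leaf to begin with, so removing one from $w$ and adding one at $v$ produces exactly the leaf counts required by Definition~\ref{def:sandwich presentation} for a blowdown ending at $w$.

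For the cardinality condition, for any curve $\tilde C_u$ with $u\neq v$ one has
\[
|\tilde C_u| = |D \triangle C_u| = |D| + |C_u| - 2|D \cap C_u|.
\]
Substituting the original relations $|D|=|p(w,v)|+1$, $|C_u|=|p(u,v)|+1$, and $|D\cap C_u|=|p(w,v)\cap p(u,v)|$, the desired equality $|\tilde C_u|=|p(u,w)|+1$ collapses to the tree identity
\[
|p(w,v)|+|p(u,v)|-2|p(w,v)\cap p(u,v)|=|p(u,w)|-1.
\]
Letting $m$ be the median of $\{u,v,w\}$, the two paths $p(w,v)$ and $p(u,v)$ share exactly $p(m,v)$, so their symmetric difference as vertex sets equals $p(u,w)\setminus\{m\}$, which proves the identity. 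The edge case $u=v$ is immediate since $|\tilde C_v|=|D|=|p(v,w)|+1$.

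For the intersection condition applied to two transformed curves $\tilde C_u = D\triangle C_u$ and $\tilde C_{u'} = D\triangle C_{u'}$, a short set-theoretic manipulation (a point lies in both symmetric differences iff it lies in $D$ alone or in $C_u\cap C_{u'}$ alone) gives
\[
|\tilde C_u \cap \tilde C_{u'}| = |D| + |C_u\cap C_{u'}| - |D\cap C_u| - |D\cap C_{u'}|.
\]
Writing each term using path-medians, the required equality $|\tilde C_u \cap \tilde C_{u'}|=|p(u,w)\cap p(u',w)|$ reduces to a four-point identity among medians of triples drawn from $\{u,u',v,w\}$. I would verify this by a short case analysis on the topology of the Steiner subtree spanned by these four points: generically there are two Steiner vertices and three possible pairings of the four endpoints, together with one degenerate configuration where three or more of the points share a Steiner vertex. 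The remaining pair-types---intersections involving the new $\tilde C_v=D$, or two transformed curves sitting at the same vertex---follow from one-dimensional specializations of the same calculation.

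The main obstacle is not mathematical depth but bookkeeping: one must consistently distinguish the removed curve $D$ (which becomes the new $\tilde C_v$), the \emph{other} curves originally at $w$ (which undergo the same XOR as every other curve), and any curves that were already at $v$ before the switch. A cleaner reformulation identifies each curve with its indicator function in $\mathbb F_2^V$ and views $C\mapsto D\triangle C$ as an $\mathbb F_2$-affine translation by $D$; from this perspective the basepoint change from $v$ to $w$ is literally a change of origin, which I expect is close in spirit to the intrinsic argument in \cite{de1998deformation}.
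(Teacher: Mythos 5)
Your verification is correct, but it is worth noting that the paper offers no proof of this statement at all: Theorem~\ref{switch} is quoted verbatim from de Jong--van Straten (\cite[Theorem 4.16]{de1998deformation}), where the argument is geometric --- it tracks how the decorated curve and its picture deformation transform when the sandwich presentation is altered, so the output is known to come from an actual deformation of the new presentation. What you supply instead is a self-contained combinatorial check that the operation $C\mapsto D\triangle C$ preserves the two axioms of Definition~\ref{def:smoothsmoothing}, which is exactly the content needed for the reduction arguments of Section~\ref{sect:theorem}. Your cardinality computation is right: with $m$ the median of $\{u,v,w\}$ one has $p(w,v)\cap p(u,v)=p(m,v)$, and the identity $|p(w,v)|+|p(u,v)|-2|p(m,v)|=|p(u,w)|-1$ follows by splitting each path at $m$. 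Your inclusion--exclusion for $|\tilde C_u\cap\tilde C_{u'}|$ is also correct (the triple-intersection terms cancel), and the resulting four-point median identity
\[
d(w,v)+d\bigl(m(u,u';v),v\bigr)-d\bigl(m(u,w;v),v\bigr)-d\bigl(m(u',w;v),v\bigr)=d\bigl(m(u,u';w),w\bigr)
\]
does hold in each of the three generic Steiner topologies of $\{u,u',v,w\}$ (and degenerates correctly), as does the specialization $|D\cap(D\triangle C_u)|=|D|-|D\cap C_u|$ governing intersections with the new $\tilde C_v$. You correctly flag that $\deg(w)+e(w)<0$ is what makes the new presentation legal, and the edge cases (remaining curves at $w$ shrinking to size $2$, curves at $v$ growing to size $|p(v,w)|+1$) all come out of the same formulas. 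The trade-off between the two routes: yours is elementary and proves precisely the combinatorial statement, while the citation carries the stronger geometric fact that $(\tilde V,\tilde{\mathcal C})$ is realized by a genuine picture deformation; since the paper only ever uses the incidence data downstream, your argument would suffice for its purposes. The one place you stop short of a complete write-up is the Steiner case analysis, which you describe but do not execute; it is routine and does close.
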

				\begin{remark}
					Switching from $C_v$ to $C_w$, and then back to $C_v$, gives back the original smoothing.
				\end{remark}
				\begin{definition}
					We put a partial ordering on the graph: $x>y$ if $y\in p(x,v)$ where $v$ is the last vertex to be blown down.
				\end{definition}
				\begin{remark}
					From the definition it is clear, that if $x\leq y$, then $|C_x\cap C_y|=|C_x|-1$, or stated differently $\exists! p\in C_x:C_x\setminus\{p\}\subset C_y$.
					On the other hand, if $x\not\gtrless y$ and $z\geq y$, then $|C_x\cap C_y|=|C_x\cap C_z|$.
					These basic properties will be used many times in the following.
				\end{remark}
				\begin{definition}\label{dfn:redtrip}
					A \textit{reducing triple} consists of three (not necessarily distinct) vertices $v\leq w\leq z$, subject to the following conditions:
					\begin{enumerate}
						\item\label{dfn:redtrip-int} $\{deg(v),deg(w),deg(z)\}\subset\{1,2\}$ or $v=w=z$
						%\item $v$ is the last vertex to be blown down
						\item\label{dfn:redtrip-curves} there is one curve on $w$ and at least one on $v$ and $z$ %(for a total of at least two curves if $w=z$)
						\item no vertex in $p(v,z)\setminus\{v,w,z\}$ has any curves
						\item $\Gamma\setminus p(v,z)$ and $p(v,z)$ only connect with edges in $\Gamma$ at $v$ and $z$
					\end{enumerate}
					(\ref{dfn:redtrip-curves}) is to be understood additively, i.e. if say $v=w$, then it should have at least two curves (one for $w$ and at least one more for $v$).
				\end{definition}
				\begin{definition}\label{def:qp}
					For a reducing triple $v\leq w\leq z$, we let the blowdown end at $v$ and pick a distinguished curve on $z$ and denote it $C_z$, the curve on $w$ is denoted $C_w$.
					We name $C_w\setminus C_z=:\{Q\},\ C_w\cap C_z=:P,\ C_z\setminus P=:Q'$.
					Similarly, we will denote curves on a vertex $x$ by $C_x^1, C_x^2$, and so on.
				\end{definition}
				\subsection{The reduction algorithm}\label{subsec:algo}
				Next, we prove a few simple statements in the context of a given reducing triple.
				From now on, we also suppose that every node satisfies $deg(v)+e(v)\leq -2$.
				
				\begin{lemma}\label{lem:qpropagate}
					Let $w\neq x\in\Gamma$ be a vertex. If $Q\in C_x$, then $Q\in C_a$ for all $x\leq a$.
				\end{lemma}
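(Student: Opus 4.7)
The plan is to argue by contradiction through a combinatorial inclusion chase, using the two defining rules of a combinatorial smoothing together with the rigidity of the reducing triple.

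I would first locate $x$. Condition~(3) of the reducing triple says no vertex of $p(v,z)\setminus\{v,w,z\}$ carries a curve, and condition~(4) isolates $p(v,z)$ from the rest of $\Gamma$ except at $v$ and $z$. Combined with $x\neq w$ and $Q\in C_x$, this forces either $x=v$, or $x=z$ (excluded since $Q\notin C_z$), or $x$ to lie strictly above $z$ in the partial order. The $x=v$ case (possible only when $v<w$) is handled separately: I would use $|C_v\cap C_w|=|C_v\cap C_z|=1$ together with $Q\notin C_z$ to pin down the two points of $C_v$ and check that the resulting configuration is incompatible with the other curves on the graph. The essential case is $x>z$, and then any $a\geq x$ automatically satisfies $a>z>w$.

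For this main case I would introduce the notation $r_{yy'}\in C_y$ for the unique point of $C_y$ missing from $C_{y'}$ whenever $|C_y\cap C_{y'}|=|C_y|-1$. By definition $r_{wz}=Q$; the hypothesis $Q\in C_x$ forces $r_{xw}\neq Q$, and tracing $C_w\setminus\{Q\}\subset C_z$ through $C_z\setminus\{r_{xz}\}\subset C_x$ gives the identification $r_{xw}=r_{xz}$, so in particular $r_{xz}\in C_w$. Now suppose for contradiction that $Q\notin C_a$; then $r_{wa}=Q$. Combining $C_w\setminus\{Q\}\subset C_z$ with $C_z\setminus\{r_{za}\}\subset C_a$ shows $r_{za}\notin C_w$, hence $r_{za}\neq r_{xz}$, which puts $r_{za}\in C_z\setminus\{r_{xz}\}\subset C_x$; together with $r_{za}\notin C_a$ this yields $r_{xa}=r_{za}\notin C_w$. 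Since $Q\in C_w$ we then have $r_{xa}\neq Q$, so $Q\in C_x\setminus\{r_{xa}\}\subset C_a$, contradicting $Q\notin C_a$.

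The main obstacle is the bookkeeping of the inclusion chase: the identifications $r_{xw}=r_{xz}$, the non-membership $r_{za}\notin C_w$, and the equality $r_{xa}=r_{za}$ each depend on applying the defining intersection condition between pairs of nested curves and then matching unique missing points across different pairs. Each individual step is short, but the order is important, since earlier identifications are used to exclude candidate values for later ones; establishing $r_{xz}\in C_w$ up front is what allows us to conclude $r_{za}\neq r_{xz}$ later. I also expect to have to handle the peripheral case where $x$ is not directly comparable to $w$, by invoking condition~(4) to reduce to a subtree attached at $v$ or $z$ and applying one of the cases already treated.
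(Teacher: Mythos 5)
Your main case ($x>z$, hence $w\leq z<x\leq a$) is correct and is essentially the paper's own argument written out in full: the paper likewise reduces to the observation that $Q\notin C_a$ forces $C_x\setminus\{Q\}\subset C_a$ (the unique point of $C_x$ missed by $C_a$ would have to be $Q$), and then extracts $Q\in C_a$ from the forced intersection numbers with $C_w$ and $C_z$. Your chain $r_{wx}=r_{zx}\in C_w$, then $r_{za}\notin C_w$, then $r_{xa}=r_{za}\neq Q$, is the same computation made explicit, and each step checks out. (Two of your subscripts are reversed relative to your own convention: $r_{xw}$ and $r_{xz}$ are undefined when $x>z\geq w$, since then $|C_x\cap C_w|=|C_w|-1$ rather than $|C_x|-1$; you mean $r_{wx}$ and $r_{zx}$. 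The intended meaning is clear.)

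The gaps are in the remaining cases, which the paper treats uniformly under the single dichotomy ``$Q'\subset C_x$ if $x\geq z$, versus exactly one $q'\in Q'$ lies on $C_x$ if $x\not\geq z$.'' First, your location analysis omits vertices lying in a component of $\Gamma\setminus p(v,z)$ attached at $v$: these are neither $v$, nor $z$, nor above $z$, yet they may carry curves containing $Q$. This case is in fact short --- for such $x$ and any $a\geq x$ one has $|C_a\cap C_w|=|C_a\cap C_z|=1$, and $C_x\setminus\{Q\}\subset C_a$ puts the unique point of $C_a\cap C_z$ in $Q'$, so $C_a\cap P=\emptyset$ and the single intersection with $C_w=P\cup\{Q\}$ must be $Q$ --- but ``apply one of the cases already treated'' does not cover it, since neither of your treated cases has $|C_a\cap C_w|=1$. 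Second, dismissing $x=z$ ``since $Q\notin C_z$'' conflates the vertex $z$ with the distinguished curve $C_z$; the vertex may carry further curves $C_z^2,\dots$, and these belong to the $x\geq z$ branch. Third, and most substantively, your plan for $x=v$ (``pin down the two points of $C_v$ and check that the resulting configuration is incompatible with the other curves'') is not yet an argument: the configuration $C_v=\{Q,q'\}$ with $q'\in Q'$ is perfectly consistent with the pairwise intersection numbers of $C_v$, $C_w$ and $C_z$ alone, and since $a=z$ satisfies $a\geq v$ while $Q\notin C_z$ by definition, this sub-case amounts to proving outright that no curve on $v$ contains $Q$. That requires input beyond the three curves you name, and you do not indicate what that input is; this is the one place where the proposal is missing an idea rather than merely deferring bookkeeping.
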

				\begin{proof}
					Since $Q\in C_x$, either $Q'\subset C_x$ if $x\geq z$, or $\exists!q'\in Q':q'\in C_x$ if $x\not\geq z$.
					Now supposing  $Q\not\in C_a$ implies $C_x\setminus\{Q\}\subset C_a$ since $x\leq a$. In the first case this implies that $P\subset C_a$ by intersection count with $C_w$ and since $Q'\subset C_x\setminus\{Q\}$ we get that $C_z=P\cup Q'\subset C_a$, a contradiction.
					Similarly in the second case $q'\in C_a$ implies, that $Q\in C_a$, contradiction.
				\end{proof}
				\begin{corollary}
					If $Q\in C_x,\ Q\not\in C_y$, then $x\neq y$ unless $w=z=x=y$.\hfill\qedsymbol
				\end{corollary}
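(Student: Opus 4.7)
The statement is essentially the contrapositive of Lemma~\ref{lem:qpropagate}, so my plan is to argue by contradiction. Assume that $x = y$, so that $C_x$ and $C_y$ are two distinct curves attached to the same vertex of $\Gamma$, with $Q \in C_x$ and $Q \notin C_y$.

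Suppose first that $x \neq w$. Then I would invoke Lemma~\ref{lem:qpropagate} at the vertex $x$ with the curve $C_x$: the proof of the lemma, which starts from $|C_x \cap C_a| = |C_x| - 1$ for $a \geq x$ and deduces $Q \in C_a$, really only needs $C_a$ to be a curve distinct from $C_x$ with vertex $\geq x$. In particular, I may take $a = x$ and $C_a := C_y$, the intersection count $|C_x \cap C_y| = |C_x|-1$ being supplied by condition (2) of Definition~\ref{def:smoothsmoothing} applied to two distinct curves on the same vertex. Running the lemma's argument unchanged in this situation yields $Q \in C_y$, contradicting the hypothesis.

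The remaining case is $x = w$. Here $C_x$ is the distinguished curve $C_w$, so $Q \in C_x$ automatically, and the assumption amounts to saying that the vertex $w$ carries a second curve $C_y$. By the additivity reading of condition (\ref{dfn:redtrip-curves}) of Definition~\ref{dfn:redtrip}, a second curve on $w$ is possible only if $w = v$ or $w = z$. The case $w = z$ is exactly the allowed exception $w = z = x = y$, where $C_y$ may be identified with $C_z$ itself. In the remaining subcase $v = w \neq z$, I would use the freedom to choose any curve on $v = w$ as the distinguished $C_w$: swapping the labels of $C_x$ and $C_y$ replaces $C_w$ by a curve that still contains (the new) $Q$, and the previous case applies and closes the argument.

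The main obstacle I expect is precisely that last subcase $v = w \neq z$, because Lemma~\ref{lem:qpropagate} explicitly carves out $x = w$ and does not address the configuration in which several curves all live on the vertex $v = w$. Everything else is a direct rereading of the lemma with its target curve taken to be a second curve at the same vertex.
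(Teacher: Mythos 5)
The paper offers no written proof here: the statement is meant to follow at once from Lemma~\ref{lem:qpropagate} by taking $a=x$ (the conclusion ``$Q\in C_a$ for all $x\leq a$'' already covers every other curve sitting on the vertex $x$ itself, since $x\leq x$). Your first case is exactly this argument, and your check that the lemma's proof only uses $|C_x\cap C_a|=|C_x|-1$, which holds for two distinct curves on the same vertex by condition (2) of Definition~\ref{def:smoothsmoothing}, is correct. Your identification of $w=z=x=y$ as the genuine exception (there $C_w$ and $C_z$ are distinct curves on the same vertex and differ precisely in $Q$ by Definition~\ref{def:qp}) is also right.

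The gap is the one you flagged yourself, and it is real: the residual case $x=y=w$ with $w\neq z$ and $w$ carrying a second curve is not closed by your relabelling device. The point $Q$ is \emph{defined} as $C_w\setminus C_z$ for the chosen distinguished curve $C_w$; if you swap which curve on $v=w$ is called $C_w$, you change $Q$, so the conclusion you obtain concerns a different point and says nothing about whether the original $Q$ lies on the original second curve. Moreover, after the swap the vertex in question is still $w$, so Lemma~\ref{lem:qpropagate} (which explicitly requires $x\neq w$) remains inapplicable and ``the previous case'' never becomes available. A further small over-reading: condition~(\ref{dfn:redtrip-curves}) of Definition~\ref{dfn:redtrip} is only a lower bound on the number of curves (the actual count on each vertex is fixed by the sandwich presentation of Definition~\ref{def:sandwich presentation}), so it does not by itself exclude a second curve on $w$ even when $v\neq w\neq z$. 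To actually close the case $v=w\neq z$ one has to argue directly: there $|C_w|=2$ and $P=C_w\cap C_z$ is a single point $p$, so a curve $C_y$ on $v=w$ avoiding $Q$ would have the form $\{p,r\}$ with $r\notin C_w\cup C_z$, and ruling this out requires more than the local intersection counts (for instance input from the no-free-point property discussed after these lemmas); it does not follow from Lemma~\ref{lem:qpropagate} alone, and your proposal does not supply the missing argument.
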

				\begin{remark}
					By this corollary, we can say that a \textit{vertex $v\in\Gamma$ contains $Q$} or not, dependent on if its curves do.
				\end{remark}
				\begin{lemma}
					If $a,b\in\Gamma$ are in different components of $\Gamma\setminus \{v\}$ and $Q\in C_a$, then $Q\not\in C_b$.
				\end{lemma}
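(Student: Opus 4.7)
My plan is to argue by contradiction: suppose $Q\in C_a\cap C_b$ with $a,b$ in different components of $\Gamma\setminus\{v\}$. Since the paths $p(a,v)$ and $p(b,v)$ only meet at $v$, Definition~\ref{def:smoothsmoothing} gives $|C_a\cap C_b|=1$, hence $C_a\cap C_b=\{Q\}$. Applying the contrapositive of Lemma~\ref{lem:qpropagate} to $z$ (using $v\leq z$ and $v\neq w$), one also has $Q\notin C_v$.

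I split on the position of $w$. If $w$ lies in a component different from both $a$ and $b$ (including the case $w=v$), then $|C_a\cap C_w|=|C_b\cap C_w|=1$, each necessarily equal to $\{Q\}$. Let $p$ be the unique point of $C_v\cap C_w$: then $p\in C_v$, so $p\neq Q$. Since $|C_v|=2$ and the shared points $C_v\cap C_a$ and $C_v\cap C_b$ must be distinct (a common point would lie in $C_a\cap C_b=\{Q\}$, contradicting membership in $C_v$), these two shared points exhaust $C_v$, so $p$ coincides with one of them, say with $C_v\cap C_a$. Then $p\in C_a\cap C_w=\{Q\}$, forcing $p=Q$, a contradiction.

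The main case is $w$ in the component of $a$, say $A$, with $b$ in some other component. Assume first $a\neq w$. Lemma~\ref{lem:qpropagate} gives $Q\in C_y$ for all $y\geq a$, so $a\not\leq z$ (else $Q\in C_z$). By Definition~\ref{dfn:redtrip}(4), the chain $p(v,z)$ has branches only at $v$ and $z$, which forces $a>z$. The internal argument in the proof of Lemma~\ref{lem:qpropagate} (the subcase $x\geq z$) then yields $Q'\subset C_a$. Now the unique shared point $p'\in C_b\cap C_z$ satisfies $p'\neq Q$ (since $Q\notin C_z$); if $p'\in P\subset C_w$, then $p'\in C_b\cap C_w=\{Q\}$ gives $p'=Q$, a contradiction. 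Hence $p'\in Q'\subset C_a$, so $p'\in C_a\cap C_b=\{Q\}$, again contradicting $p'\neq Q$.

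The main obstacle is the remaining subcase $a=w$, where Lemma~\ref{lem:qpropagate} cannot be applied from $a$. My plan here is to argue symmetrically using propagation from $b$ (which is $\neq w$): the sub-argument in Lemma~\ref{lem:qpropagate}'s proof (the case $x\not\geq z$) yields $|Q'\cap C_b|=1$, identifying this unique point with $C_b\cap C_z$. A careful comparison of shared points on $C_v$, using that $C_v\cap C_w=C_v\cap C_z$ (both being the unique point of $P$ in $C_v$) while $C_v\cap C_b$ lies outside $C_w\cup C_z$, together with $|C_b\cap C_w|=|C_b\cap C_z|=1$, should then force an inconsistency between $Q\in C_w\cap C_b$ and $Q\notin C_z$. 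As a fallback, one can switch the ending vertex of the blowdown via Theorem~\ref{switch} to reduce this case to the previously handled $a\neq w$ situation.
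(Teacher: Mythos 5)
Your Case B computation for $a\neq w$ (namely $a>z$, hence $Q'\subset C_a$; while $C_b\cap C_w=\{Q\}$ forces the unique point of $C_b\cap C_z$ into $Q'$ and hence into $C_a\cap C_b$, giving two intersection points where only one is allowed) is exactly the paper's argument and is correct. The proof as a whole, however, has two genuine gaps.

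First, Case A is non-vacuous only where its argument breaks. If $w\neq v$ lay in a component of $\Gamma\setminus\{v\}$ different from both $a$ and $b$, then $\deg(v)\geq 3$, and condition (1) of Definition~\ref{dfn:redtrip} would then force $v=w=z$, contradicting $w\neq v$; so the only real instances of Case A are those with $w=v$ (including the triple $v=v=v$, which the paper treats as a separate case). Precisely there your key step ``$p\in C_v$, so $p\neq Q$'' is unjustified: you obtained $Q\notin C_v$ from Lemma~\ref{lem:qpropagate} only under the hypothesis $v\neq w$, and when $v=w$ the curve $C_v$ you invoke is a second curve on $v$, which nothing established so far prevents from being $\{Q,q'\}$ with $q'\in Q'$ --- in which case $C_v\cap C_a=C_v\cap C_b=\{Q\}$ and the ``two distinct shared points exhaust $C_v$'' step collapses. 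The repair is to note that when $v=w$ whichever of $a,b$ lies in the component of $z$ still satisfies $a\geq z$ (intermediate vertices of $p(v,z)$ carry no curves), so the Case B computation applies verbatim; as written, though, you route these configurations through an argument that does not apply to them.

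Second, the subcase $a=w$ is not actually proved: ``should then force an inconsistency'' is a plan, not an argument, and the fallback via Theorem~\ref{switch} is not worked out --- switching the end of the blowdown replaces every other curve by a symmetric difference with the removed one and changes which point plays the role of $Q$, so it is not clear that this reduces to the subcase you handled. (The paper's own proof also quietly restricts to $z<a$, so the omission is shared, but since you single this subcase out as the main obstacle it needs to be closed rather than sketched.)
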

				\begin{proof}
					The $deg(v)=1$ case is vacuous.
					Secondly, if $deg(v)=2$, $Q\in C_a\cap C_b$ suppose $z\leq a$, now $b$ being in a different component means, that $C_b$ has to intersect $C_a$ once, but from $Q\in C_b$, we get that there is also a $q'\in Q'$ which is in $C_b$ since it has to intersect $C_z$ once.
					$z\leq a$ implies that $Q'\subset C_a$, thus $|C_a\cap C_b|\geq 2$, a contradiction.
					
					In the third case, when the reducing triple is $v,v,v$, $Q'$ consists of a single point, and the same argument shows that $Q\in C_a\cap C_b$ implies $Q'\in C_a\cap C_b$, but $|C_a\cap C_b|=1$ by assumption.
				\end{proof}
				
				\begin{lemma}\label{oneway}
					Let $(w\neq)x<a,b$ with $a\not\gtrless b$ in two different components of $\Gamma\setminus\{x\}$.
					If $Q\not\in C_x^1,C_x^2$, then $Q$ cannot be in both $C_a$ and $C_b$.
				\end{lemma}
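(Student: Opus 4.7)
The plan is to suppose $Q\in C_a\cap C_b$ for contradiction, and to combine intersection counts around $x$ with the placement of $w$. Since $x<a,b$ and the paths $p(a,v),p(b,v)$ share exactly the segment $p(x,v)$, each of the cardinalities $|C_x^1\cap C_x^2|$, $|C_x^i\cap C_a|$, $|C_x^i\cap C_b|$, $|C_a\cap C_b|$ equals $|p(x,v)|$, while $|C_x^i|=|p(x,v)|+1$. Setting $T:=C_x^1\cap C_x^2$ and letting $r_i$ be the unique point of $C_x^i\setminus T$, a short count yields a clean dichotomy for each of $C_a,C_b$: either $T\subset C$ with $r_1,r_2\notin C$ (Case A), or $r_1,r_2\in C$ with $C\cap T=T\setminus\{s\}$ for a unique $s\in T$ (Case B).

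The diagonal cases are immediate. In AA the inclusion $T\subset C_a\cap C_b$ and the size match give $C_a\cap C_b=T\subset C_x^1$, so $Q\in C_x^1$, contradicting the hypothesis. In BB the inclusion $(T\setminus\{s,s'\})\cup\{r_1,r_2\}\subset C_a\cap C_b$ forces $s\neq s'$ (else the count overshoots), whence $C_a\cap C_b=(T\setminus\{s,s'\})\cup\{r_1,r_2\}\subset C_x^1\cup C_x^2$, again contradicting $Q\notin C_x^1\cup C_x^2$.

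The off-diagonal case AB (and symmetrically BA) is the delicate one, and here the curve $C_w$ is indispensable. After reducing $x=v$ to the previous lemma, the hypothesis that $a,b$ lie in different components of $\Gamma\setminus\{x\}$ forces $\deg(x)\geq 3$, hence $x\notin p(v,z)\setminus\{v,z\}$, so $x$ must sit in one of the subtrees attached to $p(v,z)$ at $v$ or $z$. When $w<x$ (which covers both $w=v$ and $x$ in the $z$-subtree), $|C_w\cap C_x^i|=|C_w|-1$ together with $Q\in C_w$, $Q\notin C_x^i$ force $C_w\setminus\{Q\}\subset T$, and then $T\subset C_a$ from Case A plus $Q\in C_a$ yield $C_w\subset C_a$, contradicting $|C_w\cap C_a|=|C_w|-1$. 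When $x$ lives in the $v$-subtree with $w\neq v$, the vertices $w,x$ are incomparable with their paths to $v$ meeting only at $v$, so $|C_w\cap C_a|=|C_w\cap C_b|=1$ and both intersections equal $\{Q\}$; examining the unique point $p_2:=C_x^2\cap C_w$, either $p_2\in T\subset C_a$ lands $p_2\in C_a\cap C_w=\{Q\}$ and hence $Q\in T\subset C_x^1$, or else $p_2=r_2\in C_b$ lands $r_2\in C_b\cap C_w=\{Q\}$ and hence $Q\in C_x^2$; either way a contradiction. The main obstacle is precisely this case AB: the intersection numbers among $C_a,C_b,C_x^1,C_x^2$ are too permissive to close it locally, and one has to invoke $C_w$ and split on the tree-geometric placement of $x$ relative to the path $p(v,z)$.
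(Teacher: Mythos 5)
Your proof is correct, and its decomposition is genuinely different from the paper's, though built from the same intersection bookkeeping. The paper splits at the top level on the position of $x$ in the tree ($x>z$ versus $x$ in another component of $\Gamma\setminus\{v\}$) and brings in $C_w$ and $C_z$ immediately in both cases: from $Q\in C_a$ it deduces exactly which point of each $C_x^i$ the curve $C_a$ must miss, concludes that $C_a$ and $C_b$ each contain all but at most two points of $C_x^1\cup C_x^2$ together with $Q$, and obtains $|C_a\cap C_b|\geq|p(x,v)|+1$ in a single count per positional case. Your top-level split is instead the purely local dichotomy A/B forced by $|C_a\cap C_x^i|=|C_x^i|-1$; the diagonal cases AA and BB die by counting against $|C_a\cap C_b|=|p(x,v)|$ alone, with no use of $C_w$ and no reference to where $x$ sits, and the reducing-triple data enters only in the genuinely tight case AB, where you then split on $w<x$ versus $x$ incomparable to $w$. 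What your route buys is a clean isolation of exactly where the hypothesis on $C_w$ is needed (and the observation that AA and BB are impossible for any two curves meeting in a point outside $C_x^1\cup C_x^2$); what the paper's route buys is uniformity, one count covering all configurations at once. Two small points to make explicit in a write-up: the inference $\deg(x)\geq 3$ is valid only after the case $x=v$ has been removed, which you do correctly by appeal to the preceding lemma; and in the case where $x$ is incomparable to $w$, the identity $C_a\cap C_w=\{Q\}$ rests on $a$ lying in the same component of $\Gamma\setminus\{v\}$ as $x$, hence in a different one from $w$, so that $|C_a\cap C_w|=1$.
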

				\begin{proof}
					Assume $x>z$ and let $n+1=|C_x^i|$, if $Q\in C_a$, then there is a point $q'\in Q'$ which is in $C_a$ and a point $p\in P$ with $p\not\in C_a$.
					Call $C_x^i\setminus C_x^j=\{p_i\}$.
					Since $C_a$ does not contain a point of $P$ and contains $Q$, it also contains both $p_i$'s.
					This means, that if $Q\in C_b$ were true, then $C_a$ and $C_b$ would intersect in at least $n-2$ points of $C_x^1\cap C_x^2$, in $Q$ and $p_1,p_2$ for a total of at least $n+1$ points, a contradiction, since they have to intersect $n$ times by assumption.
					
					If $x$ is in another component of $\Gamma\setminus\{v\}$, we argue similarly.
					The $C_x^i$ intersect $C_w, C_z$ in a single point, which can be in $C_x^1\cap C_x^2$, or the $p_i$'s. In either case, $C_a, C_b$ have to intersect in the remaining at least $n$ points of $C_x^1\cup C_x^2$, and in $Q$, a contradiction.
				\end{proof}

				We can extend the notion of reducing triple to the case where we replace (\ref{dfn:redtrip-int}) of Definition~\ref{dfn:redtrip} with $v=w$, $deg(v)=deg(w)<3$ and $deg(z)>2$.
				\begin{lemma}\label{leafred}
					With the modified assumption on the reducing triple, we have, that $C_z^1\cap C_z^2\cap C_v=\{P\}$.
				\end{lemma}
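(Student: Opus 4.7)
My plan is to reduce the claim to showing $P \in C_z^2$ and prove this by a pigeonhole argument on the symmetric difference $C_z^1 \triangle C_z^2$. Since $v = w$, we have $|C_w| = |p(v,v)|+1 = 2$, so $C_w = \{P, Q\}$ with $P = C_w \cap C_z^1$ (a single point, as $|p(w,v) \cap p(z,v)| = 1$) and $Q = C_w \setminus C_z^1$. Writing $L := |p(z,v)|+1$, we have $|C_z^j| = L$, $|C_z^1 \cap C_z^2| = L - 1$, and $|C \cap C_z^j| = 1$ for every curve $C$ on $v = w$. Since $C_z^1 \cap C_w = \{P\}$, the triple intersection $C_z^1 \cap C_z^2 \cap C_w \subseteq \{P\}$, so (reading $C_v$ as $C_w$) the lemma reduces to $P \in C_z^2$.

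I assume for contradiction $P \notin C_z^2$. Then $|C_w \cap C_z^2| = 1$ and $C_w = \{P, Q\}$ force $Q \in C_z^2$. I enumerate the remaining $c_v - 1 \geq 1$ curves on $v = w$ as $C^{(2)}, \ldots, C^{(c_v)}$ (where $c_v \geq 2$ by the additive interpretation of condition~(2) of Definition~\ref{dfn:redtrip}); each satisfies $|C^{(i)} \cap C_w| = 1$ so contains exactly one of $P, Q$, yielding a partition of indices $I_P \sqcup I_Q$. Write $C^{(i)} = \{P, y_i\}$ for $i \in I_P$ and $C^{(i)} = \{Q, y_i\}$ for $i \in I_Q$, with $y_i \notin \{P, Q\}$ (else $C^{(i)} = C_w$). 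The constraint $|C^{(i)} \cap C_z^j| = 1$ combined with the memberships of $P, Q$ then forces, for $i \in I_P$, that $y_i \in C_z^2 \setminus C_z^1$, and for $i \in I_Q$, that $y_i \in C_z^1 \setminus C_z^2$.

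The contradiction comes from the observation that $|C_z^j \setminus C_z^{3-j}| = L - (L - 1) = 1$, so each private side of the symmetric difference holds only one point. First, if both $I_P$ and $I_Q$ were nonempty, for $i \in I_P, j \in I_Q$ the requirement $|C^{(i)} \cap C^{(j)}| = 1$ would force $y_i = y_j$ (since $P \neq Q$ and $y_i, y_j \notin \{P, Q\}$), contradicting $y_i \in C_z^2$ and $y_j \notin C_z^2$. Hence one of $I_P, I_Q$ is empty; by the symmetric roles of $C_z^1, C_z^2$ we may assume $I_Q = \emptyset$, so the $c_v$ distinct points $Q, y_2, \ldots, y_{c_v}$ all lie in $C_z^2 \setminus C_z^1$, giving $c_v \leq 1$ against $c_v \geq 2$. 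The main conceptual obstacle is pinning down the right budget: once one spots that $|C_z^j \setminus C_z^{3-j}| = 1$ and that each extra curve on $v = w$ must plant a fresh point into exactly one of these one-element slots, the pigeonhole closes the argument immediately.
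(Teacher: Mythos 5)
Your proof is correct, but it takes a genuinely different route from the paper's. Both arguments reduce the lemma to showing $P\in C_z^2$ and assume $Q\in C_z^2$ for contradiction, but they extract the contradiction from different curves of the configuration. The paper picks a curve $C_x$ on some vertex $x>z$ (such an $x$ carrying a curve exists because $\deg(z)>2$ forces a component of $\Gamma\setminus\{z\}$ away from $v$, whose leaves carry curves); since $|C_x\cap C_v|=1$, the curve $C_x$ misses exactly one of $P,Q$, and the hypothesis $Q\in C_z^2$ then forces $C_x$ to contain all of $C_z^1$ or all of $C_z^2$, contradicting $|C_x\cap C_z^i|=|C_z^i|-1$. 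You instead stay at $v$ and use the second curve on $v=w$ guaranteed by the additive reading of condition~(\ref{dfn:redtrip-curves}) of Definition~\ref{dfn:redtrip}: any further curve $\{P,y\}$ or $\{Q,y\}$ on $v$ must place its fresh point $y$ into the one-element set $C_z^2\setminus C_z^1$ or $C_z^1\setminus C_z^2$, which under the contradiction hypothesis already equals $\{Q\}$ or $\{P\}$ respectively, while $y\notin\{P,Q\}$ --- so in fact each of your index sets $I_P$, $I_Q$ is forced to be empty outright, and your intermediate case analysis (both nonempty, then one empty) is a harmless detour. The two proofs thus lean on complementary hypotheses: the paper's uses $\deg(z)>2$ to find a curve beyond $z$, yours uses the multiplicity of curves at $v$; both are available here, and yours has the mild advantage of being local to $p(v,z)$, not requiring one to locate a curve outside it.
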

				\begin{proof}
					Let the blowdown end at $v$.
					We choose one of the curves on $z$ to be $C_z:=C_z^1$, and denote $C_z\cap C_v=:\{P\}$, $C_v\setminus\{P\}=:\{Q\}$ and $C_z\setminus\{Q\}=:Q'$ %and $|C_z|=n+1$
					as usual. Furthermore, assume that $Q\in C_z^2$.
					Pick $x>z$ with a curve.
					Without loss of generality, assume that $P\in C_x$, then $Q'\subset C_x$, since it missed $Q\in C_z^2$, and it cannot intersect $C_z^2$ enough times otherwise. This implies $C_z\subset C_x$, contradiction.
				\end{proof}
				This tells us which point of $C_v$ to label $Q$, now we only need to check that only one component of $\Gamma\setminus\{z\}$ and $\Gamma\setminus\{v\}$ can contain curves with $Q$, afterwards Lemma~\ref{lem:qpropagate} and Lemma~\ref{oneway} are applicable.
				\begin{lemma}
					If $x\not\gtrless y$ are in two different components of $\Gamma\setminus\{z\}$, then $x,y$ cannot both contain $Q$.
				\end{lemma}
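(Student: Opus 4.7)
The plan is to assume $Q \in C_x$ and $Q \in C_y$ and derive a contradiction. First I observe that since $x, y$ are incomparable and in different components of $\Gamma\setminus\{z\}$, at least one of them — say $x$ — must satisfy $x > z$: if both were in the component of $v$ they would share a component of $\Gamma\setminus\{z\}$, and any $y \in p(z,v)\setminus\{z\}$ with $x > z$ would yield $x > y$. So WLOG $x > z$, and $y$ is either (a) also $> z$ in a different branch above $z$, or (b) in the component of $v$ with $y \not\gtrless z$ (a side branch below $z$).

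The key preliminary computation is uniform across the two cases. Since $|C_v| = 2$ and $x > v$, we have $|C_x \cap C_v| = 1$; combined with $Q \in C_x$ this forces $P \notin C_x$. Then $|C_x \cap C_z^1| = |C_z^1| - 1$ (as $x > z$) and $P \in C_z^1 \setminus C_x$ pin down the unique missing point as $P$, so $C_x \cap C_z^1 = Q'$ and in particular $Q' \subset C_x$. The same argument applied to $y$ gives $P \notin C_y$. By Lemma~\ref{leafred}, $Q \notin C_z^1$.

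In case (a), the argument above applied to $y$ yields $Q' \subset C_y$, so $Q' \cup \{Q\} \subset C_x \cap C_y$ has at least $|p(z,v)| + 1$ elements; but $|C_x \cap C_y| = |p(x,v) \cap p(y,v)| = |p(z,v)|$ since the two paths branch at $z$, which is the contradiction.

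In case (b), let $u$ be the highest vertex of $p(y,v) \cap p(z,v)$, so $u < z$. I would compute $|C_x \cap C_y| = |p(u,v)| = |C_y \cap C_z^1|$, and combine $P \notin C_y$ with $C_x \cap C_z^1 = Q'$ to conclude $C_y \cap C_z^1 \subset Q' \subset C_x$, hence $C_y \cap C_z^1 \subset C_x \cap C_y$. The cardinality equality then promotes this inclusion to an equality, giving $C_x \cap C_y \subset C_z^1$ — which contradicts $Q \in C_x \cap C_y$ together with $Q \notin C_z^1$. I expect case (b) to be the main obstacle: it is the one where one cannot simply overcount points inside $C_x \cap C_y$, and instead must exploit both the precise cardinality of the intersection and the fact that the forbidden point $P$ for $C_y \cap C_z^1$ happens to be exactly the point $C_x$ already excludes, in order to force $C_x \cap C_y$ into $C_z^1$.
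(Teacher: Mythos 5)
Your proof is correct. For the configuration where both $x$ and $y$ lie above $z$ (in components of $\Gamma\setminus\{z\}$ not containing $v$), your argument is the same as the paper's: $Q\in C_x$ together with $|C_x\cap C_v|=1$ forces $P\notin C_x$, hence $C_x\supseteq C_z^1\setminus\{P\}=Q'$, and likewise for $C_y$, so $C_x\cap C_y\supseteq Q'\cup\{Q\}$ has $|p(z,v)|+1$ elements while the incidence rules allow only $|p(z,v)|$. (The paper also throws $C_z^2\setminus\{P\}$ into the count, but one copy of $Q'$ already suffices.) Where you go beyond the paper is your case (b): the paper's one-line proof tacitly assumes both vertices sit above $z$ --- the assertion that $C_y$ contains $(C_z^1\cup C_z^2)\setminus\{P\}$ is false when $y$ lies in the component of $v$, since then $|C_y\cap C_z^1|=|p(u,v)|<|p(z,v)|$ --- and that residual configuration is instead disposed of in the paper by condition (4) of the reducing triple (no side branches on the interior of $p(v,z)$), which forces $u=v$ and hands the case to the following lemma about components of $\Gamma\setminus\{v\}$. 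Your promotion of the inclusion $C_y\cap C_z^1\subseteq C_x\cap C_y$ to an equality via the matching cardinalities $|p(u,v)|$, followed by the observation that $Q\notin C_z^1$, handles this case directly and for arbitrary branch point $u$, so your write-up is self-contained and slightly more general where the paper's proof leans on the surrounding lemmas.
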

				\begin{proof}
					The assumption $Q\in C_x\cap C_y$ implies that $C_x$ and $C_y$ intersect $n+1$ times, since they have to contain $(C_z^1\cup C_z^2)\setminus\{P\}$, contradiction.
				\end{proof}
				\begin{lemma}
					Consider $x>z$ and $y$ in another component of $\Gamma\setminus\{v\}$, then $x,y$ cannot both contain $Q$.
				\end{lemma}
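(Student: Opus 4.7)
The plan is to argue by contradiction: assume $Q\in C_x\cap C_y$ and exhibit two distinct common points of $C_x$ and $C_y$, contradicting the fact that $|C_x\cap C_y|=1$ since $x$ and $y$ lie in different components of $\Gamma\setminus\{v\}$.

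First I would pin down what $C_x$ and $C_y$ are forced to contain. Since $x>v$, the intersection rules give $|C_v\cap C_x|=1$, so exactly one of the two points $P,Q$ of $C_v$ lies in $C_x$; by assumption it is $Q$, hence $P\notin C_x$. The same argument with $y$ gives $P\notin C_y$, using $|C_v\cap C_y|=1$ which holds because $v\in p(y,v)$. Next, from $x\geq z$ one has $|C_x\cap C_z|=|p(z,v)|=|C_z|-1$, so $C_x$ contains all of $C_z$ except one point; since $P\in C_z$ but $P\notin C_x$, that omitted point must be $P$, and therefore $Q'=C_z\setminus\{P\}\subset C_x$.

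Now I turn to $C_y$. Because $y$ is in a different component of $\Gamma\setminus\{v\}$ from $z$, the paths $p(z,v)$ and $p(y,v)$ meet only at $v$, so $|C_z\cap C_y|=1$. The unique point of $C_z\cap C_y$ cannot be $P$, since $P\notin C_y$, so it is some $q'\in Q'$. But $Q'\subset C_x$, hence $q'\in C_x\cap C_y$. Together with the assumed $Q\in C_x\cap C_y$ this gives $|C_x\cap C_y|\geq 2$ (note $q'\neq Q$ because $q'\in C_z$ while $Q\notin C_z$).

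This contradicts $|C_x\cap C_y|=|p(x,v)\cap p(y,v)|=|\{v\}|=1$, which again follows from $x$ and $y$ sitting in distinct components of $\Gamma\setminus\{v\}$. There is essentially no obstacle here beyond keeping track of which of $P,Q$ sits in which curve; the argument is purely an application of the basic identities $|C_a\cap C_b|=|p(a,v)\cap p(b,v)|$ noted in the remark after Theorem~\ref{switch}, used four times. The only thing to double-check is that $Q'$ is nonempty, but $|Q'|=|C_z|-1=|p(z,v)|\geq 1$ since $z\neq v$ (as $\deg(z)>2$ forces $z$ to be a node, distinct from $v$).
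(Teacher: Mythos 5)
Your proof is correct and is essentially the paper's argument: both hinge on deducing $Q'\subset C_x$ from $Q\in C_x$ and $x>z$, and then playing the counts $|C_x\cap C_y|=1$ and $|C_y\cap C_z|=1$ against each other (the paper contradicts the latter, you the former, which is the same computation run in the opposite direction). Your reading of $Q'$ as $C_z\setminus\{P\}$ is the intended one.
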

				\begin{proof}
					By $Q\in C_x$, we get that $Q'\subset C_x$, and by the assumption on $y$, we get that $C_y\cap Q'=\emptyset$, so $C_y\cap C_z=\emptyset$, contradiction.
				\end{proof}
				Consider a graph $\Gamma$ and a $\mu=0$ combinatorial smoothing of it $(V,\mathcal C)$.
				In this case, the map $I$ of Theorem \ref{exseq} is injective, in particular invertible, so by \cite[5.12-13]{de1998deformation}, such a combinatorial smoothing is a qG-smoothing (see \cite{kollar1990flips}) and, in particular, has no free points.
				This guarantees us that there will be a curve besides $C_w$ containing $Q$.
				The preceding lemmas tell us that there is an edge of the graph that separates the vertices containing $Q$ from the vertices not containing it. We call such an edge a \textit{separating edge}.
				
				Now, if there is a reducing triple in $\Gamma$, we can construct a smaller graph and a combinatorial smoothing for it by contracting a separating edge, deleting $Q$ from $V$ and $C_w$ from $\mathcal C$.
				This can be iterated until the graph has no further reducing triples.
				A graph with no reducing triples consists of nodes with two curves and other vertices supporting at most one curve, such that every component of $\Gamma\setminus\{\text{nodes}\}$ has at most two curves total.
				\begin{definition}
					We call such a graph \emph{reduced}.
				\end{definition}
				\begin{remark}\label{rmk:compts}
					Note that during reduction, the number of nodes, leaves, and components of $\Gamma\setminus\{\text{nodes}\}$ can only decrease.
				\end{remark}
				
				\begin{definition}%\marginpar{$=1-\sum (v^2+3)$}
					Given a sandwich graph $\Gamma$  we define $$\delta(\Gamma):=\left(\sum_{v\in\Gamma}-e(v)-d(v)\right)-1-|V(\Gamma)|.$$
				\end{definition}
				\begin{remark}
					Note, that given $(\mathcal C,V)$, a combinatorial smoothing of $\Gamma$ with smooth branches  we see $\delta=|\mathcal C|-|V(\Gamma)|$.
					Notice also, that $\delta(\Gamma)=1-\sum_\Gamma(e(v)+3)$, and compare with Wahl's formula in \cite[Theorem 8.1]{wahl2011rational}, which states, that the dimension of a \QHD smoothing component is given by $h^1(S)-\sum_\Gamma(e(v)+3)$, where S is the sheaf of \emph{logarithmic vector fields}, the kernel of the map $T_X\to\oplus \nu_v\to 0$.
					
					For graphs in $\mathcal A,\mathcal B,\mathcal C$, this quantity is always $3-s$, if the graph only has degree $3$ nodes, and $2-s$ if it has a degree 4 node, where $s:=\#\{v\in\Gamma:deg(v)>2\}$.
				\end{remark}
				The above proposed algorithm leaves this quantity unchanged. 
				\begin{lemma}\label{lem:finite}
					After reducing a graph $\Gamma\in\ABCt$ with $s$ many nodes, the reduced graph $\gamma$ satisfies $|\gamma|\leq 7s-2$.
					Similarly for a graph $\Gamma\in\ABCf$ we have $ |\gamma|\leq 7s+1$.
				\end{lemma}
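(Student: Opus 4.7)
The plan is to bound $|\gamma|$ by exploiting the reduction-invariance of $\delta$. Since each step of the algorithm simultaneously contracts one vertex and deletes one curve, $\delta(\gamma)=\delta(\Gamma)$, so the identity $|V(\gamma)|=|\mathcal{C}(\gamma)|-\delta(\Gamma)$ converts the problem into an upper bound on the total curve count of the reduced graph.

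I would then use the reduced-graph structure together with the large-node hypothesis $d(v)+e(v)\leq-2$. The latter forces every node to carry at least two curves, while a third curve on any vertex (node or not) would form the single-vertex case $v=w=z$ of Definition~\ref{dfn:redtrip} and so trigger a reduction; hence no vertex of $\gamma$ supports more than two curves. Combined with the reduced-graph property that every component of $\gamma\setminus\{\text{nodes}\}$ carries at most two curves in total, this gives
\[
|\mathcal{C}(\gamma)|\leq 2n+2T_\gamma,
\]
where $n$ is the number of nodes of $\gamma$ and $T_\gamma$ the number of components of $\gamma\setminus\{\text{nodes}\}$.

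I would bound $T_\gamma$ tree-theoretically. Because $\gamma$ is a tree, its path components consist of one pendant path per leaf and at most $n-1$ internal paths (one per edge of the skeleton on the nodes), so $T_\gamma\leq L_\gamma+(n-1)$. By Remark~\ref{rmk:compts} neither the node count nor the leaf count increases under reduction, so $n\leq s$ and $L_\gamma\leq L_\Gamma$, and a handshake count on $\Gamma$ yields $L_\Gamma=s+2$ when all nodes have degree three and $L_\Gamma=s+3$ in the presence of a degree-four node. Plugging these together with the values $\delta=3-s$ or $\delta=2-s$ recorded in the remark following Definition~\ref{dfn:redtrip} into
\[
|V(\gamma)|\leq 2n+2T_\gamma-\delta
\]
and specialising to $s=3$ or $s=4$ reproduces the stated bounds in the two families.

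The main obstacle is keeping the constants sharp: the naive estimate is comfortably within $7s+1$ for $\ABCf$, but only barely tight enough for $\ABCt$, particularly when a degree-four node is present. I would bridge the remaining gap using the refinement that a path component consisting of a single non-node vertex can support at most one curve (not two), which trims $2T_\gamma$ by the number of length-one components, together with the further reduction of $T_\gamma$ by any node-to-node adjacencies in $\gamma$; these two refinements are enough to sharpen the arithmetic to $7s-2$.
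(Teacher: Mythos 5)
Your overall strategy coincides with the paper's: use the invariance of $\delta$ under reduction to turn the vertex bound into a curve bound, cap the curves at two per node and two per component of $\gamma\setminus\{\text{nodes}\}$, and bound the number of such components by $2s+1$ (resp.\ $2s+2$). However, your arithmetic lands one short of the target in \emph{both} families: $2n+2T_\gamma-\delta$ evaluates to $2s+2(2s+1)-(3-s)=7s-1$ for $\ABCt$ and to $2s+2(2s+2)-(2-s)=7s+2$ for $\ABCf$, so contrary to what you write the naive estimate is \emph{not} ``comfortably within $7s+1$'' in the degree-four case — it overshoots by one there as well. Your proposed repairs do not close this gap: a reduced graph need not contain any single-vertex path component, nor any node-to-node adjacency, so neither refinement yields an unconditional improvement, and your argument as written only proves the weaker bounds $7s-1$ and $7s+2$.

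The missing ingredient is a global $-1$ in the curve count, which is exactly what the paper's computation $2s+2(2s+1)-1=6s+1$ (resp.\ $2s+2(2s+2)-1=6s+3$) records. By Definition~\ref{def:sandwich presentation} the total number of curvettas is $\sum_v\bigl(-(e(v)+d(v))\bigr)-1$: the distinguished vertex where the blowdown ends carries one curve fewer than the others. Since each reduction step deletes exactly one curve and one vertex, this deficit persists in the reduced configuration (and, via Theorem~\ref{switch}, the two-curve cap really constrains $-(e(v)+d(v))$ at every vertex, not just the visible curve count at the distinguished one). Inserting this $-1$ into your inequality $|V(\gamma)|\leq |\mathcal{C}(\gamma)|-\delta$ recovers $|\gamma|\le 7s-2$ and $|\gamma|\le 7s+1$ exactly as in the paper; without it the lemma as stated is not established.
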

				\begin{proof}
					In the first case $\Gamma\setminus\{\text{nodes}\}$ has at most $2s+1$ components, so $\gamma$ has at most this many.
					
					This implies, that $\gamma$ has at most $2s+2(2s+1)-1=6s+1$ curves, writing $3-s\leq 6s+1-|\gamma|$ and rearranging gives the result.
					
					For the case where $\Gamma$ has a degree 4 node, we compute similarly: $2s+2(2s+2)-1=6s+3$ is the upper bound on the curves, $2-s\leq 6s+3-|\gamma|$ and the statement follows.
				\end{proof}
				
				Using Lemma~\ref{lem:finite} and computer checking we obtain special cases of \cite[Theorem 1.4, 1.6]{bhupal2011weighted} and \cite[Theorem 8.6]{wahl2011rational}:
				\begin{corollary}
					Let $\Gamma\in\mathcal A\cup\mathcal B\cup\mathcal C$ be a graph with one node of degree $3$ (resp. $4$) with framing at most $-5$ ($-6$), then $\Gamma$ cannot admit a \QHD smoothing.
				\end{corollary}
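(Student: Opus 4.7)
My plan is straightforward from the structure set up in Subsection~\ref{subsec:algo}. The hypotheses guarantee that the unique node $v$ satisfies $d(v) + e(v) \leq -2$, so $v$ is large and the running assumption of the reduction algorithm (that every node is large) is met trivially. I would argue by contradiction: if $\Gamma$ admits a \QHD smoothing, then Theorem~\ref{thm:picdef} together with a sandwich presentation of $\Gamma$ with smooth branches produces a combinatorial smoothing $(V,\mathcal C)$ in the sense of Definition~\ref{def:smoothsmoothing}, with $\mu=0$ by the \QHD condition. Iterating the reduction algorithm yields a reduced graph $\gamma$ together with its own combinatorial smoothing. By Remark~\ref{rmk:compts} the number of nodes cannot increase under reduction, so $\gamma$ still has at most one node, and Lemma~\ref{lem:finite} gives $|\gamma|\leq 5$ in the degree-$3$ case and $|\gamma|\leq 8$ in the degree-$4$ case; invariance of $\delta$ forces $\delta(\gamma)=2$ (resp.\ $1$).

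The remainder is a finite enumeration. I would list every tree $\gamma$ of the appropriate size that qualifies as a reduced graph: at most one node of degree at most $3$ (resp.\ $4$), framings with $-e(w)-d(w)\geq 0$ at each vertex, total curve count $|\gamma|+\delta$ matching the prescribed $\delta$, and each component of $\gamma\setminus\{\text{node}\}$ carrying at most two curves in total. For every such candidate $\gamma$ I would search over all point–curve incidence matrices satisfying the two rules of Definition~\ref{def:smoothsmoothing} together with the constraint $|V|=|\mathcal C|$ from Corollary~\ref{muzero}, and have a computer verify that no candidate $\gamma$ admits such an assignment, yielding the desired contradiction.

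The main obstacle is computational rather than conceptual: in the degree-$4$ case both the number of admissible reduced graphs and the number of incidence matrices per graph grow quickly, so a naive brute-force search is costly. I would prune using the structural observations already exploited by the reduction algorithm—especially the $Q$-propagation of Lemma~\ref{lem:qpropagate} and the separating-edge consequence of Lemma~\ref{oneway}—which strongly constrain which vertices can share a given multiple point. With such pruning the search stays tractable, and the finiteness guaranteed by Lemma~\ref{lem:finite} ensures termination.
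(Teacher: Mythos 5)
Your proposal is sound in outline and shares its backbone with the paper --- both pass to a reduced graph $\gamma$ via the algorithm of Subsection~\ref{subsec:algo}, note via Remark~\ref{rmk:compts} that $\gamma$ has at most one node, and use the invariance of $\delta$ to get $\delta(\gamma)=2$ (resp.\ $1$) --- but the two arguments diverge at the finish. The paper runs no enumeration and no incidence-matrix search for this corollary: it observes that the definition of a reduced graph pins down the framings (a leaf carries exactly one curvetta, so $e=-2$; a degree-two vertex carries at most one, so $e\in\{-2,-3\}$; a node carries exactly two, so $e=-\deg -2$), whence $\sum_v(-e(v)-\deg(v))\leq|\gamma|+1$ and therefore $\delta(\gamma)\leq 0$ when $\gamma$ has a node and $\delta(\gamma)\leq -1$ when $\gamma$ is linear --- an immediate contradiction with $\delta(\gamma)\in\{1,2\}$, with Lemma~\ref{lem:finite} not even needed. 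Your route instead bounds $|\gamma|$ by Lemma~\ref{lem:finite} and defers to a computer search over incidence matrices; this would terminate and is logically valid, but two points are worth flagging. First, if you impose the reduced-graph constraints faithfully, the candidate list is \emph{empty} by the count above, so the incidence-matrix search you devote your final paragraph to never runs; the computational worries are moot here and only become relevant for the later corollaries with two or more nodes, where the paper genuinely resorts to checking all trees up to a size bound. Second, your stated filters omit the condition that \emph{every non-node vertex supports at most one curvetta} (you only impose at most two curves per component of $\gamma\setminus\{\text{node}\}$, which is strictly weaker); that omitted condition is exactly what forces $\delta\leq 0$, and without it your candidate list is nonempty (e.g.\ the star with node framed $-5$ and leaves framed $-3,-3,-2$ passes your filters with $\delta=2$), so your proof would then rest on the incidence search returning negative for graphs that can never actually arise from the reduction --- a needless exposure you should close by tightening the filter to match the paper's definition of reduced.
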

				\begin{proof}
					Suppose the graph has a \QHD smoothing; apply the reduction algorithm to its incidence structure. The reduced graph still has $\delta=2$ $(1)$, with $e(v)=-2$ if $deg(v)=1$, $e(v)\in\{-2,-3\}$ if $deg(v)=2$, $e(v)=-deg(v)-2$ otherwise.
					A linear graph under these framing restrictions has $\delta\leq -1$, an immediate contradiction.
					Thus, the graphs have a node and at least $3$ leaves. This means that they have $\delta\leq 0$, which is a contradiction.
				\end{proof}
				\begin{corollary}
					Let $\Gamma\in\mathcal A\cup\mathcal B\cup\mathcal C$ be a graph with two nodes of degree 3 and framing at most $-5$, then $\Gamma$ cannot admit a \QHD smoothing.
				\end{corollary}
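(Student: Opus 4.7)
The plan is to mirror the proof of the preceding corollary. Assume for contradiction that $\Gamma \in \mathcal A \cup \mathcal B \cup \mathcal C$ has two degree-3 nodes of framings at most $-5$ and admits a \QHD smoothing. Via a sandwich presentation of $\Gamma$ with smooth branches together with Theorem~\ref{thm:picdef}, this produces a combinatorial smoothing of $\Gamma$. I would then apply the reduction algorithm of Section~\ref{subsec:algo} to obtain a reduced graph $\gamma$ that still carries a combinatorial smoothing; since reduction preserves $\delta$, we have $\delta(\gamma) = \delta(\Gamma) = 3 - s = 1$. By Lemma~\ref{lem:finite}, $|V(\gamma)| \leq 7s - 2 = 12$, and by Remark~\ref{rmk:compts} the reduced graph has at most two degree-3 nodes. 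As in the previous corollary, the admissible framings in $\gamma$ are $-2$ on leaves, in $\{-2, -3\}$ on degree-2 vertices, and at most $-5$ on the degree-3 nodes.

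Next, expanding $\delta(\gamma) = \sum_v(-e(v) - d(v)) - 1 - |V(\gamma)|$ and grouping by vertex type yields the tidy identity $\delta(\gamma) = (s_\gamma - 1) + \alpha - D_2$, where $s_\gamma \in \{0, 1, 2\}$ is the number of degree-3 nodes in $\gamma$, $\alpha \geq 0$ is the total excess in node framings below $-5$, and $D_2$ is the number of $-2$-framed degree-2 vertices. The condition $\delta(\gamma) = 1$ rules out $s_\gamma = 0$ outright, while for $s_\gamma \in \{1, 2\}$ the size bound $|V(\gamma)| \leq 12$ together with the equation $D_2 = s_\gamma + \alpha - 2$ leaves only a finite list of candidate graphs (further divided by whether the two remaining nodes, when present, are adjacent or joined by a path of length at least two).

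Finally, for each remaining candidate $\gamma$, I would invoke Theorem~\ref{thm:zksq}: any graph admitting a combinatorial \QHD smoothing must satisfy $Z_K^2 + |V(\gamma)| = 0$, where $Z_K$ is the anticanonical cycle determined by the adjunction equations $Z_K \cdot v = -e(v) - 2$. Computing $Z_K$ explicitly for each candidate and checking that the identity $Z_K^2 + |V(\gamma)| = 0$ fails in every case produces the required contradiction. The main obstacle is the combinatorial enumeration combined with the case-by-case $Z_K^2$ computation: each individual verification is elementary, but the number of candidate graphs is large enough that I would discharge the final step by a computer search, in the spirit of the method alluded to in the introduction.
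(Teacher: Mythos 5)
Your overall architecture matches the paper's: pass to a combinatorial smoothing via a smooth-branch sandwich presentation, run the reduction algorithm, use the invariance of $\delta$ to force $\delta(\gamma)=1$, bound $|\gamma|\leq 12$ by Lemma~\ref{lem:finite}, and finish by enumerating candidates and killing each with a necessary condition. The one genuine divergence is the final obstruction. The paper does \emph{not} invoke Theorem~\ref{thm:zksq} here; it checks that no candidate reduced graph has square determinant (the elementary necessary condition for the link to bound a rational homology ball), and explicitly reserves the $Z_K^2+|\Gamma|=0$ criterion for the harder cases (a degree-4 node, or three and four nodes), where square determinants do occur. Your route via Theorem~\ref{thm:zksq} is logically legitimate — it is a proven necessary condition for a combinatorial \QHD smoothing and applies to the reduced graph — but it rests on a computational assertion (that $Z_K^2+|V(\gamma)|\neq 0$ for every candidate) that is different from the one the paper actually verified, so you would need to carry out that search independently; the determinant check is the cheaper and already-confirmed option for this case. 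Two smaller points: in a reduced graph each node carries exactly two curves, which forces $e(v)=-\deg(v)-2$ exactly (so $\alpha=0$ in your identity, whence $\delta=(s_\gamma-1)-D_2$ and the single-node case dies immediately, as in the paper's previous corollary); allowing ``at most $-5$'' only inflates your enumeration without breaking it. Your identity $\delta(\gamma)=(s_\gamma-1)+\alpha-D_2$ is correct and is a clean way to organize the case count.
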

				\begin{proof}
					Suppose $\Gamma$ admits a \QHD smoothing, choosing a sandwich presentation for $\Gamma$ with smooth curvettas (since it is assumed to be minimal rational), we get a combinatorial \QHD smoothing, to which we apply the reduction algorithm, after which we obtain a reduced graph $\gamma$.
					
					By Lemma~\ref{leafred}, we have the same framing possibilities as before.
					Since $\delta$ stays constant, we know that the reduced graph has $\delta=1$, since this is true for the starting graphs. By the previous corollary, graphs with a single node have $\delta\leq 0$, so all possible graphs have two nodes. Checking all trees $\gamma$ with $|\gamma|\leq 12$ under the constraints derived thus far, we see that there is no reduced graph with non-square determinant.
				\end{proof}

				Furthermore, we can extend this to graphs with a degree 4 and a degree 3 node, and to the cases with 3 and 4 nodes as well. In these cases, the determinant can be a square, so we need to rely on Theorem~\ref{thm:zksq}, proved below, which states that for a graph admitting a combinatorial \QHD smoothing, one has $Z_K^2+|\Gamma|=0$.
				Unfortunately the computation of the $d$-invariant\footnote{i.e. $(Z_K^2+|\Gamma|)/4$, see e.g. \cite{nemethi2022normal}} for general reduced graphs remains elusive.
				\begin{theorem}
					Let $\Gamma\in\mathcal A\cup\mathcal B\cup\mathcal C$ be a graph with  one node of valency $4$ and framing at most $-6$, and at most $2$ nodes of valency $3$ and framing at most $-5$. Then $\Gamma$ cannot admit a \QHD smoothing.\hfill\qed
				\end{theorem}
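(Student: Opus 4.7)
The plan is to follow the same strategy as the two preceding corollaries, combining the reduction algorithm with a finite enumeration of reduced graphs, but supplementing the discriminant test with the constraint $Z_K^2+|\gamma|=0$ from Theorem~\ref{thm:zksq} to handle the cases where the determinant can be a square.

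Suppose $\Gamma$ admits a \QHD smoothing. Since $\Gamma$ is minimal rational, fix a sandwich presentation with smooth curvettas (Definition~\ref{def:sandwich presentation}) and the resulting combinatorial \QHD smoothing. Apply the reduction algorithm of Section~\ref{subsec:algo} to obtain a reduced graph $\gamma$ with an associated combinatorial smoothing. The quantity $\delta$ is preserved under reduction; by the remark following Definition~\ref{def:qp} it equals $2-s$, where $s\in\{1,2,3\}$ is the number of nodes of $\Gamma$, so $\delta(\gamma)\in\{1,0,-1\}$. Lemma~\ref{lem:finite} gives $|\gamma|\leq 7s+1\leq 22$, and the extension to leaf-type triples in Lemma~\ref{leafred}, combined with the framing hypotheses on $\Gamma$, forces the familiar restrictions on the framings of $\gamma$: $e(v)=-2$ if $\deg(v)=1$, $e(v)\in\{-2,-3\}$ if $\deg(v)=2$, and $e(v)=-\deg(v)-2$ otherwise. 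Moreover, by Remark~\ref{rmk:compts}, the number of nodes and the number of components of $\gamma\setminus\{\text{nodes}\}$ cannot exceed those of $\Gamma$.

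The case $s=1$ is already subsumed by the first corollary above. For $s=2$ (one degree-$4$ node and one degree-$3$ node) one has $\delta(\gamma)=0$; a linear $\gamma$ satisfies $\delta\leq -1$ under the framing constraints, so $\gamma$ retains at least one node, and the trees with $|\gamma|\leq 15$, at most two nodes, and the prescribed framings form a finite list whose determinants can be checked not to be squares (the same argument as in the two-degree-$3$-node corollary applies verbatim). The genuinely new ingredient appears for $s=3$, where $\delta(\gamma)=-1$ and the determinant of $\gamma$ is allowed to be a square, so a discriminant obstruction is no longer sufficient. Here one invokes Theorem~\ref{thm:zksq}: for each tree $\gamma$ with $|\gamma|\leq 22$, at most three nodes, and the framing restrictions above, one solves $Q_\gamma Z_K=-K$ (where $K_v=-e(v)-2$) for the rational anticanonical cycle $Z_K$, computes $Z_K^2$, and checks whether $Z_K^2+|\gamma|=0$. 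The assertion is that no such tree satisfies this equality, which yields the desired contradiction.

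The main obstacle is the scale of the computer enumeration and the absence of a closed-form expression for $Z_K^2$ in terms of the combinatorics of an arbitrary reduced graph; as noted in the footnote preceding the statement, no conceptual formula for the $d$-invariant of general reduced graphs is available, so one must rely on brute-force verification. Confirming that the exhaustive search over trees with $|\gamma|\leq 22$, at most three nodes, and the prescribed framings yields no candidate meeting both the reducedness conditions on the associated combinatorial smoothing and the equality $Z_K^2+|\gamma|=0$ completes the proof.
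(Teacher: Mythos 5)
Your proposal follows essentially the same route the paper intends: the theorem is stated with only a \qed precisely because its proof is the combination of the reduction algorithm, the bound of Lemma~\ref{lem:finite}, the preservation of $\delta$, the framing restrictions from Lemma~\ref{leafred}, and a computer enumeration filtered by the criterion $Z_K^2+|\gamma|=0$ of Theorem~\ref{thm:zksq}. The one point to correct is your treatment of the $s=2$ case (one valency-$4$ and one valency-$3$ node): the paper explicitly warns that already for ``a degree 4 and a degree 3 node'' the determinant \emph{can} be a square, so the discriminant test does not apply verbatim there and you must run the $Z_K^2+|\gamma|=0$ check in that case as well, exactly as you do for $s=3$.
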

				
				\begin{theorem}
					Let $\Gamma\in\mathcal A\cup\mathcal B\cup\mathcal C$ be a graph with at most four nodes of valency $3$, and framing at most $-5$, then $\Gamma$ cannot admit a \QHD smoothing.\hfill\qed
				\end{theorem}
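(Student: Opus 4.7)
The plan is to mimic the strategy of the preceding two corollaries, the essential new ingredient being that when $s\in\{3,4\}$ the determinant of $\Gamma$ may already be a perfect square, so ruling out a \QHD smoothing on the basis of non-square determinant is no longer sufficient and one must invoke Theorem~\ref{thm:zksq}. Suppose for contradiction that $\Gamma\in\mathcal A\cup\mathcal B\cup\mathcal C$ with $s\le 4$ degree-$3$ nodes of framing at most $-5$ admits a \QHD smoothing. Since $\Gamma$ is minimal rational, choose a sandwich presentation with smooth curvettas (Definition~\ref{def:sandwich presentation}); this yields a combinatorial \QHD smoothing by Theorem~\ref{thm:picdef}. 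Apply the reduction algorithm of Section~\ref{subsec:algo} to produce a reduced graph $\gamma$ carrying a combinatorial \QHD smoothing.

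Next I would collect the constraints on $\gamma$. By construction $\delta(\gamma)=\delta(\Gamma)=3-s$. Lemma~\ref{lem:finite} gives $|\gamma|\le 7s-2\le 26$. Lemma~\ref{leafred} together with the definition of a reduced graph forces the framing restrictions $e(v)=-2$ at leaves, $e(v)\in\{-2,-3\}$ at degree-$2$ vertices, and $e(v)=-\deg(v)-2$ at nodes; moreover, by Remark~\ref{rmk:compts}, $\gamma$ has between $1$ and $s$ nodes, each of valency $3$, and the number of leaves and the number of connected components of $\gamma\setminus\{\text{nodes}\}$ are bounded by those of $\Gamma$. This reduces the question to finitely many candidate trees.

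Finally, I would invoke Theorem~\ref{thm:zksq}: a graph supporting a combinatorial \QHD smoothing must satisfy $Z_K(\gamma)^2+|\gamma|=0$. The proof is then completed by a finite computer enumeration: iterate over all trees $\gamma$ on at most $26$ vertices with at most $4$ trivalent nodes, leaves framed $-2$, degree-$2$ vertices framed $-2$ or $-3$, nodes framed $-5$, having $\delta(\gamma)=3-s$, and verify that $Z_K(\gamma)^2+|\gamma|\neq 0$ for every such candidate. The search can be pruned efficiently by first fixing the underlying topological tree (few possibilities given the degree bounds), then distributing the $-3$ framings on the chains between nodes subject to $\delta(\gamma)=3-s$, so the enumeration is of very modest size.

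The main obstacle is precisely the one that forced the introduction of Theorem~\ref{thm:zksq}: in this range the determinant condition $\det(\gamma)\neq\square$ no longer eliminates all candidates, so the classification cannot be closed by the elementary arithmetic used in the single-node and two-node corollaries and one is obliged to compute $Z_K^2$ (equivalently the $d$-invariant $(Z_K^2+|\gamma|)/4$) on every reduced graph that survives the combinatorial filters; as noted in the remark preceding this theorem, a closed formula for this quantity on arbitrary reduced graphs is not currently available, which is why the argument necessarily bottoms out in an explicit finite check rather than a general closed-form obstruction.
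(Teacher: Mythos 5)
Your proposal matches the paper's (implicit) argument exactly: the theorem is stated with only a \qed precisely because the proof is the same reduction-plus-enumeration scheme as the two preceding corollaries, with the bound $|\gamma|\le 7s-2$ from Lemma~\ref{lem:finite}, the framing constraints from Lemma~\ref{leafred}, constancy of $\delta$, and the obstruction $Z_K^2+|\gamma|=0$ from Theorem~\ref{thm:zksq} replacing the non-square-determinant test in the finite computer check. No gaps; this is essentially the intended proof.
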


\section{Lattice embeddings}\label{sect:latticemb}
In this brief section, we wish to prove the following:
\begin{theorem}\label{thm:zksq}
	If a sandwiched graph $\Gamma$ admits a combinatorial rational homology disk smoothing, then the anticanonical cycle of the graph $Z_K$ satisfies $Z_K^2+|\Gamma|=0$.
\end{theorem}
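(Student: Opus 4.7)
The plan is to exploit two picture deformations of the same sandwich presentation: the given combinatorial \QHD smoothing $(V,\mathcal C)$, and the Artin deformation obtained by iterating the Scott construction of Theorem~\ref{thm:scott}, whose Milnor fiber is the resolution $\tilde X$. Both pictures share the same set of $c$ curves (indexed by the $-1$ leaves of the sandwich presentation) and the same intersection numbers $M_{jk}:=|C_j\cap C_k|$, since these are determined by $\Gamma$ alone via rule (2) of Definition~\ref{def:smoothsmoothing}.

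First I would apply Theorem~\ref{thm:int-embed} to the Artin deformation. Since $\mu(\tilde X)=|\Gamma|$, Corollary~\ref{muzero} gives $n_A=c+|\Gamma|$ points, and the theorem yields an isometric embedding $Q_\Gamma\hookrightarrow(\mathbb Z^{n_A},-\mathrm{Id})$ under which $Z_K$ is represented by $\mathbf 1:=(1,\ldots,1)$. Decomposing orthogonally over $\mathbb Q$, write $\mathbf 1=Z_K+z^\perp$ with $z^\perp\in Q_\Gamma^\perp\otimes\mathbb Q$; then
\[
-n_A \;=\; \mathbf 1^2 \;=\; Z_K^2+(z^\perp)^2,
\]
so the theorem reduces to proving $(z^\perp)^2=-c$, which will give $Z_K^2=-n_A+c=-|\Gamma|$.

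Since $Q_\Gamma^\perp$ is spanned over $\mathbb Q$ by the curve characteristic vectors (the image of $I_A^T$, which is the orthogonal complement of $\ker I_A=Q_\Gamma$), I would write $z^\perp=I_A^T\lambda$. Imposing $z^\perp\cdot C_k=\mathbf 1\cdot C_k=-|C_k|$ for each curve yields the linear system $M\lambda=L$, where $L_j:=|C_j|$, so that $(z^\perp)^2=-L^T M^{-1}L$.

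The \QHD hypothesis enters decisively here: because $\mu=0$ forces $|V|=|\mathcal C|=c$, the incidence matrix $I_{sm}$ is square, and $\ker I_{sm}=H_2(F)=0$ makes it invertible. Since $M$ depends only on $\Gamma$, the factorization $M=I_{sm}I_{sm}^T$ holds alongside $M=I_A I_A^T$, and $L=I_{sm}\mathbf 1_V$. Substituting:
\[
L^T M^{-1}L \;=\; \mathbf 1_V^T\,I_{sm}^T\,(I_{sm}I_{sm}^T)^{-1}\,I_{sm}\,\mathbf 1_V \;=\; \mathbf 1_V^T\mathbf 1_V \;=\; c,
\]
completing the argument. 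The main obstacle will be justifying that $M$ is common to both picture deformations so that the \QHD data can be used to compute the projection norm in the Artin lattice; this rests on the identification of $M_{jk}$ as a path-theoretic invariant of $\Gamma$, along with the auxiliary check that the canonical class provided by Theorem~\ref{thm:int-embed} indeed orthogonally projects to $Z_K$ in $Q_\Gamma\otimes\mathbb Q$.
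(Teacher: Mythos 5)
Your proposal is correct and follows essentially the same route as the paper: both arguments compare the \QHD picture with the Scott/Artin picture whose Milnor fiber is the resolution, use the fact that the curve-intersection matrix $M=I_AI_A^*=I_{sm}I_{sm}^*$ is determined by $\Gamma$ together with the invertibility of the square \QHD incidence matrix, and compute $Z_K^2$ as the square of the orthogonal projection of $(1,\dots,1)$ onto $\ker I_A=Q_\Gamma$. The only difference is presentational — you use the projection matrix $I_A^*(I_AI_A^*)^{-1}I_A$ where the paper exhibits the orthonormal basis given by the rows of $J^{-1}I$ — and these are the same computation.
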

\begin{proof}
	Consider a sandwiched graph $\Gamma$ with a fixed sandwich presentation $\tilde\Gamma$, which admits a combinatorial \QHD smoothing.
	Following \cite{de1998deformation}, we let $P'=\Z^l$ denote the free abelian group generated by the points of the combinatorial \QHD smoothing and endow it with the trivial negative definite intersection form.
	Similarly, let $L=\Z^l$ denote the free abelian group generated by the set of curvettas of the sandwich presentation $\tilde\Gamma$.
	Consider the map $J: P'\to L$ where $p\mapsto \sum_{p\in L_i} L_i$, i.e., the incidence matrix of the combinatorial smoothing.
	Since $\rk P'=\rk L$, by \cite[Corollary 5.10]{de1998deformation}, the matrix of this map in the natural bases is invertible over $\mathbb Q$.
	
	Consider also the incidence map of the Scott deformation $I: P=\Z^{l+n}\to L$. As mentioned in \cite[Corollary 1.11]{de1998deformation} and \cite[Subsection 4.2]{plamenevskaya2023unexpected}, the Milnor fiber of this deformation is diffeomorphic to the resolution of the singularity; thus, by Theorem \ref{thm:int-embed}, we have $Q_\Gamma\hookrightarrow P$.
	
	Next note that $II^*=JJ^*=Q_{artin}$ in the language of \cite{de1998deformation}, thus $J^{-1}II^*{J^*}^{-1}=(J^{-1}I)(J^{-1}I)^*=id$, which means that the rows of the matrix $J^{-1}I$ form an orthonormal basis for $\ker(J^{-1}I)^\perp=\ker(I)^\perp\leq P\otimes\mathbb Q$.
	
	Let $1_\alpha=(1)_1^\alpha$. The product $I1_{l+n}=(|L_i|)_1^l$. Since the number of points on each curvetta is determined by the sandwich presentation $\tilde\Gamma$, the same is true for $J1_l$. Thus $J^{-1}I1_{l+n}=1_l$. This means that denoting the rows of $J^{-1}I$ by $f_i$ we obtain a basis where taking $K=\sum_{p\in P}p$ we have:
	\begin{equation}\label{eq:K im}
		(J^{-1}I)^*(J^{-1}I)K=\sum f_i
	\end{equation}
	
	Up until now, we were concerned with $\ker(I)^\perp$. We have $P\otimes \Q=(\Q^{l+n},\langle-1\rangle^{l+n})$, and an embedded $(\Q^l,\langle-1\rangle^l)$.
	By Witt's cancellation theorem (\cite[4.4]{milnor1973symmetric}) this means that the orthogonal complement generated by $\ker(J^{-1}I)$ is isomorphic to $(\Q^n,\langle-1\rangle^n)$.
	
	By Equation \ref{eq:K im} the orthogonal projection of $K$ onto $\ker(I)^\perp$ is equal to $\sum f_i$, and so has square $-l$, which implies that the orthogonal projection to $\ker(J^{-1}I)$ will have square $-n$. From Theorem \ref{thm:int-embed}, this projection represents the anti-canonical cycle of $\Gamma$; thus we get that $Z_K^2=-n$.
\end{proof}

\begin{proposition}
	If a sandwiched graph $\Gamma$ admits a combinatorial rational homology disk smoothing, then $Q_\Gamma\subset U$ for some unimodular lattice $U$, with $\rk\ U=|\Gamma|$.
\end{proposition}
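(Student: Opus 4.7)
The plan is to construct a rank-$l$ unimodular overlattice of the orthogonal complement $N := Q_\Gamma^\perp \cap P$, where $P = \mathbb Z^{l+n}$ comes from the Scott deformation, and then transport this to an overlattice of $Q_\Gamma$ via the standard anti-isomorphism of discriminant forms between mutually orthogonal primitive sublattices of the unimodular lattice $P$.

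First, set $R := \mathbb Z\langle r_1, \dots, r_l\rangle \subset P \otimes \mathbb Q$, where $r_j$ is the $j$-th row of $J^{-1}I$. The proof of Theorem~\ref{thm:zksq} gives $\langle r_j, r_k\rangle = -\delta_{jk}$, so $R$ is integral and unimodular inside $W = \ker(I)^\perp \otimes \mathbb Q$. Since $\Gamma$ is a sandwich graph (in particular rational), the exceptional divisor of the resolution $\tilde X$ is a tree of rational curves, hence $H_1(\tilde X) = 0$; combined with Theorem~\ref{exseq} and the fact that the Milnor fiber of the Scott deformation is diffeomorphic to $\tilde X$, this shows that $I \colon P \to L$ is surjective over $\mathbb Z$. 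Picking $p_j \in P$ with $I(p_j) = J(p'_j)$, the computation $\pi_W(p_j) = (J^{-1}I)^*(J^{-1}I\, p_j) = (J^{-1}I)^*(p'_j) = r_j$ shows $R \subset \pi_W(P) = N^*$.

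The crux is the reverse inclusion $N \subset R$. For any $n \in N \subset W$, since $\pi_{W'}(p_j)$ is orthogonal to $n$,
\[
\langle n, r_j\rangle \;=\; \langle n, \pi_W(p_j)\rangle \;=\; \langle n, p_j\rangle \;\in\; \mathbb Z,
\]
because both $n$ and $p_j$ lie in $P$. As $\{r_j\}$ is an orthonormal basis of $W$, the coefficients of $n$ in this basis are $-\langle n, r_j\rangle \in \mathbb Z$, so $n \in R$. Thus $N \subset R \subset N^*$, forcing $[R : N] = \sqrt{|\det N|}$, and $R/N \subset N^*/N$ becomes an isotropic subgroup (as $R$ is integral) of order exactly $\sqrt{|N^*/N|}$, i.e.\ a Lagrangian for the discriminant bilinear form.

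Finally, since $Q_\Gamma$ and $N$ are mutually orthogonal primitive sublattices of the unimodular $P$, the standard theory of discriminant forms yields a canonical anti-isometry $N^*/N \cong Q_\Gamma^*/Q_\Gamma$, under which $R/N$ corresponds to a Lagrangian $H \subset Q_\Gamma^*/Q_\Gamma$. The overlattice $U := \{x \in Q_\Gamma^* : x + Q_\Gamma \in H\}$ is then a rank-$n$ unimodular integral lattice containing $Q_\Gamma$, as required. The main subtlety is the integrality $\langle n, p_j\rangle \in \mathbb Z$ and the existence of the integer lifts $p_j$, which crucially use the $\mathbb Z$-surjectivity of the Scott incidence $I$ granted by rationality; the remainder is routine lattice bookkeeping.
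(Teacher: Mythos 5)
Your proof is correct, and it takes a genuinely different route from the paper's. The paper produces the required isotropic subgroup topologically: from \cite[Diagram 5.9]{de1998deformation} (Figure~\ref{fig:diagram}), injectivity of $J$ gives $H^2(F_J)\leq H_1(\partial F_J)\cong Q_\Gamma^*/Q_\Gamma$, a subgroup of order $\sqrt{|H_1(\partial F)|}$ which is isotropic for the linking form because $P'$ carries the standard diagonal form; Looijenga--Wahl \cite[1.7.1]{looijenga1986quadratic} then delivers the unimodular overlattice directly inside $Q_\Gamma^*$, with no need to pass through the complementary lattice. You instead stay entirely inside the Scott lattice $P$: reusing the orthonormality of the rows of $J^{-1}I$ from the proof of Theorem~\ref{thm:zksq}, you sandwich $N=Q_\Gamma^\perp\cap P$ between itself and $N^*$ by the explicit unimodular lattice $R=\Z\langle r_1,\dots,r_l\rangle$, check integrality of the glue via the integral lifts $p_j$ (which indeed requires the $\Z$-surjectivity of $I$, correctly deduced from $H_1(\tilde X)=0$ and Theorem~\ref{exseq}), and then transport the Lagrangian $R/N$ across the standard anti-isometry $N^*/N\cong Q_\Gamma^*/Q_\Gamma$ of discriminant bilinear forms for mutually orthogonal primitive complements in a unimodular lattice. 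Your version costs one extra step (the discriminant transport, which the paper's direct landing in $Q_\Gamma^*/Q_\Gamma$ avoids) but buys a completely explicit, purely lattice-theoretic construction of the overlattice that leans only on the matrix identities already established for Theorem~\ref{thm:zksq}, rather than on the topology of the Milnor fiber of the \QHD smoothing and its boundary. Both arguments are sound; only minor polish is needed in yours (the stray $\pi_{W'}$ should be the projection to $W^\perp$, and $p'_j$ should be introduced explicitly as the $j$-th basis vector of $P'$).
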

\begin{proof}
	Let $I: P\to L$ denote the incidence matrix of the Scott deformation, and $J:P'\to L$ denote the incidence matrix of a combinatorial \QHD deformation, as previously. By Figure \ref{fig:diagram} (\cite[Diagram 5.9]{de1998deformation}) we get that $I$ is surjective and $J$ is injective. In particular $H^2(F_J)\leq H_1(\partial F_J)$ is a subgroup of the discriminant group of $Q_\Gamma$ (which is isomorphic to $H_1(\partial F)$ of size $\sqrt{|H_1(\partial F)|}$).
	By commutativity of the diagram and the fact that $P$ is endowed with the trivial negative definite intersection form, this subgroup is isotropic, which by \cite[1.7.1]{looijenga1986quadratic} gives an integral unimodular overlattice $U$ for $Q_\Gamma$ in $Q_\Gamma^*$.
	\begin{figure}[h!]
		\centering
		% https://tikzcd.yichuanshen.de/#N4Igdg9gJgpgziAXAbVABwnAlgFyxMJZABgBoAmAXVJADcBDAGwFcYkRiQBfU9TXfIRQBGCtTpNW7ADrSA1jABOAAgBm3XiAzY8BIuTE0GLNohAAFDXx2CiAZkMST7ADJWt-XUOQAWR8akzAGMIWQUVdR5rAT0UAFZ-SVMOd20Y7wdhcQDklwA9ACpUz1sUAyyjJNdC4ptY5AM7bKqzAAk88gAKADEASlr0olEmyuc2gH0uvoGvIjIRp0CUqI86jNIFnPZW8eFO2TR6RTwmZWmVtNmUP02WkB29881L0uQE27Hl55L6gx9mz6cC4-db-UZLIHfNb6UjEAEQmavBxw8HJSHRK7IUQVRZoxH1URg3HsdGrQbXUg4rZmUkvep+InU5biGBQADm8CIoFUiggAFskGQQDgIEhhCsefykAB2GgipAGYlmVQ1CW8gWIABsctFiAcSpAqlkQSwiiCalVmklGtEwt15DVUsQivletR7EiVvVSH1rp8jo1fjtSDiAbFLt1wjsYcQwltruEDq9ToAHDqxeLkxqAJzpxApmPCIWu7OFvO5kCMegAIxgjHMIPYjBgqhw7mtCrzBazSDTwdj-p7bv7ccL8cjoaHQYTiqrtfrjbMzdb7e9iFl-e1lZrdYb0KXLbbha3rulhY3rs1hb7CZjFYTpaHwhvurPQ5P9uIMfIxd1g+5a6Jnm5DipQXBAA
		
		\adjustbox{scale=0.9}{
			\begin{tikzcd}
				&                            & 0 \arrow[d]                    & 0 \arrow[d]                           &                              &   \\
				& 0 \arrow[d] \arrow[r]      & L^* \arrow[d, "f^*"] \arrow[r] & L^* \arrow[d, "f\circ f^*"] \arrow[r] & 0 \arrow[d]                  &   \\
				0 \arrow[r] & \ker f \arrow[r] \arrow[d] & P \arrow[r, "f"] \arrow[d]     & L \arrow[r] \arrow[d]                 & co\ker f \arrow[r] \arrow[d] & 0 \\
				0 \arrow[r] & H_2(F_f) \arrow[r] \arrow[d] & H^2(F_f) \arrow[r] \arrow[d]     & H_1(\partial F_f) \arrow[r] \arrow[d]   & H_1(F_f) \arrow[r] \arrow[d]   & 0 \\
				& 0                          & 0                              & 0                                     & 0                            &  
			\end{tikzcd}
		}
		
		\caption{$f$ denotes the incidence matrix of a combinatorial smoothing, and $F_f$ the Milnor fiber (\cite[Diagram 5.9]{de1998deformation})}
		\label{fig:diagram}
		
	\end{figure}
\end{proof}

%\newpage
%\input{wierd_ex}
\section{Some Graphs Admitting a combinatorial \QHD smoothing}\label{sect:examples}
%In this section, we present some examples of graphs that admit combinatorial \QHD smoothings.
\begin{example}
	Let us denote by $fpp(n)$ the star-shaped graph with $n^2+n+1$ arms, each of length $n-1$, with the node having framing $-n^2-n-2$, and all other vertices $-2$.
	A combinatorial \QHD smoothing of this graph is the finite projective plane of order $n$. $\det(fpp(n))=n^{(n+1)n}(n+1)^2$, and they all have $Z_K^2+n=0$.
	
	By modifying the finite projective plane configuration, we obtain further graphs with combinatorial \QHD smoothings.
	Pick a point $x$ and all lines through it. Add further points $a_1,\dots,a_l$, and curves $\{x,a_i\}$ to the configuration, and for all lines that do not contain $x$, add all of the $a_i$. This is a \QHD smoothing for the graph we denote $fpp(n)_l$.
\end{example}
\begin{example}\label{ex:cluster}
	Consider $k$ "\textit{clusters}" consisting of $n$ points each: $\{A_i\}_1^k$ with $|A_i|=n$. Define sets of curves as follows: $C^i_j=\cup_{l\neq i} A_l\cup \{a_j\}$ where $a_j\in A_i$. This is a combinatorial \QHD smoothing of a graph, which we denote $Cl(k,n)$.

	\vspace{-1cm}
	\begin{figure}[h!]
		\includegraphics{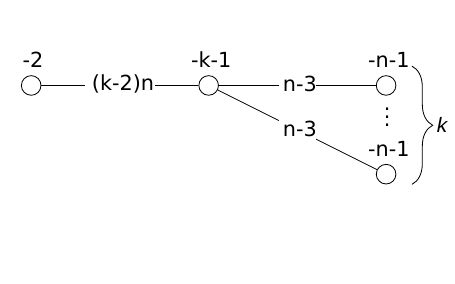}
		\vspace{-1.5cm}
		\label{fig:clustergraph}
		\caption{The graph $Cl(k,n)$. The number $x$ on an edge represents a path of $x$ many $-2$ vertices.}
	\end{figure}
\end{example}

\begin{example}
	We define cluster extension of the graphs $Cl(k,n)$. Here, consider an additional set $B$, with $|B|=b$, and add its points to all existing curves.
	Furthermore add new curves $D_i=\cup A_l\cup \{b_i\}$. The construction corresponds to a combinatorial smoothing of a graph, if $b\geq n\geq 2$.
	
	\begin{figure}[h!]
		\includegraphics{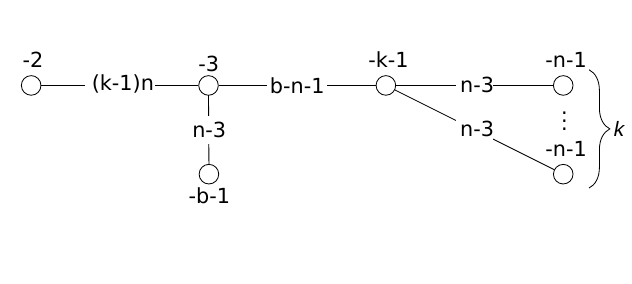}
		\centering \vspace{10pt}
		\vspace{-1.5cm}
		
		\caption{The graph we obtain after cluster extending $Cl(k,n)$.}
		\label{fig:clustergraph clusterext}
	\end{figure}
\end{example}

\begin{example}
	
	Another construction one can make is the star extension of $Cl(k,n)$.
	Add a new point $x$ and a new set $B$ of size $b$. Add the elements of $B$ to the existing curves, and define new curves $S_i=\{x\}\cup\{b_i\}$  and  $F=\cup_1^k A_i\cup\{x\}$.
	\begin{figure}[ht]
		\includegraphics{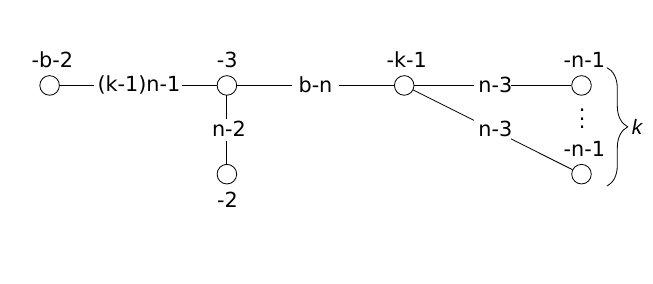}
		\centering\vspace{10pt}
		\vspace{-1.5cm}
		\caption{The graph we obtain after star extending $Cl(k,n)$.}
		\label{fig:clustergraph clusterext2}
	\end{figure}

\end{example}

\begin{example}
	Using the complement idea, we generalize the family of Wahl \cite[5.9.2]{wahl1981smoothings}. In this case we consider 3 sets $A, B, C$ of sizes $|a|,|b|,|c|$, the curves are $A\cup \{b_i\},\ B\cup \{c_i\},\ C\cup \{a_i\}$ with $a_i\in A, b_i\in B, c_i\in C$, or one can take the complements of the singletons inside their containing sets. We denote this choice by flipping the sign of the set, that the corresponding curves contain, i.e. $a,-b, c$ uses the curves $A\cup\{b_i\},B\cup(C\setminus\{c_i\}), C\cup\{a_i\}$.
	This gives combinatorial smoothings for the graphs we denote $t(a,b,c)$.
	If all parameters are positive, we get Wahl's family $\mathcal W$ (Figure \ref{fig:WMN} left), if one is negative, we get the $\mathcal N$ family (Figure \ref{fig:WMN} right). In the case $t(a,-b,-c)$ for the curves to be the smoothing of a graph we need either $a=2$ or $c=2$ and get the subfamily of N with $p=0$ and the subfamily of $W$ with $q=0$.
	If all three are negative with $|a|>|b|=|c|$ we get the novel graph of Figure \ref{fig:smalltneg}.
	
	\begin{figure}[h!]
		\includegraphics{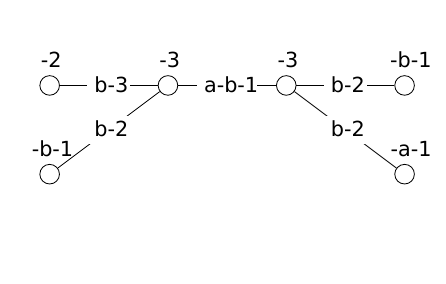}
			\vspace{-1.5cm}
			\caption{The graph $t(-a,-b,-b)$.}
			\label{fig:smalltneg}
		\end{figure}
		
	\end{example}

\section{Weighted homogeneous singularities -- stars}\label{sect:bpst}

This section presents a new proof of the main theorem of \cite{bhupal2011weighted}, namely the classification of weighted homogeneous singularities admitting a \QHD smoothing. From Theorem~\ref{thm:existence}, we only need to check the star-shaped elements of $\ABC$. In contrast to Section~\ref{sect:theorem}, not all graphs will be minimal rational; most curvettas will be ordinary cusps.

\subsection{The degree 3 case}
Let us begin with some generalities, which will be used multiple times in the following.
We will be dealing with star-shaped graphs with three arms, where two of the arms have length $1$ and decorations $-a$ and $-b$, respectively.
We will always put $\max\{a-2,0\}$ many $-1$ framed  vertices on the $-a$ vertex and $\max\{b-3,0\}$ many $-1$ framed vertices to the $-b$ vertex. Denote curvettas on these vertices by $S_i$ and $L_i$, respectively.
The third "long" arm will be blown down starting from the end and decorated as necessary.
Take an arbitrary order on the $-1$ leaves on every vertex, and denote the $j$th curvetta of the $i$th vertex by $C_i^j$.

\begin{figure}[h!]
	\includegraphics{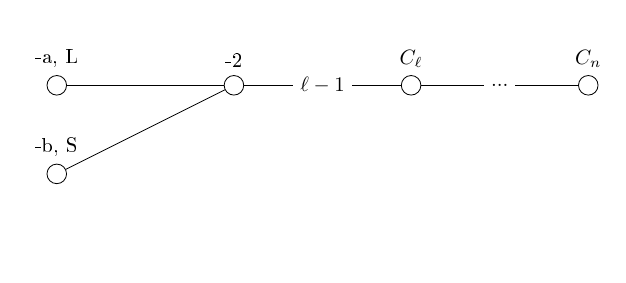}
	%    \centering\vspace{1.5cm}\hspace{-1cm}
	%\begin{tikzpicture}
	%\tikz \graph [ edge quotes={fill=white},nodes={draw, circle,empty nodes}, grow right=3cm, branch down=1.5cm]
	%{
		%{a["-a, L"], b["-b, S"]} --c["-2"] --["$\ell-1$"] e["$C_\ell$"]--["$...$"] d["$C_n$"]
		%};
	%
	%\end{tikzpicture}
	\vspace{-1cm}
	% \centering
	% \includegraphics[width=0.7\linewidth]{screenshot001}
	\caption{The general case graph with the curves indicated.}
	\label{fig:screenshot001}
\end{figure}

Since the node has framing at most $-2$, we know that the $C_i^j$ will be star-shaped and the $L_i, S_i$ will be smooth.
We provide their intersections in Table \ref{intersect}.
\begin{table}[h!]
	\begin{subtable}[h]{0.45\textwidth}
		\begin{tabular}{|c|c|c|c|}
			\hline
			Curve & $L_i$ & $S_i$ & $C_i^j$ \\
			\hline
			\# of points & 2 & 3 & 5+i \\
			\hline
		\end{tabular}
		\caption{Point data}
		\label{points}
	\end{subtable}\hfill
	\begin{subtable}[h]{0.45\textwidth}
		\begin{tabular}{|c|c|c|c|}
			\hline
			$(\cdot,\cdot)$ & $L_a$ & $S_a$ & $C_a^b$ \\
			\hline
			$L_i$ & 1 & 1 & 2 \\
			\hline
			$S_i$ &  & 2 & 3 \\
			\hline
			$C_i^j$ &  &  & $6+\min\{i,a\}$ \\
			\hline
		\end{tabular}
		\caption{Intersection data}
		\label{intersect}
	\end{subtable}
	
	\caption{The combinatorial information in our setup}
\end{table}

The multiplicity sequence of these cusps are easily computed to be $(2,1,\dots,1)$, so $C_i^j$ must all have one double (2-tuple) point and further simple points (i.e., at all but one point the curve is locally irreducible).
For a cusp curve $C$, we define $X(C)$ to be the point in $P$ which it contains with multiplicity $2$.
From this, it is simple to compute how many points each curve has to contain and their intersection multiplicities, this is collected in Table \ref{points}. We denote the points where these curves intersect by $\{p_i:1\leq i\leq 4+n\}$.

In this section, $n$ will always denote the number of vertices on the "long" arm (on which the $C_i^j$ are located).
From the construction of the $\mathcal A, \mathcal B, \mathcal C$ classes, it is easy to see by induction that there will always be $4+n$ many curves in total in the sandwich presentation described above.
We will mostly be interested in the smooth curves, the curves at the very end $C_n^\alpha$, and the curve closest to the node.
We let $\ell$ denote the index where there exists $C_\ell^1$, and for every $k<\ell$, there is no $C_k^1$.

\begin{remark}
	As before, we work with the intersection pattern of the given sandwich presentation of the graph, and by a small abuse of notation, we identify curves with the intersection points they contain.
\end{remark}

We note some simple facts about these curves:
\begin{lemma}\label{cusplemma}
	Consider $C_i^j$ and $C_a^b$ for $i\leq a$.
	If $X(C_i^j)\neq X(C_a^b)$, then $C_i^j\subset C_a^b$ (as sets, without multiplicities).
	If $X(C_i^j)= X(C_a^b)$, then there is a single $p\in C_i^j$ with $p\not\in C_a^b$.
\end{lemma}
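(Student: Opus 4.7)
The plan is to translate the lemma into arithmetic identities using the data $|C_i^j|=5+i$, $|C_a^b|=5+a$, and $C_i^j\cdot C_a^b = 6+i$ from Tables~\ref{points} and~\ref{intersect}, together with the local weight contributions at shared marked points dictated by the multiplicity sequence $(2,1,\dots,1)$. A shared simple point contributes $1$ to the intersection pairing, the distinguished 2-tuple point of one curve lying as a simple point on the other contributes $2$, and if the two distinguished points coincide the shared cusp contributes $3$.

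For the case $X(C_i^j)=X(C_a^b)$, the common cusp gives weight $3$ and every other shared point gives $1$, so
\[
|C_i^j\cap C_a^b| + 2 \;=\; 6+i,
\]
which forces $|C_i^j\cap C_a^b|=4+i$ and, by $|C_i^j|=5+i$, exactly one point of $C_i^j$ lies outside $C_a^b$.

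For the case $X(C_i^j)\neq X(C_a^b)$, set $\epsilon_1 = \mathbf{1}[X(C_i^j)\in C_a^b]$ and $\epsilon_2 = \mathbf{1}[X(C_a^b)\in C_i^j]$. The intersection formula becomes
\[
|C_i^j\cap C_a^b|+\epsilon_1+\epsilon_2 \;=\; 6+i,
\]
and since $|C_i^j\cap C_a^b|\leq 5+i$ we must have $\epsilon_1+\epsilon_2\geq 1$. In the clean subcase $\epsilon_1+\epsilon_2 = 1$ one obtains $|C_i^j\cap C_a^b|=5+i=|C_i^j|$, i.e.\ $C_i^j\subset C_a^b$, and consequently $\epsilon_1=1$ and $\epsilon_2=0$, which matches the first assertion of the lemma.

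The main obstacle is the parasitic subcase $\epsilon_1=\epsilon_2=1$ (both distinguished points lie on the opposite curve, though they are different), leaving one marked point of $C_i^j$ outside $C_a^b$. I expect to rule this out by a structural argument specific to the sandwich presentation: the distinguished point $X(C_a^b)$ corresponds to an infinitely near position determined by vertex $a$ of the long arm, which lies strictly past vertex $i$ in the blowup chain, so $X(C_a^b)$ cannot appear as a marked point of $C_i^j$ once $X(C_i^j)\neq X(C_a^b)$ and $i\leq a$; this forces $\epsilon_2=0$ and closes the case.
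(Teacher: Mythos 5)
There is a genuine gap, and it comes from two related miscounts. First, the local contribution of a coincident double point is $4$, not $3$: at a transverse intersection of two branches of multiplicities $2$ and $2$ the local intersection number is $2\cdot 2=4$ (your weight $3$ would be a smooth branch tangent to a cusp). With the correct weight, the case $X(C_i^j)=X(C_a^b)$ reads $4+s=6+i$, where $s$ is the number of shared \emph{simple} points of $C_i^j$; since $C_i^j$ has $3+i$ simple points, exactly one is missed, which is the claim. With your weight $3$ you would instead force all $3+i$ simple points to be shared, i.e.\ $C_i^j\subset C_a^b$ -- the opposite conclusion. You only recover the stated answer by comparing your count against $|C_i^j|=5+i$, but that $5+i$ from Table~\ref{points} counts the double point with multiplicity $2$; the underlying set has only $4+i$ elements. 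This same conflation weakens your first case: the correct bound is $|C_i^j\cap C_a^b|\leq 4+i$ (distinct points), which forces $\epsilon_1+\epsilon_2\geq 2$, i.e.\ $\epsilon_1=\epsilon_2=1$ \emph{and} every point of $C_i^j$ lies on $C_a^b$. Your ``clean subcase'' $\epsilon_1+\epsilon_2=1$ is impossible, and your ``parasitic subcase'' is in fact the only case and already yields the containment.

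The most serious problem is the structural argument you propose to close that subcase: you aim to show $X(C_a^b)\notin C_i^j$, but this is exactly backwards. The intersection count \emph{forces} $X(C_a^b)\in C_i^j$ (if $X(C_a^b)$ were missed, the intersection could be at most $2+(3+i)=5+i<6+i$), and this fact is indispensable later -- it is precisely Corollary~\ref{doublelemma}, that every $C_i^j$ passes through every other curve's double point, which drives all of the case analyses in Section~\ref{sect:bpst}. So the step you ``expect to rule out'' is provably false, and an argument in that direction cannot be completed. The correct route is the one-line version: $X(C_a^b)\in C_i^j$ is forced, and then the maximal possible intersection $2+2+(2+i)=6+i$ is attained only when $C_i^j\subset C_a^b$.
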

\begin{proof}
	If their double points differ, then $p_d:=X(C_a^b)\in C_i^j$, otherwise, $(C_a^b,C_i^j)\leq 5+i<6+i$, and if $p_d\in C_i^j$, then it has to go through every point for the intersection to reach $6+i$.
	
	On the other hand, if their double points coincide, then there are a further $3+i$ simple points to make $2+i$ intersections. The claim follows.
\end{proof}
\begin{corollary}\label{Cnlemma}
	The $C_n^\alpha$ curves all have distinct double points, and all other $C_i^j$ curves must pass through these double points with multiplicity 1.
	The $L_i$ and $S_i$ cannot go through these points.
\end{corollary}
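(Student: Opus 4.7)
The corollary follows from Lemma~\ref{cusplemma} together with an observation implicit in its proof: when $X(C_i^j)\neq X(C_a^b)$ with $i\leq a$, not only does $C_i^j\subset C_a^b$ hold, but also $X(C_a^b)\in C_i^j$ as a simple point (since it is distinct from the double $X(C_i^j)$). I will call this the \emph{augmented} form of the lemma. I would prove the three statements of the corollary in the order (2), (1), (3), referring respectively to ``$X(C_n^\alpha)$ is a simple point of every $C_i^j$ with $i<n$'', ``distinctness of the $X(C_n^\alpha)$'s across $\alpha$'', and ``non-incidence with $L_k, S_k$''.

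For (2), fix $i<n$ and $j$ and apply the augmented lemma to $(C_i^j,C_n^\alpha)$: the desired conclusion holds unless $X(C_i^j)=X(C_n^\alpha)=:p$. To exclude this coincidence, apply Lemma~\ref{cusplemma} to $(C_i^j,C_k^l)$ for each higher-index curve. The forced dichotomy --- either $X(C_k^l)=p$ or $C_i^j\subset C_k^l$ with $X(C_k^l)\in C_i^j$ simple --- propagates the shared double point up the long arm, and collating the accumulated intersection contributions against Table~\ref{intersect} yields a Bezout mismatch at $p$.

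For (1), suppose $X(C_n^\alpha)=X(C_n^\beta)=p$ with $\alpha\neq\beta$; Lemma~\ref{cusplemma} gives unique excluded simple points $q_\alpha\in C_n^\alpha\setminus C_n^\beta$ and $q_\beta\in C_n^\beta\setminus C_n^\alpha$. Since a combinatorial \QHD smoothing has $\mu(F)=0$, Corollary~\ref{muzero} and the no-free-points consequence recalled in the paper force $q_\alpha$ to lie on a second curve $C^*$. A case check eliminates every possibility: $C^*\in\{L_k,S_k\}$ is inconsistent with the Bezout count $(C^*,C_n^\beta)$ from Table~\ref{intersect} once $q_\alpha\notin C_n^\beta$ is imposed; $C^*=C_i^j$ with $i<n$ has $X(C_i^j)\neq p$ by claim (2), so the augmented lemma yields $C_i^j\subset C_n^\beta$, contradicting $q_\alpha\notin C_n^\beta$; and $C^*=C_n^\gamma$ for $\gamma\neq\alpha,\beta$ either returns $C_n^\gamma=C_n^\beta$ as sets --- the same contradiction --- or forces $X(C_n^\gamma)=p$ and iterates until the curves at vertex $n$ are exhausted.

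For (3), assume $L_k\ni p=X(C_n^\alpha)$. Since $L_k$ is smooth with $|L_k|=2$ and $(L_k,C_n^\alpha)=2$, the entire intersection is absorbed at $p$, so $L_k=\{p,r\}$ with $r\notin C_n^\alpha$. For any $C_i^j$ with $i<n$, claim (2) gives $p$ as a simple point of $C_i^j$, and matching $(L_k,C_i^j)=2$ forces $r\in C_i^j$; the augmented lemma applied to $(C_i^j,C_n^\alpha)$ then gives $C_i^j\subset C_n^\alpha$, whence $r\in C_n^\alpha$, a contradiction. The $S_k$ case is identical using $|S_k|=3$ and $(S_k,C_n^\alpha)=3$. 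The main obstacle is the exceptional sub-case in (2): the intersection constraints of Table~\ref{intersect} are tight enough that a naive application of Lemma~\ref{cusplemma} does not produce a contradiction, and one must carefully propagate the shared double point through a chain of higher-index cusps before the accumulated Bezout count becomes inconsistent.
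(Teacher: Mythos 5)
Your proposal misses the single observation the paper's proof turns on, and the place where you wave your hands is exactly where that observation is needed. Since a \QHD smoothing has $\mu(F)=0$, Corollary~\ref{muzero} forces $\#\{\text{points}\}=\#\{\text{curves}\}=4+n$; a curve $C_n^\alpha$ carries $5+n$ points counted with multiplicity (Table~\ref{points}), i.e.\ one double point and $3+n$ simple points, hence $4+n$ \emph{distinct} points --- so each $C_n^\alpha$ passes through \emph{every} point of the configuration. All three claims then fall out in one line each: the second case of Lemma~\ref{cusplemma} (``a single point of the smaller curve is missed by the larger'') can never occur with $C_a^b=C_n^\alpha$, which gives both the distinctness of the $X(C_n^\alpha)$ and the fact that every other $C_i^j$ meets each $X(C_n^\alpha)$ simply; and since all $2$ (resp.\ $3$) points of $L_k$ (resp.\ $S_k$) already lie on $C_n^\alpha$, hitting a double point would push $(L_k,C_n^\alpha)$ to $3>2$ (resp.\ $(S_k,C_n^\alpha)$ to $4>3$).

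Without this global count your argument does not close. In the ``exceptional sub-case'' of your claim (2), and in the terminal sub-case of (1) where every curve at vertex $n$ shares the double point $p$, the purely local Bezout propagation you describe produces no contradiction: take the simple point sets of the curves at vertex $n$ to be $G\setminus\{g_\alpha\}$ for a common ground set $G$ with $|G|=4+n$; then all pairwise intersection numbers of Table~\ref{intersect} are met, no point is free, and ``iterating until the curves at vertex $n$ are exhausted'' ends without contradiction. Only counting the total number of points ($|G|+1=5+n>4+n$) kills this configuration, and you never invoke that count --- indeed your closing sentence concedes that the ``accumulated Bezout count'' argument is not actually carried out. Two smaller issues: in (1) the case $C^*=L_k$ is not excluded by $(C^*,C_n^\beta)$ alone (the set $\{q_\alpha,p\}$ satisfies it; you need $(C^*,C_n^\alpha)=3\neq 2$ as well), and your proof of (3) quantifies over ``any $C_i^j$ with $i<n$'' and so says nothing when $\ell=n$ and no such curve exists.
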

\begin{proof}
	Since the $C_n^\alpha$ curves contain $5+n$ points with multiplicity, and there are $4+n$ points in total, they go through all points, i.e., it is impossible for the second case of Lemma \ref{cusplemma} to occur.
	
	Both $L_i$ and $S_i$ would intersect the $C_n^\alpha$ curves too many times if they went through the double points.
\end{proof}
We state the following for emphasis:
\begin{corollary}\label{doublelemma}
	All $C_i^j$ must go through every other $C_a^b$'s double point.
\end{corollary}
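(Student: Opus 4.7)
The plan is to observe that Corollary \ref{doublelemma} is already contained in the proof of Lemma \ref{cusplemma}, and to extract the relevant pieces explicitly. Given two curves $C_i^j$ and $C_a^b$ with $(i,j) \neq (a,b)$, I order them so that $i \leq a$; I need to establish both $X(C_a^b) \in C_i^j$ and $X(C_i^j) \in C_a^b$, which together amount to the corollary (applied to both orderings of any pair).

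If the two double points coincide, both inclusions are trivial. Otherwise, I read off the two halves of the argument used inside Lemma \ref{cusplemma}. First, the required intersection $(C_i^j, C_a^b) = 6 + i$ is unreachable unless $X(C_a^b) \in C_i^j$: if it were not, then the cusp of $C_i^j$ would contribute at most $2$ (in the case $X(C_i^j) \in C_a^b$, where it meets a smooth transverse branch of $C_a^b$) and the remaining common points would all be simple–simple, contributing at most $3 + i$ additional units, for a total of at most $5 + i < 6 + i$, a contradiction. This forces $X(C_a^b) \in C_i^j$. Second, Lemma \ref{cusplemma} itself yields $C_i^j \subset C_a^b$, which in particular contains $X(C_i^j)$. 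Hence both double points lie on the other curve, as claimed.

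The proof is short and there is no real obstacle, since the substantive work has already been done in Lemma \ref{cusplemma}; the only care needed is to notice that its proof delivers both directions of incidence. The stated conclusion of Lemma \ref{cusplemma} is only the containment $C_i^j \subset C_a^b$, but the numerical inequality $5 + i < 6 + i$ driving the first step of that proof is precisely the content of Corollary \ref{doublelemma}, so it is natural to flag it as a separate statement for later reference.
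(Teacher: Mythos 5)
Your proof is correct and is exactly the argument the paper intends: the corollary is stated ``for emphasis'' with no separate proof precisely because, as you observe, the bound $(C_a^b,C_i^j)\leq 5+i<6+i$ in the first step of the proof of Lemma~\ref{cusplemma} forces $X(C_a^b)\in C_i^j$, while the containment $C_i^j\subset C_a^b$ (or the trivial coincidence case) gives the other direction. Your explicit extraction of both incidences from that proof matches the paper's reasoning.
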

\subsubsection{Graphs in $\mathcal C$}
We begin the case analysis with the class $\mathcal C$.
% \begin{figure}[ht!]
	%     \centering
	%     %\includegraphics[width=1\linewidth]{3star.png}
	%     \caption{\cite[Figure 1]{bhupal2011weighted} The 3-star graphs admitting a \QHD smoothing, $p,q,r\geq 0$.}
	%     \label{fig:3allstars}
	% \end{figure}

\begin{figure}[h!]
	\includegraphics{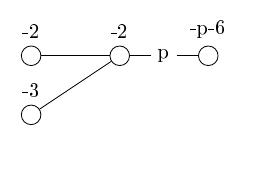}
	%    \centering\vspace{2cm}\hspace{-2cm}
	%\begin{tikzpicture}
	%\tikz \graph [ edge quotes={fill=white},nodes={draw, circle,empty nodes}, grow right=1.5cm,]% branch down=1.5cm]
	%{
		%{a["-2"], b["-3"]} --d["-2"] --["p"] c["-p-6"]
		%};
	%\end{tikzpicture}
	\vspace{-1cm}
	% \centering
	% \includegraphics[width=0.7\linewidth]{screenshot001}
	\caption{Graphs in $\mathcal C_6$ admitting a \QHD smoothing}
	\label{fig:C6}
\end{figure}

\begin{theorem}
	In $\mathcal C_6$, only the family depicted in Figure~\ref{fig:C6} can admit a \QHD smoothing.
\end{theorem}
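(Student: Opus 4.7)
The plan is to run a rigid combinatorial case analysis on the intersection pattern set up in the preamble to Section~6. Denote by $n$ the length of the long arm, by $\ell$ the smallest index with a curve $C_\ell^1$, and let $N_n$ be the number of curves $C_n^\alpha$ at the end of the long arm; this last number is determined by the framing of the node in $\mathcal{C}_6$. I would start by extracting the "rigid skeleton" of the configuration: by Corollary~\ref{Cnlemma} the $C_n^\alpha$ have pairwise distinct double points $X(C_n^\alpha) \in P$, none of the $L_i$ or $S_i$ can pass through any of these $N_n$ points, and by Corollary~\ref{doublelemma} every other $C_i^j$ must contain each $X(C_n^\alpha)$ as a simple point.

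Next I would locate the remaining double points. For each intermediate $C_i^j$ with $\ell \le i < n$, Lemma~\ref{cusplemma} forces either $X(C_i^j) = X(C_n^\alpha)$ for some $\alpha$ (in which case $C_i^j$ misses exactly one point of $C_n^\alpha$) or $X(C_i^j)$ is a new point and then $C_n^\alpha \supset C_i^j$ forces a further $5+i$ points to be common. A double count, using Table~\ref{intersect} together with the fact that there are only $4+n$ points in $P$, bounds how many distinct interior double points can exist and pins down the position of $X(C_\ell^1)$ up to symmetry. In particular the multiplicity condition $|C_\ell^1| = 5+\ell$ combined with $|C_\ell^1 \cap C_a^b| = 6+\ell$ for $a>\ell$ forces $X(C_\ell^1)$ to lie in the common locus of all higher cusps, severely restricting the remaining freedom.

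Then I would use the smooth curves $L_i$ and $S_i$ as the counting bottleneck. Each $L_i$ carries only two points and each $S_i$ carries three, and none of these can be the $X(C_n^\alpha)$, so all of them live in the "small" set $P \setminus \{X(C_n^\alpha)\}$ of size $4+n-N_n$. The constraints $(L_i,L_j)=1$, $(S_i,S_j)=2$, $(L_i,S_j)=1$, $(L_i,C_a^b)=2$, $(S_i,C_a^b)=3$ translate into a finite system of set-theoretic equations on subsets of $P$; solving it produces a short list of admissible $(a,b,n)$.

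Finally I would match the surviving admissible triples against Figure~\ref{fig:C6} and check that each one actually admits a combinatorial \QHD smoothing by writing the curves down explicitly, while confirming that any other triple violates $\mu = 0$ or the Noether multiplicity condition. The main obstacle I expect is the bookkeeping in the second step: the case split on how many of the $X(C_i^j)$ coincide with $X(C_n^\alpha)$'s can branch, so I would organize the argument by first fixing $N_n$ from the node framing, then processing indices $i$ from $n$ downward to $\ell$, which keeps the case tree linear in $n$ and terminates quickly against the bound $|P| = 4+n$.
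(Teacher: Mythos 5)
Your plan assembles the right general tools (Corollaries~\ref{Cnlemma} and \ref{doublelemma}, Lemma~\ref{cusplemma}, and point-counting against $|P|=4+n$), but it has two genuine gaps. First, your third step --- using the $L_i$ and $S_i$ as a ``counting bottleneck'' --- is vacuous in $\mathcal C_6$: with $(a,b,c)=(2,3,6)$ the two short arms carry $\max\{2-2,0\}=0$ curves $S_i$ and $\max\{3-3,0\}=0$ curves $L_i$, so there are no smooth curvettas at all and the ``finite system of set-theoretic equations'' you propose to solve is empty. Any argument routed through the smooth curves cannot close this case. Second, and more importantly, you never carry out the one computation that actually produces the contradiction: when $\ell<n$ the number of end curves $C_n^\alpha$ is $6+\ell$ (this follows by induction on the blowup construction and depends on $\ell$, not merely on the node framing as you assert). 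By Corollary~\ref{Cnlemma} these have pairwise distinct double points, and by Corollary~\ref{doublelemma} the curve $C_\ell^1$ must pass through every one of them in addition to carrying its own double point, hence contains at least $2+(6+\ell)=8+\ell$ points with multiplicity; but $|C_\ell^1|=5+\ell$. That single inequality rules out $\ell<n$ and leaves only $\ell=n$, the family of Figure~\ref{fig:C6}. Your second step gestures at ``a double count \dots bounds how many distinct interior double points can exist,'' but without pinning down the count $6+\ell$ the contradiction does not materialize, and your proposed case tree over placements of the interior $X(C_i^j)$ is unnecessary once that count is in hand.

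A smaller inaccuracy: you write that $|C_\ell^1\cap C_a^b|=6+\ell$ ``combined with'' $|C_\ell^1|=5+\ell$ forces $X(C_\ell^1)$ into the common locus of the higher cusps. By itself this is not a contradiction nor a strong restriction, since the intersection multiplicity at a shared double point is $4$; the dichotomy of Lemma~\ref{cusplemma} (either $C_\ell^1\subset C_a^b$ as sets, or the double points coincide and exactly one point is missed) is what one actually uses, and in $\mathcal C_6$ it is the sheer number of distinct end double points, not their location, that kills the configuration.
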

\begin{proof}
	%\begin{figure}[!ht]
	%    \centering
	%    \includegraphics[width=0.5\linewidth]{C6.png}
	%    \caption{The setup in the $C_6$ case.}
	%    \label{fig:C6}
	%\end{figure}
	In this case, there are no $L_i, S_i$, only the cusps.
	The known case is $\ell=n$, we want to rule out any other possibility.
	It is easy to see by induction that there will be $6+\ell$ many $C_n^\alpha$ curves if $\ell<n$.
	By Corollary \ref{doublelemma}, $C_\ell^1$ would have to contain $2+6+\ell>5+\ell$ many points, a contradiction.
\end{proof}

\begin{figure}[h!]
	\includegraphics{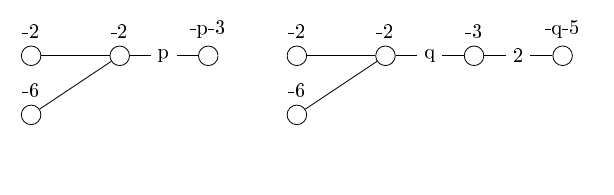}
	%    \centering\vspace{2cm}
	%\begin{tikzpicture}
	%\tikz \graph [ edge quotes={fill=white},nodes={draw, circle,empty nodes}, grow right=1.5cm,]% branch down=1.5cm]
	%{
		%{a["-2"], b["-6"]} --d["-2"] --["p"] c["-p-3"] --[white]
		%{av["-2"], bv["-6"]} -- cv["-2"] --["q"] dv["-3"] --["2"] gv["-q-5"]--[white]
		%};
	%\end{tikzpicture}
	\vspace{-1cm}
	% \centering
	% \includegraphics[width=0.7\linewidth]{screenshot001}
	\caption{The graphs for the $C_3$ case, $p\geq0, q\geq -1$.}
	\label{fig:C3}
\end{figure}
\begin{theorem}
	In $\mathcal C_3$, only the graphs of Figure~\ref{fig:C3} can admit \QHD smoothings.
\end{theorem}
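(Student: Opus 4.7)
The plan is to mirror the proof of the $\mathcal{C}_6$ theorem while carrying along the extra smooth curvettas $L_i, S_i$ that the $\mathcal{C}_3$ construction forces onto the two short arms. First I would determine precisely how many $L_i$ and $S_i$ appear in the sandwich presentation of a graph $\Gamma \in \mathcal{C}_3$ via the $\max\{a-2,0\}$ and $\max\{b-3,0\}$ recipe of Definition~\ref{def:sandwich presentation}, and record the relevant rows of Tables~\ref{points} and~\ref{intersect}. The cusps $C_i^j$ continue to satisfy the earlier lemmas: Corollary~\ref{Cnlemma} gives that the end cusps $C_n^\alpha$ have pairwise distinct double points and jointly exhaust the $4+n$ ambient points, and Corollary~\ref{doublelemma} forces every cusp to pass through every other cusp's double point.

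Next I would carry out the $\ell$-vs-$n$ dichotomy. An induction on the blowups making up $\Gamma$ yields a formula for the number of end cusps $C_n^\alpha$ as a function of $\ell$, $n$, and the shape of the short arms; plugging this into the bound $|C_\ell^1| = 5+\ell$ coming from Corollary~\ref{doublelemma} should, just as in the $\mathcal{C}_6$ case, eliminate all but a narrow range of $\ell$. I expect the outcome to be $\ell = n$ in the surviving configurations, reducing the enumeration to cases where only end cusps and the smooth curves are left to analyse.

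Finally I would impose the smooth-curve incidences. By Corollary~\ref{Cnlemma} the $L_i, S_i$ must avoid every cusp double point $X(C_n^\alpha)$, so all of their points are simple points on each $C_n^\alpha$. Combined with $|L_i|=2$, $|S_i|=3$, the pairwise intersection numbers from Table~\ref{intersect}, and the $\mu = 0$ identity from Corollary~\ref{muzero}, these constraints yield a finite linear system on how the simple points are distributed among the smooth curves. Solving this system parametrically should single out exactly the two-parameter family of Figure~\ref{fig:C3} (with $p\geq 0, q\geq -1$) and exclude every other $\mathcal{C}_3$ candidate.

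The main obstacle will be the simultaneous bookkeeping in this last step: the small $L_i, S_i$ compete for the same simple points on the cusps while also having prescribed pairwise intersections with each other. The problem is finite once the $\ell = n$ reduction is in place, but verifying that no exotic parameter combination slips through requires a careful case split on the number of end cusps and on how their simple points get partitioned among the smooth curvettas. Once this enumeration is complete, matching the surviving data against the shape of the extended $-3$-arm will produce the family of Figure~\ref{fig:C3} exactly.
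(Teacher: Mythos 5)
Your plan breaks at the second step, where you expect the point-count on $C_\ell^1$ to eliminate all $\ell<n$ configurations ``just as in the $\mathcal{C}_6$ case.'' It does not, and in fact it must not: Figure~\ref{fig:C3} contains \emph{two} families, one with $\ell=n$ and one with $\ell=n-3$, so any argument that kills every $\ell<n$ case would be disproving a known example. Concretely, in $\mathcal{C}_3$ the induction gives $3+\ell$ end cusps (not $6+\ell$ as in $\mathcal{C}_6$), so by Corollary~\ref{doublelemma} the curve $C_\ell^1$ must contain those $3+\ell$ double points plus its own double point, totalling exactly $5+\ell$ points with multiplicity --- precisely its budget from Table~\ref{points}, with no contradiction. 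What the exact saturation does give you is that $C_\ell^1$ has \emph{no} simple points beyond the end cusps' double points, and this is the lever the paper actually uses: each $L_i$ must then meet $C_\ell^1$ twice at $X(C_\ell^1)$ (it cannot pass through any $X(C_n^\alpha)$ by Corollary~\ref{Cnlemma}); any further intermediate cusp $C_k^\alpha$ with $\ell\leq k<n$ would be forced to share its double point with $C_\ell^1$ and hence intersect it at least $4+(3+\ell)=7+\ell>6+\ell$ times, a contradiction; and the resulting curve count $(3+\ell)+1+3=4+n$ pins down $\ell=n-3$, which is the second graph of the figure.

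Your third step is also aimed at the wrong target: once $\ell=n$ there is nothing left to enumerate (that is the first known family by construction), and the genuine case analysis happens for $\ell<n$, where the content is ruling out extra intermediate cusps and locating the $L_i$, not solving a distribution problem for simple points on the end cusps. The $\mu=0$ identity and the pairwise intersections of the $L_i$ among themselves play no essential role here. So the proposal as written would either stall at $\ell<n$ (no contradiction materialises) or, if pushed through as described, would incorrectly discard the $\ell=n-3$ family.
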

\begin{proof}
	In this case, we have three $L_i$'s in addition to the cusps.
	If $\ell=n$, we are done, so let $\ell<n$.
	Simple induction shows that there will be $3+ \ell$ many $C_n^\alpha$'s, so $C_\ell^1$ contains their double points, its double point and no further point.
	This means that the $L_i$ have to go through $C_\ell^1$'s double point to intersect it twice by Lemma \ref{Cnlemma}.
	
	If there would be another $C_k^\alpha$ with $\ell\leq k<n$, then it intersects $C_\ell^1$  least $7+ \ell$ times by Corollary \ref{doublelemma}, a contradiction.
	So there is $C_\ell^1$, and the $C_n^\alpha$ curves.
	This means that $3+\ell=4+n-3-1$, ergo $\ell=n-3$, and this is the other known case.

\end{proof}

\begin{figure}[h!]
	%    \centering\vspace{2cm}%\hspace{-2cm}
	%\begin{tikzpicture}
	%\tikz \graph [ edge quotes={fill=white},nodes={draw, circle,empty nodes}, grow right=1.5cm,]% branch down=1.5cm]
	%{
		%{a["-6"], b["-3"]}--c["-2"]--["p"]d["-p-2"]--[white]
		%{av["-6"], bv["-3"]}--cv["-2"]--["q"]dv["-4"]--["3"]ev["-q-4"]
		%};
	%
	%\end{tikzpicture}
	\includegraphics{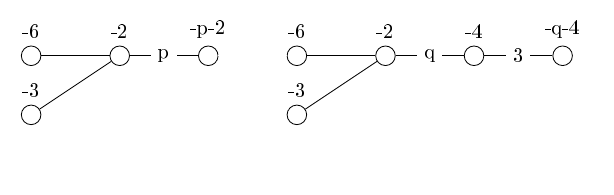}
	\includegraphics{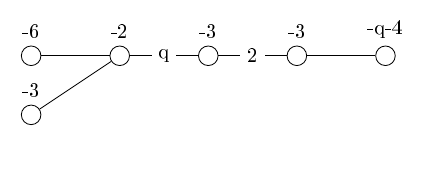}
	%\begin{tikzpicture}
	%\tikz \graph [ edge quotes={fill=white},nodes={draw, circle,empty nodes}, grow right=1.5cm,]% branch down=1.5cm]
	%{
		%{a["-6"], b["-3"]}--c["-2"]--["q"] d["-3"]--["2"]e["-3"]--f["-q-4"]
		%};
	%
	%\end{tikzpicture}
	\vspace{-1cm}
	% \centering
	% \includegraphics[width=0.7\linewidth]{screenshot001}
	\caption{The graphs for the $C_2$ case, $p\geq0, q\geq -1$.}
	\label{fig:C2}
\end{figure}
\begin{theorem}
	In $\mathcal C_2$, only the graphs of Figure~\ref{fig:C2} can admit a \QHD smoothing.
\end{theorem}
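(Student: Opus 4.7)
The plan is to parallel the case analysis used in the $\mathcal C_6$ and $\mathcal C_3$ proofs, now with both $L_i$ and $S_i$ smooth curves present in the $\mathcal C_2$ sandwich presentation. As before, the subcase $\ell=n$ produces one of the two graphs in Figure~\ref{fig:C2}, so the content of the proof lies in the case $\ell<n$.

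First, by the same induction on the blowup construction of $\mathcal C$ used in the previous two proofs, when $\ell<n$ there are $2+\ell$ curves $C_n^\alpha$ at the tip of the long arm. Corollary~\ref{doublelemma} then forces $C_\ell^1$ to contain every double point $X(C_n^\alpha)$ together with its own double point $X(C_\ell^1)$, so $3+\ell$ of the $5+\ell$ points of $C_\ell^1$ are already accounted for, leaving only two free slots.

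Next, I would rule out intermediate cusps $C_k^\alpha$ with $\ell<k<n$: Lemma~\ref{cusplemma} together with Corollary~\ref{doublelemma} show that $C_\ell^1\cap C_k^\alpha$ must contain the $3+\ell$ double points already identified plus enough simple points to realise $(C_\ell^1,C_k^\alpha)=6+\ell$, which is impossible with only two slots left in $C_\ell^1$. With no intermediate cusps, the total curve count $4+n=(2+\ell)+1+\#\{L_i,S_i\}$ pins down $\ell$ uniquely in terms of the fixed number of $L_i$'s and $S_i$'s dictated by the $\mathcal C_2$ framings, producing the second graph in Figure~\ref{fig:C2}.

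Finally, I would verify that this value of $\ell$ is actually realised by a consistent incidence pattern. By Corollary~\ref{Cnlemma} each $L_i$ and $S_i$ avoids every $X(C_n^\alpha)$; to achieve $(L_i,C_n^\alpha)=2$ and $(S_i,C_n^\alpha)=3$ the smooth curves must pass through $X(C_\ell^1)$ and use the remaining non-double points, and Table~\ref{intersect} then fixes their mutual intersections. The main obstacle will be the combinatorial bookkeeping when both types of smooth curves are present: one has to check that the pairwise intersection counts $(L_i,L_j)=1$, $(L_i,S_j)=1$ and $(S_i,S_j)=2$ can be simultaneously realised given the tight budget of two free slots in $C_\ell^1$. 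Once this consistency is established, all other candidate values of $\ell$ are excluded by the curve count and intersection constraints, completing the classification.
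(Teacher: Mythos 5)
Your proposal has a genuine gap at its central step, and it would classify the wrong set of graphs. First, a setup error: in the $\mathcal C_2$ case the two short arms carry framings $-6$ and $-3$, so the smooth curvettas are four $S_i$'s and there are no $L_i$'s at all (the configuration with one $L$ and two $S$'s you describe is the $\mathcal B_2$ case); this changes all of your bookkeeping. More seriously, the step where you ``rule out intermediate cusps $C_k^\alpha$ with $\ell<k<n$'' is false. The intersection number $(C_\ell^1,C_k^\alpha)=6+\ell$ is counted with multiplicity at the double points: if $X(C_k^\alpha)=p_d=X(C_\ell^1)$ that single point already contributes $4$, and if $X(C_k^\alpha)=p_s$ it contributes $2$ with $C_\ell^1\subset C_k^\alpha$ by Lemma~\ref{cusplemma}; either way $6+\ell$ is attainable even though $C_\ell^1$ has only the two non-$X(C_n^\alpha)$ points $p_d$ and $p_s$. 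Your count treats the intersection as set-theoretic and so wrongly excludes these cusps. In fact the paper shows the opposite: when $\ell<n$ there must be \emph{exactly one} further intermediate cusp, and the two subcases $X(C_i^j)=p_d$ and $X(C_i^j)=p_s$ produce the third and second graphs of Figure~\ref{fig:C2} respectively, both with $\ell=n-4$. Your ``no intermediate cusps'' scenario forces $\ell=n-3$, which the paper eliminates by a point count: the $C_n^\alpha$ double points and $p_d,p_s$ use up $n+1$ of the $4+n$ points, leaving three, while the four $S_i$ each need a distinct extra point $p_{S_i}$ (distinct because $(S_i,S_j)=2$ is already exhausted by $p_d,p_s$). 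So your route both misses one of the three graphs in the classification and lands on a value of $\ell$ that does not occur.

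The missing idea is the mechanism the paper uses to bound the number of intermediate cusps once one more is shown to exist: letting $K$ be the intermediate cusps other than $C_\ell^1$ and $\tilde K$ their points away from the $X(C_n^\alpha)$'s, the global count $\#\{\text{points}\}=\#\{\text{curves}\}$ gives $k=\tilde k+1$, while restricting the curves of $K$ to $\tilde K$ yields a combinatorial smoothing of a linear graph and hence $k\le\tilde k$ unless the second intermediate cusp contains no points beyond those already forced. This restriction-to-a-linear-graph argument, together with the two-subcase analysis of where the double points sit, is the substance of the proof and has no counterpart in your proposal.
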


\begin{proof}
	In this case, there will be four $S_i$ curves and the cusps.
	Let $\ell<n$.
	There will be $2+ \ell$ many cusps on the last vertex.
	
	Counting the number of points of $C_\ell^1$ it has (besides the $C_n^\alpha$ double points)  $p_d:=X(C_\ell^1)$ and a further simple point denoted $p_s$.
	All of the $S_i$ have to go through $p_d$ and $p_s$ since  $(S_i,C_\ell^1)=3$.
	This also means that they have a further point each $p_{S_i}$.
	If there is a second $C_i^j$ for some $\ell\leq i<n$, then $X(C_i^j)=p_d$ or $p_s$.
	
	If $X(C_i^j)=p_d$, then $p_s$ will be the point $C_i^j$ misses by Lemma \ref{cusplemma}, and  $p_{S_i}\in C_i^j$.
	If there were a third $C_a^b$ curve with $i\leq a<n$, we see that $X(C_a^b)= p_d$ by Corollary~\ref{doublelemma}.
	By the same argument we get that $p_{S_i}\in C_a^b$ for $i=1,\dots,4$.
	
	Every subsequent curve has to meet these same requirements.
	Let $K:=\mathcal C\setminus\{S_i,C_n^\alpha,C_\ell^1\}$, with $|K|=k$. Let $\tilde K:=\cup K\setminus\{X(C_n^\alpha):1\leq \alpha\leq \ell+2\}\subset V$ with $|\tilde K|=\tilde k$. Counting the curves and points of the configuration, we get
	\begin{equation}\label{eq:C2}
		2+\ell+1+4+k=2+\ell+2+4+\tilde k=4+n,
	\end{equation}
	implying $k=1+\tilde k$.
	\begin{claim}
		$C_i^j=C_{\ell+3}^1$.
	\end{claim}
	
	\begin{proof}
		If $C_i^j$ would contain any further points, then the curves in $K$ restricted to the points of $\tilde K$ give a solution to the combinatorial smoothing problem for a linear graph, since $X(C)=p_d$ for all $C\in K$.
		This implies $k\leq\tilde k$, a contradiction.
	\end{proof}
	
	Any further curve would also go through $p_d$ and $p_{S_i}$, which means it would intersect $C_{\ell+3}^1$ $10+\ell>9+ \ell$ times, a contradiction. Substituting $k=1$ into Equation~\ref{eq:C2} we see that $\ell=n-4$, this is the third graph of Figure~\ref{fig:C2}.
	
	The other case plays out similarly.
	If $X(C_i^j)=p_s$, then every other curve has to have its double point at $p_s$ and go through $p_d$ to intersect $S_i$ and $C_\ell^1$ enough times.
	The same argument shows that $C_i^j=C_\ell^2$, there are no further curves and $\ell=n-4$.
	This is the second graph of Figure~\ref{fig:C2}.

	If there is only one cusp curve that is not on the last vertex, then we have $4+n=2+\ell+1+4$, ergo $\ell=n-3$.
	The $C_n^\alpha$'s require $2+n-3=n-1$ points, $C_\ell^1$ an additional $2$.
	We have only 3 points left, which is a contradiction since the $S_i$ need $4$ additional points besides the previously mentioned ones.
	
	Finally, if $\ell=n$, we get the first graph of Figure~\ref{fig:C2}.
\end{proof}
\subsubsection{Graphs in $\mathcal B$}
We continue with the $\mathcal B$ class.
\begin{figure}[h!]
	\includegraphics{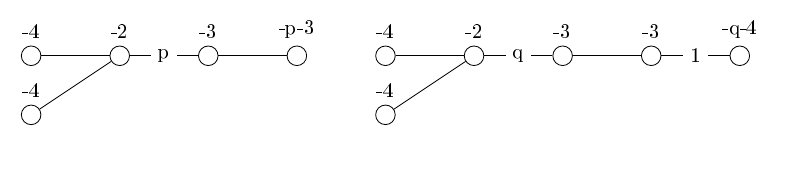}
	%    \centering\vspace{2cm}\hspace{-1cm}
	%\begin{tikzpicture}
	%\tikz \graph [ edge quotes={fill=white},nodes={draw, circle,empty nodes}, grow right=1.5cm,]% branch down=1.5cm]
	%{
		%{a["-4"], b["-4"]} --d["-2"] --["p"]e["-3"]--f["-p-3"]--[white]
		%{av["-4"], bv["-4"]} --dv["-2"] --["q"]ev["-3"]--fv["-3"]--["1"]gv["-q-4"]
		%};
	%\end{tikzpicture}
	\vspace{-1cm}
	% \centering
	% \includegraphics[width=0.7\linewidth]{screenshot001}
	\caption{The graphs for the $B_2$ case, $p\geq0, q\geq -1$.}
	\label{fig:B2}
\end{figure}

\begin{theorem}
	In $\mathcal B_2$, only the graphs of Figure~\ref{fig:B2} can admit a \QHD smoothing.
\end{theorem}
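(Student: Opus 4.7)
The plan is to adapt the $\mathcal C_2$ proof to $\mathcal B_2$. Both short arms of a $\mathcal B_2$ graph carry framing $-4$, so with $a=b=4$ in the setup of Section~\ref{sect:bpst} the sandwich presentation contains two curvettas $S_1, S_2$ from the $-a$-arm, one curvetta $L_1$ from the $-b$-arm, and $n+1$ cusps $C_i^j$ along the long arm of length $n$, for a total of $4+n$ curves. The case $\ell=n$, where every cusp is concentrated at the terminal vertex, reproduces one of the graphs of Figure~\ref{fig:B2}; the remaining task is to show that $\ell<n$ forces exactly the other graphs.

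First I would determine, by the induction-on-construction argument familiar from the $\mathcal C_n$ cases, the number $N_n$ of cusps $C_n^\alpha$ at the terminal vertex in terms of $\ell$. Combined with Corollary~\ref{doublelemma} (each terminal double point lies on $C_\ell^1$) and the identity $|C_\ell^1|=5+\ell$ (counted with multiplicity $2$ at $p_d := X(C_\ell^1)$), this fixes the support of $C_\ell^1$ as $p_d$, the $N_n$ terminal double points, and a small number of further simple points. The intersections $(S_a, C_\ell^1)=3$ and $(L_1, C_\ell^1)=2$ from Table~\ref{intersect}, combined with Corollary~\ref{Cnlemma} (the $S_i, L_1$ avoid terminal double points), then force each $S_i$ and $L_1$ to pass through $p_d$ and a specific subset of the further simple points; the pairwise constraints $(S_1, S_2)=2$ and $(S_a, L_1)=1$ cut these distributions sharply.

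Next I would analyse the possible intermediate cusps $C_i^j$ with $\ell<i<n$. Corollary~\ref{doublelemma} forces $X(C_i^j)$ to lie among the further simple points of $C_\ell^1$, and Lemma~\ref{cusplemma} prescribes the remaining incidences. A linear-subgraph sub-argument parallel to the one used in $\mathcal C_2$ --- restricting $K := \mathcal C\setminus\{S_i, L_1, C_n^\alpha, C_\ell^1\}$ to the non-terminal points $\tilde K$, obtaining a combinatorial smoothing of a linear subgraph, and observing that more than one intermediate cusp would contradict $|K|\leq|\tilde K|$ --- shows the intermediate cusp (if any) is unique and its index is fixed. A global count analogous to Equation~\ref{eq:C2}, balancing $N_n + 1 + 2 + 1 + k = 4+n$, then determines $\ell = n-c$ for a short list of values $c$, each matching a graph in Figure~\ref{fig:B2}.

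The main obstacle is the case analysis for where the intermediate cusp's double point sits: because $\mathcal B_2$ carries both $S_i$'s and an $L_1$ (unlike $\mathcal C_2$ where only $S_i$'s are present), the placement of $X(C_i^j)$ among the further simple points of $C_\ell^1$ creates several subcases for how those simple points distribute across $S_1, S_2, L_1$ and the intermediate cusp. Each subcase must be either excluded or matched to one of the graphs of Figure~\ref{fig:B2} using the pairwise intersections $(S_1, S_2)=2$ and $(S_a, L_1)=1$, and this bookkeeping is the principal source of complication, even though each individual check is short.
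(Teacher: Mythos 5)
Your overall plan coincides with the paper's proof: the paper likewise notes that the cusp setup is identical to the $\mathcal C_2$ case (there are $2+\ell$ terminal cusps, so $C_\ell^1$ carries exactly \emph{one} further simple point $p_s$ besides $p_d$ and the terminal double points), pins down $S_1,S_2\ni p_d,p_s$ and $L_1\ni p_d$ exactly as you do, and then classifies the intermediate cusps. Your substitution of the linear-subgraph counting trick for the paper's direct ``too many intersections with $C_{\ell+1}^1$'' argument is a legitimate variant, not a different method.

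There is, however, a concrete gap in your case analysis. You assert that Corollary~\ref{doublelemma} forces $X(C_i^j)$ to lie among the \emph{further simple points} of $C_\ell^1$. What the corollary (together with Corollary~\ref{Cnlemma}) actually forces is $X(C_i^j)\in C_\ell^1\setminus\{X(C_n^\alpha)\}=\{p_d,p_s\}$; the double point $p_d=X(C_\ell^1)$ is a genuine possibility and behaves differently from $p_s$. By Lemma~\ref{cusplemma}, if $X(C_i^j)=p_d$ then $C_i^j$ misses exactly one point of $C_\ell^1$ (necessarily $p_s$), forcing $p_{S_1},p_{S_2}\in C_i^j\not\ni p_{L_1}$, whereas if $X(C_i^j)=p_s$ then $C_\ell^1\subset C_i^j$ and the incidences with $S_1,S_2,L_1$ come out the opposite way. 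Since there is only one further simple point, your stated dichotomy collapses to a single case and the $p_d$-case is silently dropped; the paper must (and does) treat both, and it is only a posteriori that both land on the same graph of Figure~\ref{fig:B2}. Separately, note that the curve count $N_n+1+2+1+k=4+n$ with $k\le 1$ still leaves $\ell=n-2$, $k=0$ open, which does \emph{not} correspond to a graph of Figure~\ref{fig:B2}; you need the accompanying point count (the $\mu=0$ condition, i.e.\ the full analogue of Equation~\ref{eq:C2}) to exclude it, so that only $\ell=n$ and $\ell=n-3$ survive.
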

\begin{proof}
	In this case, there are $L_1, S_1, S_2$ and the cusps. Suppose that not every blowup happened on the edge next to the leaf.
	In this case, the setup of the cusps is identical to the $\mathcal C_2$ case, the curve $C_\ell^1$ has (besides the $X(C_n^\alpha)$ points) a double point $p_d$ and one further simple point $p_s$. 
	$p_s,p_d\in S_i$ to intersect $C_\ell^1$ three times (and both have one further point $p_{S_i}$).
	$L_1$ has to go through $p_d$ to intersect $C_\ell^1$ twice, and thus it contains one further point $p_{L_1}$.
	
	We continue similarly to the $\mathcal C_2$ case.
	Any further curve can have its double point at either $p_d$ or $p_s$, and all of them have to be at the same point.
	Denote the curve with the next smallest index after $\ell$ by $C_i^j$.
	From here, the same argument provides that $C_i^j$ is $C_{\ell+1}^1$ in both cases (if its double point is $p_d$, it must contain $p_{S_1}$ and $p_{S_2}$; if its double point is $p_s$, it must contain $p_d$ and $p_{L_1}$).
	In both cases, there are no further curves because they would intersect $C_{\ell+1}^1$ too many times. This gives the second graph of Figure~\ref{fig:B2}.
	
	Lastly, if every blowup happens next to the leaf, we get the first graph of Figure~\ref{fig:B2}.
\end{proof}

\begin{figure}[h!]
	\includegraphics{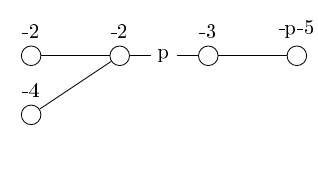}
	%    \centering\vspace{1cm}\hspace{-1cm}
	%\begin{tikzpicture}
	%\tikz \graph [ edge quotes={fill=white},nodes={draw, circle,empty nodes}, grow right=1.5cm,]% branch down=1.5cm]
	%{
		%{a["-2"], b["-4"]} --d["-2"] --["p"] e["-3"]--f["-p-5"]
		%};
	%\end{tikzpicture}
	\vspace{-1cm}
	% \centering
	% \includegraphics[width=0.7\linewidth]{screenshot001}
	\caption{The graph for the $B_4$ case, $p\geq0$.}
	\label{fig:B4}
\end{figure}

\begin{theorem}
	In $\mathcal B_4$, only the graph of Figure~\ref{fig:B4} can admit a \QHD smoothing.
\end{theorem}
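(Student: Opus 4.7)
The plan is to mimic the $\mathcal B_2$ proof, exploiting the stricter point/curve counts available in $\mathcal B_4$. The sandwich presentation has the blowdown ending at one of the two short-arm leaves, producing $\max\{a-2,0\}$ many $S_i$ and $\max\{b-3,0\}$ many $L_i$ dictated by the specific short-arm framings in $\mathcal B_4$, together with the cusp curvettas $C_i^j$ on the long arm. The $\ell=n$ configuration obviously realizes the graph of Figure \ref{fig:B4}, so the content of the theorem is to rule out $\ell<n$.

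Assume $\ell<n$. A short induction on the blowup sequence defining the $\mathcal B$ class (identical in spirit to the induction used in $\mathcal C_6$, $\mathcal C_3$, and $\mathcal C_2$) gives the exact number of $C_n^\alpha$ curves. By Corollary \ref{Cnlemma} the $X(C_n^\alpha)$ are pairwise distinct and all lie on $C_\ell^1$, and since $|C_\ell^1|=5+\ell$, the curve $C_\ell^1$ has (besides these) its own double point $p_d$ and a controlled number of additional simple points (at most one extra simple point $p_s$ in the most delicate subcase). The intersection numbers $(L_i,C_\ell^1)=2$ and $(S_i,C_\ell^1)=3$ from Table \ref{intersect} then force each $L_i$ through $p_d$, and each $S_i$ through both $p_d$ and $p_s$, leaving them a unique further point $p_{L_i}$ or $p_{S_i}$.

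Any additional cusp curvetta $C_i^j$ with $\ell\leq i<n$ must, by Corollary \ref{doublelemma}, place its double point at either $p_d$ or $p_s$, and an argument identical to the Claim inside the $\mathcal C_2$ proof shows that restricting to the points not of the form $X(C_n^\alpha)$ reduces the remaining combinatorial data to a combinatorial smoothing of a smaller linear subgraph. This yields the bound $k\leq\tilde k$, which combined with the point-curve balance $|\mathcal C|=|V|+\delta$ as in equation (\ref{eq:C2}), and the specific smooth-curvetta counts in $\mathcal B_4$, contradicts the existence of a valid configuration for every $\ell<n$.

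The principal obstacle is simply the case analysis for the location $X(C_i^j)$ of the next cusp after $C_\ell^1$: for each choice ($p_d$ or $p_s$) one has to re-run the propagation argument and then verify that every arithmetic identity available in the $\mathcal B_2$ proof now produces a strict inequality rather than an equality. This is mechanical but must be carried out for each subcase, and is where the difference between $\mathcal B_2$ (two surviving configurations) and $\mathcal B_4$ (a single one) materializes.
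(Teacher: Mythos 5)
There is a gap, and it stems from not actually carrying out the induction you invoke. In $\mathcal B_4$ the only smooth curvetta is $L_1$ (there are no $S_i$), and the induction on the blowup sequence gives $4+\ell$ many $C_n^\alpha$ curves when $\ell<n$. By Corollary~\ref{Cnlemma} these have pairwise distinct double points, and by Corollary~\ref{doublelemma} the curve $C_\ell^1$ must pass through every one of them; together with its own double point (counted with multiplicity $2$) this already forces $C_\ell^1$ to carry $4+\ell+2=6+\ell$ points with multiplicity, while Table~\ref{points} allows it only $5+\ell$. That is the entire proof in the paper: an immediate one-line contradiction, with no case analysis, no $p_s$, and no linear-graph reduction.

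Your proposal instead posits that $C_\ell^1$ has, besides the $X(C_n^\alpha)$, its double point $p_d$ and possibly a further simple point $p_s$, and then plans to run the $\mathcal C_2$-style propagation and $k\le\tilde k$ argument on that configuration. But that configuration cannot exist even before any further cusps are introduced --- the point budget of $C_\ell^1$ is already exceeded --- so the object you propose to analyze is vacuous, and the analysis you defer to ``mechanical'' subcases is never actually needed (nor would it be well-posed as described, since the $S_i$ you route through $p_d$ and $p_s$ are absent in $\mathcal B_4$). The missing step is precisely the count of the $C_n^\alpha$ curves; once you compute that there are $4+\ell$ of them, the theorem follows at once and the rest of your machinery is superfluous.
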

\begin{proof}
	In this case, we have $L_1$ and the cusps. Suppose not every blowup happened next to the leaf. Then a simple induction shows that there are $4+ \ell$ many $C_n^\alpha$ curves.
	This means $C_\ell^1$ goes through $6+ \ell$ points with multiplicity, an immediate contradiction.
\end{proof}

\subsubsection{Graphs in $\mathcal A$}
\begin{figure}[h!]
	\includegraphics{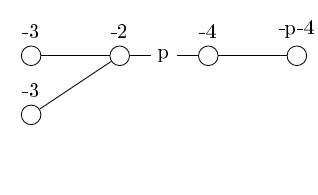}
	%    \centering\vspace{2cm}\hspace{-1cm}
	%\begin{tikzpicture}
	%\tikz \graph [ edge quotes={fill=white},nodes={draw, circle,empty nodes}, grow right=1.5cm,]% branch down=1.5cm]
	%{
		%{a["-3"], b["-3"]} --d["-2"] --["p"]dv["-4"]--e["-p-4"]
		%};
	%\end{tikzpicture}
	\vspace{-1cm}
	% \centering
	% \includegraphics[width=0.7\linewidth]{screenshot001}
	\caption{The graphs for the $A_3$ case, $p\geq0$.}
	\label{fig:A3}
\end{figure}
\begin{theorem}
	In $\mathcal A_3$, only the graph of Figure~\ref{fig:A3} can admit a \QHD smoothing.
\end{theorem}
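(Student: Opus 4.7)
The plan is to mirror the $\mathcal C_3$ and $\mathcal B_4$ arguments. For $\mathcal A_3$ both short arms carry framing $-3$, so $a = b = 3$, and the sandwich presentation contains exactly one smooth curve $S_1$ on the $-a$ vertex, no $L_i$ on the $-b$ vertex, and the cusps $C_i^j$ on the long arm. The expected case is $\ell = n$, where every blowup occurs adjacent to the leaf; this produces the family of graphs in Figure~\ref{fig:A3}. The work is therefore to exclude $\ell < n$.

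Assuming $\ell < n$, I would begin with a simple induction on the blowup procedure defining $\mathcal A$, entirely analogous to those used for $\mathcal C_3$ and $\mathcal B_4$, that identifies exactly $3 + \ell$ cusps $C_n^\alpha$ on the terminal vertex of the long arm. Corollary~\ref{doublelemma} then forces $C_\ell^1$ to pass simply through each of the $3 + \ell$ points $X(C_n^\alpha)$, and by definition it carries its own double point $p_d := X(C_\ell^1)$ with multiplicity $2$; these contributions already sum to $(3 + \ell) + 2 = 5 + \ell = |C_\ell^1|$, so $C_\ell^1$ admits no further point.

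The contradiction now comes from the smooth curve $S_1$. Table~\ref{intersect} records $(S_1, C_\ell^1) = 3$, while Corollary~\ref{Cnlemma} forbids $S_1$ from meeting any $X(C_n^\alpha)$. Since the simple points of $C_\ell^1$ are precisely these, one must have $S_1 \cap C_\ell^1 \subseteq \{p_d\}$, and the only possible contribution to the intersection is $1 \cdot 2 = 2$, contradicting $(S_1, C_\ell^1) = 3$.

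The main obstacle I foresee is justifying the inductive count $|\{C_n^\alpha\}| = 3 + \ell$: it is only sketched in the prior $\mathcal C$ and $\mathcal B$ cases and must be verified by tracing the $\mathcal A$ blowup procedure starting from Figure~\ref{fig:ABC} with $(a, b, c) = (3, 3, 3)$. Once this count is in place, the remaining argument is a clean combinatorial check in the style of the previous cases.
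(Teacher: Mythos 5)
Your proposal is correct and follows the paper's proof essentially verbatim: count $3+\ell$ end curves by induction, conclude via Corollary~\ref{doublelemma} that $C_\ell^1$ consists of the $X(C_n^\alpha)$ plus its own double point and nothing else, then use Corollary~\ref{Cnlemma} to cap $(S_1,C_\ell^1)$ at $2<3$. The inductive count you flag as the main obstacle is also treated as a routine induction in the paper, so no gap remains.
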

\begin{proof}
	In this case, there are $S_1$ and the cusps. Suppose not every blowup happened next to the leaf.
	There will be $3+ \ell$ curves at the last vertex by induction.
	$C_\ell^1$ will go through the double points of the $C_n^\alpha$ curves, and it has its double point and no further points.
	By Corollary~\ref{Cnlemma}, $S_1$ can only intersect $C_\ell^1$ at $p_d$, so at most 2 times instead of the required 3, an immediate contradiction. Thus, we are left with the case when every blowup happens next to the leaf, depicted in Figure~\ref{fig:A3}.
\end{proof}

\subsection{The degree 4 case}
Most of the setup stays unchanged in this case; there will still be $4+n$ many curves, the end curves have to go through all points, etc.
In addition to the $L_i, S_i$, there will also be some
\textit{exceptional cusps} $\Gamma_i$.
They intersect the $L_i,S_i$ in the same manner as the $C_i^j$ curves do, and intersect the $C_i^j$ curves 6 times.
\begin{lemma}\label{gammalemma}
	$X(C_n^\alpha)\not\in\Gamma_i$ for any $i$.
\end{lemma}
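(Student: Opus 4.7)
The plan is to mimic the proof of Corollary~\ref{Cnlemma}, but replacing the smoothness of $L_i, S_i$ with the cusp structure of $\Gamma_i$. Specifically, I would assume for contradiction that $X(C_n^\alpha) \in \Gamma_i$ for some indices $\alpha, i$, and compute the intersection number $(\Gamma_i, C_n^\alpha)$, which by hypothesis equals $6$. As in Corollary~\ref{Cnlemma}, each $C_n^\alpha$ must contain every point of the combinatorial smoothing (it carries $5+n$ points with multiplicity in a configuration with only $4+n$ points), so $\Gamma_i \subseteq C_n^\alpha$ as finite sets, and
\[
(\Gamma_i, C_n^\alpha) \;=\; \sum_{p \in \Gamma_i} m_p(\Gamma_i)\, m_p(C_n^\alpha),
\]
where $m_p$ equals $2$ at the relevant cusp's distinguished double point and $1$ elsewhere.

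Under the assumption $X(C_n^\alpha) \in \Gamma_i$ there are two subcases. If $X(\Gamma_i) = X(C_n^\alpha)$, the common double point contributes $4$ and the remaining $|\Gamma_i|-1$ simple points contribute $1$ each, giving $(\Gamma_i, C_n^\alpha) = |\Gamma_i| + 3$. If $X(\Gamma_i) \neq X(C_n^\alpha)$, each of $X(\Gamma_i)$ and $X(C_n^\alpha)$ contributes $2$, while the remaining $|\Gamma_i|-2$ simple points contribute $1$ each, giving $(\Gamma_i, C_n^\alpha) = |\Gamma_i| + 2$. Equality with $6$ in either case forces $|\Gamma_i| \leq 4$.

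The final step is to read off the value of $|\Gamma_i|$ from the sandwich construction for $\ABCf$ and observe that it exceeds $4$; mirroring the formula $|C_i^j| = 5 + i$ with the exceptional cusps playing the role of ``level $0$'' curvettas (they intersect every $C_i^j$ in the minimal number $6 = 6+0$), the expected value is $|\Gamma_i| = 5$, which contradicts the bound above. The main obstacle is pinning down $|\Gamma_i|$ from the degree~$4$ sandwich construction; once this is settled, everything reduces to the routine intersection-number calculation sketched above, in the spirit of Corollary~\ref{Cnlemma}.
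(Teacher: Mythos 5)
Your argument is essentially the paper's own proof, which is the one-line observation that if $X(C_n^\alpha)\in\Gamma_i$ then, since $C_n^\alpha$ passes through every point of the configuration, $(\Gamma_i,C_n^\alpha)\geq 7>6$; your two-subcase computation is the same contradiction written out in more detail. The input you flag as the ``main obstacle'' --- that $\Gamma_i$ carries $6$ points counted with multiplicity, i.e.\ five distinct points one of which is $X(\Gamma_i)$ --- is determined by the sandwich presentation exactly as the $5+i$ count for $C_i^j$ in Table~\ref{points} is, and it is just as implicit in the paper's version, so your proposal has no gap beyond what the paper itself leaves tacit.
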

\begin{proof}
	If this were not the case, they would intersect $7$ times.
\end{proof}
In the following, we will assume that not all blowups happen next to the leaf. As in the previous cases, a simple calculation shows, that $C_\ell^1$ has three points (besides the double points of the $C_n^\alpha$ curves of course): its double point denoted $p_d$, and two simple points $p_{s_1}$ and $p_{s_2}$.
We fix this notation in this section.
\begin{lemma}\label{2-3lemma}
	Every curve after $C_\ell^1$ can have its double point  at a maximum of two of the three points $p_d,p_{s_1},p_{s_2}$, i.e. $$\bigcup_{j>1} X(C_\ell^j)\cup\bigcup_{n>i>\ell,\ j>0} X(C_i^j)\subsetneq \{p_d,p_{s_1},p_{s_2}\}$$
\end{lemma}
\begin{proof}
	There are two cases to consider. The first curves have their double points at $p_d,p_{s_1}$ or $p_{s_1},p_{s_2}$ and both are similar.
	If there is a curve with double point at $p_d$ besides $C_\ell^1$, it has to miss one of the $p_{s_i}$, say $p_{s_2}$. This means no curve can have its double point at $p_{s_2}$ by Corollary \ref{doublelemma}.
	
	Symmetrically, if every subsequent curve has its double point at the $p_{s_i}$, no curve can have its double point at $p_d$ since it would have to miss one of the $p_{s_i}$ by Lemma \ref{cusplemma}.
\end{proof}
We state the following corollary of the proof for emphasis:
\begin{corollary}
	If $X(C_i^j)=p_d$, then $X(C_a^b)=p_d$ for any $a<i$.
\end{corollary}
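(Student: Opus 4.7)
The plan is to deduce this corollary by combining the information produced in the proof of Lemma~\ref{2-3lemma} with two applications of Lemma~\ref{cusplemma}. The statement is vacuous (or trivial) when $i=\ell$, so I would assume $i>\ell$ throughout and argue by contradiction.

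First I would extract from the proof of Lemma~\ref{2-3lemma} the concrete fact that $C_i^j$ must miss exactly one of $p_{s_1},p_{s_2}$. Indeed, applying Lemma~\ref{cusplemma} to $C_\ell^1$ and $C_i^j$ in the equal-double-point case gives a unique $p\in C_\ell^1\setminus C_i^j$. Since $p_d=X(C_i^j)\in C_i^j$ and each $X(C_n^\alpha)\in C_i^j$ by Corollary~\ref{doublelemma} (together with Corollary~\ref{Cnlemma} identifying $C_\ell^1\setminus\{p_d,p_{s_1},p_{s_2}\}$ with the set of double points of the $C_n^\alpha$'s), the missed point must lie in $\{p_{s_1},p_{s_2}\}$. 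Without loss of generality I take the missed point to be $p_{s_2}$.

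Now pick any $a<i$ and suppose toward contradiction that $X(C_a^b)\neq p_d$. The case $a=\ell$ is immediate since $X(C_\ell^1)=p_d$, so I may assume $\ell<a<i$. Applying Lemma~\ref{cusplemma} to the pair $C_\ell^1,C_a^b$, whose double points differ, yields $C_\ell^1\subset C_a^b$; in particular $p_{s_2}\in C_a^b$. Applying Lemma~\ref{cusplemma} again to $C_a^b,C_i^j$, whose double points also differ (since $X(C_i^j)=p_d\neq X(C_a^b)$), yields $C_a^b\subset C_i^j$, whence $p_{s_2}\in C_i^j$. This contradicts the fact established in the previous paragraph that $C_i^j$ misses $p_{s_2}$.

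Therefore $X(C_a^b)=p_d$ for every $a<i$, proving the corollary. The only step that required any real choice was the normalization of which of $p_{s_1},p_{s_2}$ is missed; the argument is otherwise a direct two-step containment chain, so I do not anticipate a serious obstacle — the work was already absorbed into the earlier proof, which is precisely why the authors record this as a corollary "of the proof for emphasis."
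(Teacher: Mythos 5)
Your argument is correct and is essentially the paper's own proof spelled out in detail: the paper's one-line justification (``such curves have to miss one of the $p_{s_i}$'s, and so a point of any curve with double point at any of these two points'') is exactly your containment chain $C_\ell^1\subset C_a^b\subset C_i^j$ forcing the missed $p_{s_i}$ into $C_i^j$. No substantive difference in route or in the lemmas invoked.
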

\begin{proof}
	Such curves have to miss one of the $p_{s_i}$'s, and so a point of any curve with double point at any of these two points, which is a contradiction.
\end{proof}
\begin{lemma}\label{switchlemma}
	If every curve $B$ after $C_\ell^1$ until $C_i^j$ has $X(B)=p_d$ and the next curve $C$ satisfies $X(C)=p_{s_1}$, then all subsequent curves have their double points at $p_{s_1}$.
	The same statement is true if we change the order of $p_d,p_{s_1}$ or replace $p_d,p_{s_1}$ with $p_{s_1},p_{s_2}$.
\end{lemma}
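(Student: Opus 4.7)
The plan is to argue each variant by contradiction, invoking Lemma~\ref{2-3lemma} to restrict the possible double points of any subsequent curve, and then ruling out the remaining possibility using either the corollary preceding this lemma (for the variants involving $p_d$) or Lemma~\ref{cusplemma} directly (for the variant that does not).

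For the main statement, assume there is a curve $D$ strictly later than $C$ with $X(D) \neq p_{s_1}$. Lemma~\ref{2-3lemma} confines the doubles of curves after $C_\ell^1$ to at most two of $\{p_d, p_{s_1}, p_{s_2}\}$; since $p_d$ is realised by each $B$ and $p_{s_1}$ by $C$, the third point $p_{s_2}$ is forbidden, so $X(D) = p_d$. But then the corollary following Lemma~\ref{2-3lemma}, applied with $D$ in the role of $C_i^j$, forces every curve of smaller index (in particular $C$) to carry double point $p_d$, contradicting $X(C) = p_{s_1}$. The order-swap variant, in which the $B$'s have $X = p_{s_1}$ and $C$ has $X = p_d$, is in fact vacuous for the same reason: the corollary applied to $C$ would force each earlier $B$ to carry $X = p_d$.

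For the remaining variant, where the $B$'s carry $X = p_{s_1}$ and $C$ carries $X = p_{s_2}$, the corollary is not directly available since it is asymmetric in $p_d$, so I would run the argument through Lemma~\ref{cusplemma} instead. Suppose for contradiction that some $D$ with index larger than that of $C$ satisfies $X(D) = p_{s_1}$. Pick any $B$: its index is smaller than that of $C$ and $X(B) = p_{s_1} \neq p_{s_2} = X(C)$, so Lemma~\ref{cusplemma} yields $B \subset C$; likewise $X(C) \neq X(D)$ together with $C$ having smaller index than $D$ gives $C \subset D$, hence $B \subset D$. But $X(B) = X(D) = p_{s_1}$ and $B$ has smaller index than $D$, so the same-double clause of Lemma~\ref{cusplemma} forces $|B \setminus D| = 1$, contradicting $B \subset D$.

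The step I expect to be the main hurdle is precisely this last, asymmetric variant: one has to notice that the chain $B \subset C \subset D$, obtained by two applications of the different-double case of Lemma~\ref{cusplemma}, collides with the $|B\setminus D|=1$ constraint from the same-double case. Once that chain is identified the contradiction is immediate, but the bookkeeping of which curves play the roles $C_i^j$ and $C_a^b$ in each invocation of cusplemma is the only delicate point; the first two variants then follow essentially for free.
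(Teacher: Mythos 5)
Your proof is correct and follows essentially the same route as the paper: the third variant is verbatim the paper's chain $C_i^j\subset C\subset C'$ colliding with the same-double-point clause of Lemma~\ref{cusplemma}, and your reading of the order-swapped variant as vacuous matches the paper's derivation of a contradiction from its hypothesis. The only cosmetic difference is that in the first variant you discharge the residual case $X(D)=p_d$ by citing the corollary of Lemma~\ref{2-3lemma}, where the paper reruns the containment argument directly ($C'\supset C\supset C_\ell^1$ contradicts $X(C')=X(C_\ell^1)$); both rest on the same facts, though in the third variant you should state explicitly (as your opening plan implies) that Lemma~\ref{2-3lemma} is what excludes $X(D)=p_d$ there.
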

\begin{proof}
	Let us denote a curve after $C$ by $C'$ with $X(C)\neq X(C')$.
	If $X(C_i^j)=p_d$, then by assumption we know, that $p_{s_1}\in C_i^j$ and $p_{s_2}\not\in C_i^j$; thus we only have to rule out $X(C')=p_d$.
	This is automatic by Lemma \ref{cusplemma}: since $p_d,p_{s_1},p_{s_2}\in C$, the curve $C'$ will also contain these points, and thus all points of $C_\ell^1$. This is a contradiction, since $X(C_\ell^1)=X(C')$.
	
	The case where $X(C_i^j)=p_{s_1}$ is simpler; if $X(C)=p_d$, then $p_{s_2}\not\in C$, a contradiction since $p_{s_2}\in C_i^j$.
	
	If $X(C)=p_{s_2}$, then $X(C')\neq p_d$, since it would miss a point of $C_i^j,C$.
	Since $X(C)\neq X(C')=X(C_i^j)$ we get that $C_i^j\subset C\subset C'$, a contradiction.
\end{proof}
The strategy of the proof follows the $\mathcal C_2,\mathcal B_2$ cases: we analyze the configuration of the existing $L_i,S_i,\Gamma_i$ curves in relation to $C_\ell^1$, then we take every other cusp, which is not a $C_n^\alpha$, and use them construct a solution to the combinatorial smoothing problem of a linear graph, i.e. a sequence of sets $A_i$ satisfying Definition \ref{def:smoothsmoothing}. 

This is done by observing, that if some cusps $C_i$ have $X(C_i)=X(C_{i'})$, then by "forgetting" the multiplicity of the double point the definition is already satisfied. Furthermore if there are larger (in the sense, that they correspond to vertices of the graph farther from the node) curves $D_i$ with $X(D_i)=X(D_{i'})\neq X(C_i)$ if we remove $X(C_i)$ from $D_i$ the definition is satisfied.
Furthermore we  delete points, which are contained in every curve, and identify points which we know must be either contained or missed together.

\begin{figure}[h]
	\includegraphics{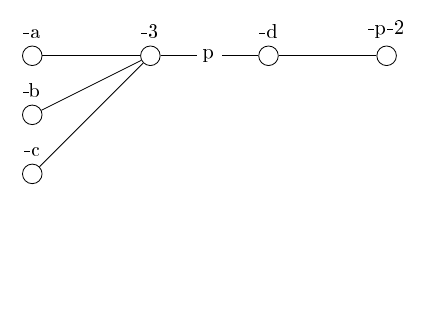}
	%    \centering
	%    %\includegraphics[width=\linewidth]{4star.png}
	%    \vspace{2cm}
	%\begin{tikzpicture}
	%\tikz \graph [ edge quotes={fill=white},nodes={draw, circle,empty nodes}, grow right=2cm,]% branch down=1.5cm]
	%{
		%{a["-a"], b["-b"], c["-c"]} -- d["-3"] --["p"] e["-d"] --f["-p-2"]
		%};
	%\end{tikzpicture}
	\vspace{-2cm}
	\caption{The valency 4 graphs, which admit a \QHD smoothing, where $[a,b,c;d]\in\{[3,3,3;4], [2,4,4;3],[2,3,6;2]\}$ and $p\geq 0$.}
	\label{fig:4-star}
\end{figure}

\subsubsection{Graphs in $\mathcal C$}
\begin{theorem}
	In $\mathcal C^4$, only the graph of Figure~\ref{fig:4-star} can admit a \QHD smoothing.
\end{theorem}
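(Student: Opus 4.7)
My plan is to follow the template of the $\mathcal C_2$ and $\mathcal B_2$ arguments above, adapted to the degree-4 setting. I assume $\ell < n$ throughout; the goal is to derive a contradiction, leaving only the known case $\ell = n$, which produces the graph of Figure~\ref{fig:4-star}.

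First I would record the curve inventory of a sandwich presentation of a graph in $\mathcal C^4$: the $2+\ell$ end-cusps $C_n^\alpha$, the distinguished cusp $C_\ell^1$ with its designated double point $p_d$ and two further simple points $p_{s_1},p_{s_2}$, together with the auxiliary curves $L_i,S_i,\Gamma_i$ whose counts are fixed by the framings in class $\mathcal C$. Using the intersection table and Lemma~\ref{gammalemma}, each $\Gamma_j$ misses every $X(C_n^\alpha)$, so to realise $(\Gamma_j,C_\ell^1)=6$ it must contain $p_d$ together with both $p_{s_i}$, leaving exactly one additional free point $p_{\Gamma_j}$. A parallel analysis forces each $S_i$ through $p_d$ and one of the $p_{s_k}$, and each $L_i$ through $p_d$; each such curve carries a further free point of its own.

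Next I would invoke Lemma~\ref{2-3lemma} and Lemma~\ref{switchlemma} to pin down the double points of the cusps $C_i^j$ with $\ell<i<n$: only two of $\{p_d,p_{s_1},p_{s_2}\}$ are ever populated, and once we switch from $p_d$ to a $p_{s_k}$ (or between the two $p_{s_k}$'s) every subsequent cusp must have its double point at the new location. This leaves three subcase patterns: (a) every intermediate $X$ equals $p_d$; (b) a single switch $p_d\to p_{s_k}$; (c) all intermediate $X$'s lie in $\{p_{s_1},p_{s_2}\}$. In each subcase, setting $K := \mathcal C \setminus \{L_i,S_i,\Gamma_i,C_n^\alpha,C_\ell^1\}$ and passing to $\tilde K$ by discarding the points through which every member of $K$ is forced, the remaining incidence data become those of a combinatorial smoothing of a linear subgraph. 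The resulting inequality $|K|\leq|\tilde K|$ (exactly as in the Claim inside the $\mathcal C_2$ proof), combined with the global count $|\mathcal C|=4+n$, will force $|K|\leq 1$. A final intersection check on the unique candidate $C_{\ell+1}^1$ then rules out any further curve, and pushing the resulting equation on $\ell$ contradicts $\ell<n$.

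The main technical obstacle I expect is subcase (b): the switch from $p_d$ to $p_{s_k}$ breaks the symmetry between $p_d$ and the $p_{s_i}$, and the auxiliary curves $L_i,S_i,\Gamma_i$ contribute different forced-point counts on the two sides, so verifying that the linear-graph reduction still yields the sharp $|K|\leq|\tilde K|$ requires some care. Once this is in hand, each of (a), (b), (c) produces a count incompatible with $4+n$ unless $\ell=n$, and the theorem follows.
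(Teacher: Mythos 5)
Your template is the right one---the paper's proof of the $\mathcal C^4$ case is indeed a linear-graph reduction plus a curve/point count in the style of $\mathcal C_2$---but the proposal has genuine gaps in exactly the places where the degree-4 argument differs from the degree-3 ones. First, the inventory is off: in $\mathcal C^4$ there are no $L_i$ or $S_i$ at all, only five exceptional cusps $\Gamma_1,\dots,\Gamma_5$, and there are $1+\ell$ (not $2+\ell$) end curves $C_n^\alpha$; your ``parallel analysis'' of the $S_i$ and $L_i$ concerns curves that are not present, and the wrong end-curve count would derail the final equation. Second, and more seriously, the structural analysis of the $\Gamma_i$ is missing and is not a routine transcription of the $S_i$ analysis: one must show that all five $\Gamma_i$ have their double point at $p_d$, contain both $p_{s_i}$, pass through one further \emph{common} point $p_\Gamma$, and each carries one \emph{private} point $p_i$; and then that every intermediate cusp $C_i^j$ contains either $p_\Gamma$ or all five $p_i$ but not both, with its own switch lemma governing when the choice can change. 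Without this dichotomy the reduction to a linear graph cannot be set up, because the five private points would contribute an uncontrolled amount to the point count.

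Third, the reduction itself is not just ``discard the points through which every member of $K$ is forced.'' The paper's construction removes $X=C_\ell^1\setminus\{A\}$ (where $A$ is the double point of the first curve after $C_\ell^1$), deletes $A$ from the curves whose double point is elsewhere, \emph{identifies the five points $p_1,\dots,p_5$ into a single point $P$}, and \emph{adds a new two-point curve} $S_0=\{p_\Gamma,P\}$ at the start of the configuration. The resulting inequality is $k+1\leq\tilde k+3$ (the new configuration has $k+1$ curves and at most $\tilde k+3$ points), not $|K|\leq|\tilde K|$. Finally, your expected endgame (force $|K|\leq 1$, inspect a unique candidate $C_{\ell+1}^1$, and contradict $\ell<n$ via an equation in $\ell$) is modeled on $\mathcal C_2$, where the count $k=1+\tilde k$ leaves an actual surviving family; in $\mathcal C^4$ the global count $1+\ell+5+1+k=1+\ell+3+1+5+\tilde k$ gives $k=\tilde k+3$ outright, which already contradicts $k+1\leq\tilde k+3$, so no case-by-case elimination of a residual candidate is needed (and none would close the argument the way you describe). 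These are the substantive steps the proposal defers, and subcase (b) is not where the difficulty lies.
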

\begin{proof}
	We have five $\Gamma_i$ exceptional cusps and the $C_i^j$ on the fourth arm.
	If there is no $C_\ell^1$, we are done; otherwise, there are $1+\ell$ many $C_n^\alpha$ curves.
	Counting points, we see that $C_\ell^1$ has three points (besides the double points of the $C_n^\alpha$): its double point $p_d$, and two simple points $p_{s_i}$.
	$(\Gamma_i,C_\ell^1)=6$ can only happen, if every $\Gamma_i$ has its double point at $p_d$ and contains both $p_{s_i}$ by Lemma \ref{gammalemma}.
	
	A simple checking of cases shows that all of the $\Gamma_i$ have to go through a further shared point (denoted  $p_\Gamma$), and each must  have a final point not on any other $\Gamma_j$ (denoted $p_i$).
	Every $C_i^j$ curve which is not $C_\ell^1$ or $C_n^\alpha$ will have to contain either $p_\Gamma$ or the $p_i$ to intersect the $\Gamma_i$ 6 times (but not both).
	Note, that a "switch" lemma, similar to Lemma \ref{switchlemma}  applies to these two possibilities as well, if some curves contain, say $p_\Gamma$, and the next curve contains the $p_i$, then every subsequent curve has to contain the $p_i$ as well, since the curves $C_i^j$ have to contain the previous ones' points except for at most one.
	
	Every further curve $C_i^j$ with $i<n$ has to have its double point at $p_d$ or one of the $p_{s_i}$.
	By Lemmata \ref{2-3lemma}-\ref{switchlemma} we have to consider the cases where some curves after $C_\ell^1$ have double points at $p_d$ or $p_{s_1}$ and possibly some later curves have their double points at $p_{s_1}$ in the former, or $p_{s_2}$ in the latter case.

	We construct a new system of curves as follows: take the points of $X:=C_\ell^1\setminus\{A\}$, where $A$ denotes the double point of the first curve after $C_\ell^1$. Take every curve with index $(i,j)\neq(\ell,1)$ or $(n,\alpha)$. Our new curves will be $C_i^j\setminus X$ if the double point of this curve was at $A$, or $C_i^j\setminus\{X\cup \{A\}\}$ if its double point was not at $A$, with the modification that the $p_i$ are combined into a single point $P$, and finally with an added new curve $S_0=\{p_\Gamma,P\}$.
	These curves give a combinatorial solution to the smoothing problem of a linear graph, thus in particular it has at least as many points as curves. From this we deduce, that $k+1\leq \tilde k+3$, where $k$ denotes the number of curves in the original configuration besides $C_\ell^1,C_n^\alpha,\Gamma_i$, and $\tilde k$ denotes the number of points in the original configuration besides the points of $C_\ell^1,p_\Gamma,p_1,...,p_5$.

	Suppose there existed a \QHD smoothing. Then we would have 
	$$1+\ell+5+1+k=1+\ell+3+1+5+\tilde k\leftrightarrow k=\tilde k+3.$$
	Thus $\tilde k+4\leq\tilde k+3$, contradiction. 
\end{proof}
\subsubsection{Graphs in $\mathcal B$}
\begin{theorem}
	In $\mathcal B^4$, only the known case admits a \QHD smoothing.
\end{theorem}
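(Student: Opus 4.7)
The plan is to mirror the $\mathcal{C}^4$ argument almost verbatim, adapted to the curve bookkeeping of $\mathcal{B}^4$. First I would set up the sandwich presentation: for $\mathcal{B}^4$ the short arms are $[2,4,4;3]$, so in addition to the long-arm cusps $C_i^j$ and the exceptional cusps $\Gamma_i$ (now one fewer than in $\mathcal{C}^4$, since the framing changed from $-2$ to $-3$), there will be some $L_i,S_i$ curvettas coming from the $-4$ vertices, counted via Definition~\ref{def:sandwich presentation}. With those counts, the total number of curves will still be $4+n$, and an induction on the construction of the $\mathcal{B}$ family will give the number of $C_n^\alpha$ curves as a function of $\ell$.

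Next, I would dispose of the case where every blowup happens next to the leaf: that yields precisely the family of Figure~\ref{fig:4-star} with the corresponding $[a,b,c;d]=[2,4,4;3]$. So I assume $\ell<n$ and aim for a contradiction. As in the $\mathcal{C}^4$ proof, $C_\ell^1$ has exactly three points outside $\{X(C_n^\alpha)\}$: its double point $p_d$ and two simple points $p_{s_1},p_{s_2}$. Each $\Gamma_i$ must then, by Lemma~\ref{gammalemma} and the intersection count $(\Gamma_i,C_\ell^1)=6$, have its double point at $p_d$ and contain both $p_{s_i}$. I would then determine, by the same elementary case analysis, that all $\Gamma_i$ share a further common point $p_\Gamma$ and each has one private point $p_i$; meanwhile the $L_i$ and $S_i$ have to meet $C_\ell^1$ in exactly two or three of the points $\{p_d,p_{s_1},p_{s_2}\}$ respectively, contributing their own private points.

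The main step, and the one I expect to be the principal obstacle, is the counting argument. Every remaining cusp $C_i^j$ with $\ell<i<n$ has its double point constrained to lie among $p_d,p_{s_1},p_{s_2}$ by Lemma~\ref{2-3lemma}, and Lemma~\ref{switchlemma} forces any ``switch'' of double-point locations to be terminal. This lets me construct, exactly as in the $\mathcal{C}^4$ argument, an auxiliary combinatorial smoothing of a linear graph by forgetting the double-point multiplicity at the common location, collapsing the private $\Gamma_i$-points $p_i$ to a single point $P$, and adjoining a new curve $\{p_\Gamma,P\}$; this may require a parallel short curve built from the $L_i$ or $S_i$ private points, depending on which smooth curvettas survive. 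By Corollary~\ref{muzero} applied to the auxiliary linear graph, the number of curves is bounded by the number of points, giving an inequality of the form $k\leq \tilde k+c$ for an explicit constant $c$ computed from the number of $L_i,S_i,\Gamma_i$.

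Finally, I would compare this with the global count
\begin{equation*}
(\text{number of }C_n^\alpha)+(\#\Gamma_i)+(\#L_i)+(\#S_i)+1+k \;=\; 4+n,
\end{equation*}
in which the points are counted as $(\#\text{double points of }C_n^\alpha)+3+(\#p_\Gamma,p_i,p_{L_j},p_{S_j})+\tilde k$. Equating the two bookkeepings forces $k=\tilde k+c+1$, contradicting the linear-graph bound $k\leq \tilde k+c$ and ruling out the case $\ell<n$. The delicate point is verifying that the constant $c$ produced by the auxiliary construction really is one less than what the global count forces; this hinges on correctly accounting for the smooth curves $L_1,S_1,S_2$ (or whichever subset $\mathcal{B}^4$ actually provides) when assembling the short auxiliary curve, so most of the care in writing the proof will go into a clean, case-free tabulation of how each such smooth curve contributes to both sides of the inequality.
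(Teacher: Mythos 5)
Your overall strategy is the one the paper uses: dispose of the case where every blowup happens next to the leaf, locate the three extra points $p_d,p_{s_1},p_{s_2}$ of $C_\ell^1$, constrain the double points of the remaining cusps via Lemma~\ref{2-3lemma} and Lemma~\ref{switchlemma}, build an auxiliary combinatorial smoothing of a linear graph, and play the resulting inequality $k+1\le\tilde k+3$ against the global count $k=\tilde k+3$. Two details in your sketch do not survive contact with the actual $\mathcal B^4$ bookkeeping. First, the curve census is not ``one fewer $\Gamma_i$ than in $\mathcal C^4$'': the presentation has exactly one exceptional cusp $\Gamma_1$ together with $L_1,S_1,S_2$, so there is no family of private points $p_i$ to collapse into a point $P$ --- the adjoined auxiliary curve is simply $\{p_\Gamma,p_1\}$, the two extra points of the single $\Gamma_1$.

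Second, and more substantively, the positions of $L_1$ and the $S_i$ relative to $\{p_d,p_{s_1},p_{s_2}\}$ are genuinely not forced, and the ``case-free tabulation'' you are hoping for does not exist. One must separately eliminate the configuration $L_1\supset\{p_{s_1},p_{s_2}\}$ and the configuration where $S_1,S_2$ pass through different $p_{s_i}$ and meet at a new point $p_S$; in each of these the contradiction is not the $k$ versus $\tilde k$ count but the observation that no further cusp $C_i^j$ can exist at all (any candidate would intersect $L_1$, respectively $S_2$, the wrong number of times), which clashes with the structure of the $\mathcal B$ family (there are at least two long-arm cusps not on the end vertex) or leaves fewer curves than points. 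Only after these two configurations are excluded does one land in the situation ($L_1\ni p_d$ with a private point $p_L$, both $S_i\ni p_d,p_{s_1}$ with private points $p_2,p_3$) in which the auxiliary linear-graph construction with $X:=(C_\ell^1\cup L_1\cup S_1\cup S_2)\setminus\{A\}$ goes through. So a short preliminary case analysis on the smooth curves is required before the counting step, not merely careful bookkeeping inside it.
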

\begin{proof}
	We have $L_1,S_1,S_2,\Gamma_1 $ besides the $C_i^j$.
	If every blowup happens next to the leaf of the fourth arm, we are in the known case. 
	Otherwise there are $1+\ell$ many $C_n^\alpha$ curves, and $C_\ell^1$ has three points (which are not double points of the $C_n^\alpha$ curves), denoted $p_d,p_{s_1},p_{s_2}$ as before.
	Besides these three points, $\Gamma_1$ has two further points $p_\Gamma,p_1$.
	$L_1$ contains two points. These can be either $p_d$ and a new point $p_L$, or $p_{s_1},p_{s_2}$.
	In this latter case, there can be no further $C_i^j$, since  either $X(C_i^j)=p_d$ which forces $(C_i^j,L_1)=1$, or 
	$X(C_i^j)=p_{s_1}$, which implies $((C_i^j),L_1)=3$.
	This means that this case cannot occur since we would have fewer curves than points.
	The $S_i$ have to go through $p_d$ to intersect $C_\ell^1$ three times. From here, either they both go through $p_{s_1}$, and each has one extra point ($p_2$ and $p_3$ respectively), or they split the simple points ($p_{s_1}\in S_1$ and $p_{s_2} \in S_2$) and meet at a further point $p_S$.
	
	%\hfill
	In the latter case, no further $C_i^j$ can exist. This is because $X(C_i^j)=p_d$ would be forced, meaning $p_{s_1}\in C_i^j$ and $p_{s_2}\not\in C_i^j$; thus $(C_i^j,S_2)=2$, which is a contradiction. Thus this case cannot happen, since there are at least two curves on the long arm, which are not on the end by the construction of the $\mathcal B$ class.

	If both $S_i$ go through $p_{s_1}$ then by Lemma~\ref{switchlemma} the sequence of double points for the subsequent $C_i^j$ curves must follow one of the following patterns:
	\begin{enumerate}
		\item $p_{s_1}$ then $p_{s_2}$,
		\item $p_d$ then $p_{s_1}$,
		\item $p_d$ then $p_{s_2}$,
		\item entirely at $p_{s_2}$.
	\end{enumerate} In the first three patterns the "switch" may or may bot occur.
	
	%either all $C_i^j$ after $C_\ell^1$ have their double points first at $p_{s_1}$, then $p_{s_2}$, or at $p_d$, then $p_{s_1}$ or at $p_d$ then $p_{s_2}$ (with the "switch" possibly not happening) by Lemma \ref{switchlemma} or if $p_{s_2}$ is the double point of the first curve, then all curves will have their double points there.
	
	For all of these cases we derive a combinatorial smoothing for a linear graph using the same method as previously. Let $A$ denote $X(C_i^j)$ where $C_i^j$ is the next curve after $C_\ell^1$. Let $X:=(C_\ell^1\cup L_1\cup S_1\cup S_2)\setminus\{A\}$. If $X(C)=A$, then consider $C\setminus X$, otherwise we take a curve to be $C\setminus(X\cup\{A\})$.
	Finally add a curve $\{p_\Gamma,p_1\}$. The validity of this configuration shows that $k+1\leq \tilde k+3$.
	On the other hand we can calculate 
	$$1+\ell+5+k=1+\ell+8+\tilde k\leftrightarrow k=\tilde k+3,$$
	which is a contradiction.
	
\end{proof}
\subsubsection{Graphs in $\mathcal A$}
\begin{theorem}
	In $\mathcal A^4$, only the known case admits a \QHD smoothing.
\end{theorem}
\begin{proof}
	We have $\Gamma_1,\Gamma_2, S_1$ and the cusps.
	The known case is when every blowup happens towards the leaf.
	Otherwise,  $1+\ell$ is the number of $C_n^\alpha$ curves.
	Besides their double points $C_\ell^1$ has its double point and two simple points $p_d,p_{s_1},p_{s_2}$.
	$\Gamma_1$ and $\Gamma_2$ have their double points at $p_d$ to intersect $C_\ell^1$ enough times. They intersect at one additional point $p_\Gamma$, and each has a unique simple point each $p_1,p_2$.
	The curve $S_1$ contains $p_d, p_{s_1}$ and a further point $p_S$.
	
	%\hfill
	The proof is essentially the same as the previous two cases.
	The double points of subsequent curves are restricted to at most two of the three possible points $p_d,p_{s_1},p_{s_2}$. Let $A$ denote the double point of the first curve.
	We create a solution to the combinatorial smoothing problem of a linear graph by identifying the points $p_1,p_2$ into a single point $P$, taking $X=C_\ell^1\setminus\{A\}$ and considering the curves $C_i^j\setminus X$ if $X(C_i^j)=A$ and $C_i^j\setminus(X\cup\{A\})$ if $X(C_i^j)\neq A$, and finally adding $S_0=\{p_\Gamma,P\}$ to the beginning of the configuration.
	This means that $k+1\leq \tilde k+3$.
	Assuming the existence of a \QHD smoothing, we compute
	$$1+\ell+4+k=1+\ell+7+\tilde k\Leftrightarrow k=\tilde k+3.$$
\end{proof}

%\input{counterex}
%\section{Multiple nodes}
\begin{remark}
	The known families already ruled out in e.g. {\cite[Proposition 4.2]{gay2012symplectic}} can also be ruled out from having a \QHD smoothing using this method.
	Similarly to Section \ref{sect:examples} one can find many graphs with a fixed sandwiched presentation admitting a combinatorial \QHD smoothing.
	Unlike the minimal rational case, this is even possible inside the $\ABC$ families.
	
	More generally, there are graphs which are rational, and have more than one $-2$ node in the $\ABC$ families, which don't have a sandwich presentation using only smooth and cusp curves, and in fact there are rational graphs in $\ABC$, which are not sandwiched at all.
\end{remark}

\nocite{*}
\bibliographystyle{amsplain}
\bibliography{references}
\end{document}